\newtheorem{theorem}{Theorem}
\newtheorem{lemma}[theorem]{Lemma}
\newtheorem{corollary}[theorem]{Corollary}
\theoremstyle{definition}
\newtheorem{remark}[theorem]{Remark}
\def \kk {\overline{K}}
\def \bi {\mathbf{i}}
\def \co {\mathcal{O}}
\def \tors {\rm{tors}}
\DeclareMathOperator{\ord}{ord}
\DeclareMathOperator{\Supp}{supp}
\DeclareMathOperator{\opo}{o}
\DeclareMathOperator{\LCM}{LCM}
\DeclareMathOperator{\Cl}{Cl}
\DeclareMathOperator{\rk}{rk}
\DeclareMathOperator{\Jac}{Jac}
\DeclareMathOperator{\dv}{div}
\DeclareMathOperator{\pfree}{pfree}
\begin{document}
\bibliographystyle{amsplain}
\title{Variations on a theme of Runge:  effective determination of integral points on certain varieties}
\author{Aaron Levin}
\date{}
\maketitle
\begin{abstract}
We consider some variations on the classical method of Runge for effectively determining integral points on certain curves.  We first prove a version of Runge's theorem valid for higher-dimensional varieties, generalizing a uniform version of Runge's theorem due to Bombieri.  We then take up the study of how Runge's method may be expanded by taking advantage of certain coverings.  We prove both a result for arbitrary curves and a more explicit result for superelliptic curves.  As an application of our method, we completely solve certain equations involving squares in products of terms in an arithmetic progression.
\end{abstract}
\section{Introduction}
A fundamental problem in number theory is to determine the set of solutions over a number field $K$ (or its ring of integers $\co_K$) to a system of polynomial equations.  Equivalently, in more geometric terms, we are interested in determining the set of rational or integral points over $K$ on a variety $X$.  Despite the existence of powerful conjectures on this topic (e.g., Vojta's conjectures), in general, it can be said that very little is known for arbitrary varieties $X$.  When $X=C$ is a curve, however, the situation is much better, at least qualitatively.  For integral points, we have the classical theorem of Siegel which states that if an affine curve $C$ has infinitely many integral points, then $C$ must be rational and have at most two points at infinity.  When the genus of $C$ is at least two, Siegel's theorem is superseded by Faltings' celebrated result that such a curve has in fact only finitely many rational points over any number field.  Unfortunately, at present, both Siegel's and Faltings' theorems are ineffective.  That is, given a curve which is known to have only finitely many integral or rational points by Siegel's or Faltings' theorems, there is in general no known algorithm to provably find all of the finitely many integral or rational points on that curve.  

However, for certain classes of curves, over certain number fields $K$, there do exist effective techniques for finding all integral or rational points.  For instance, when the rank of the group of $K$-rational points in the Jacobian of $C$ is smaller than the genus of $C$, the Chabauty-Coleman method \cite{Col} frequently allows one to effectively determine $C(K)$.  For integral points, the most general effective techniques come from the theory of linear forms in logarithms \cite{Bak}.  Using this theory one can effectively determine, for example, the finitely many $S$-integral solutions to the superelliptic equation $y^n=f(x)$, where $n>1$, $f\in K[x]$ is $n$-th power free with at least three distinct roots, and $S$ is some finite set of places of $K$ containing the archimedean places.  There are, essentially, only a handful of such effective techniques, and so it is useful to expand the domain of applicability of any given method.  From this point of view, we will study the old method of Runge for effectively determining integral points on certain curves.

In 1887 Runge \cite{Run} proved the finiteness of the set of integral points on certain curves.  Although Runge did not state it, it is implicit in his proof that his method is effective.  In its most basic form, Runge proved:
\begin{theorem}[Runge]
\label{Run1}
Let $f\in \mathbb{Q}[x,y]$ be an absolutely irreducible polynomial of total degree $n$.  Let $f_0$ denote the leading form of $f$, i.e., the sum of the terms of total degree $n$ in $f$.  Suppose that $f_0$ factors as $f_0=g_0h_0$, where $g_0,h_0\in \mathbb{Q}[x,y]$ are nonconstant relatively prime polynomials.  Then the set of solutions to 
\begin{equation*}
f(x,y)=0, \quad x,y\in\mathbb{Z},
\end{equation*}
is finite and can be effectively determined.
\end{theorem}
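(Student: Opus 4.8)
The plan is to develop $C:f=0$ near its points at infinity, use the factorization $f_0=g_0h_0$ to split those points into two nonempty groups, and show that an integral point of large height would have to approach one group while staying far from the other at \emph{every} place of $\mathbb{Q}$ --- which is impossible.

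After a suitable $GL_2(\mathbb{Z})$ change of variables (harmless, since $GL_2(\mathbb{Z})$ permutes $\mathbb{Z}^2$) I may assume $f_0$ has nonzero $x^n$- and $y^n$-coefficients, so that $f$ has degree $n$ in $y$ with nonzero constant leading coefficient $c$ and no root of $f_0(1,y)$ vanishes. Viewing $f/c$ as a polynomial in $y$ over the field of Puiseux series in $1/x$, write $f=c\prod_{j=1}^n(y-y_j(x))$ with $y_j(x)=\alpha_jx+(\text{lower-order Puiseux terms})$ convergent for $|x|>R$, where $R$ is effectively bounded in terms of the coefficients of $f$. Since $g_0,h_0\in\mathbb{Q}[x,y]$ are coprime and nonconstant, the roots of $f_0(1,y)$ --- hence the index set $\{1,\dots,n\}$ --- partition as $A\sqcup B$ with $A,B\neq\varnothing$, each stable under $\mathrm{Gal}(\overline{\mathbb{Q}}/\mathbb{Q})$ and under the monodromy $x\mapsto e^{2\pi i}x$ (coprimality forces each repeated factor of $f_0$, with all its branches, into a single side). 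Thus $G(x,y)=\prod_{j\in A}(y-y_j(x))$ and $H(x,y)=\prod_{j\in B}(y-y_j(x))$ have coefficients in $\mathbb{Q}((1/x))$, are monic in $y$ of degrees $|A|,|B|\ge1$, converge for $|x|>R$, have leading forms $g_0,h_0$, and satisfy $f=c\,GH$; this is a Hensel/Newton lift of $f_0(1,\cdot)=g_0(1,\cdot)h_0(1,\cdot)$ with effective control of convergence.

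Now let $(x_0,y_0)\in\mathbb{Z}^2$ lie on $C$ with $|x_0|$ large. For $|x_0|>R'$ (effective) the $y_j(x_0)$ are distinct and $|y_0|\asymp|x_0|$, so $y_0=y_{j_0}(x_0)$ for a unique $j_0$, and exactly one of $G(x_0,y_0),H(x_0,y_0)$ vanishes; say $j_0\in A$, so $G(x_0,y_0)=0$. Then $(x_0,y_0)$ is far from the part $B$ at every place: archimedean, because it lies near the $A$-branch $y_{j_0}$ and $A\cap B=\varnothing$; non-archimedean, because $(x_0,y_0)$ is integral while the locus cut out by $B$ lies at infinity. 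Converting ``far from $B$ at all places'' into a height bound --- concretely, by applying the product formula to the value at $(x_0,y_0)$ of a rational function on the normalization $\widetilde C$ whose polar divisor is supported exactly over $B$ (such a function exists by Riemann--Roch and is built effectively from $x$ and $g_0$, using that $B$ cuts out a proper, nonempty, $\mathbb{Q}$-rational subdivisor at infinity) --- bounds the height of $(x_0,y_0)$, hence $|x_0|,|y_0|$, by an effective constant. The case $j_0\in B$ is symmetric, and adjoining the finitely many points of bounded height finishes the proof. (Alternatively, for maximal explicitness: truncating the coefficients of $G$ gives $\widetilde G\in\mathbb{Q}[x,1/x,y]$ agreeing with $G$ to high order near infinity; $G(x_0,y_0)=0$ together with integrality and $\ell$-adic separation from $B$ for every $\ell$ pins $\widetilde G(x_0,y_0)$ to a fixed finite set of rationals, and since $\deg_y\widetilde G=|A|<\deg_y f$ --- here $|B|\ge1$ is used --- eliminating $y$ between $f$ and any level curve of $\widetilde G$ yields a nonzero polynomial in $x$ of effectively bounded degree and height whose roots bound $x_0$.)

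The hard part is the passage from ``far from $B$ at every place'' to a height bound: one must verify that the archimedean smallness forced by $G(x_0,y_0)=0$ genuinely outweighs \emph{all} denominators --- both the fixed denominators of $G$ and the powers of $x_0$ incurred in clearing them --- and the only reason this works is the $\ell$-adic separation from $B$ for every prime $\ell$, which in turn rests on $g_0,h_0$ being coprime with \emph{both} nonconstant, so that $B\neq\varnothing$. Carrying the constants ($R$, $R'$, the truncation length, and the degree and height of the final resultant) through the argument makes the whole thing effective, as is implicit in Runge's original treatment.
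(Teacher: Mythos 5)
Your main argument is correct in outline, and it is essentially the classical direct proof of Runge's theorem: normalize so that $x\nmid f_0$, $y\nmid f_0$, expand the $n$ branches at infinity in Puiseux series, split them into the two nonempty Galois- and monodromy-stable groups $A$, $B$ dictated by the coprime factorization $f_0=g_0h_0$, and play an auxiliary function with polar support over $B$ against an integral point that archimedeanly approaches an $A$-branch, getting a height bound from bounds at all places. This is a genuinely different route from the paper, which does not argue directly at all: it deduces Theorem \ref{Run1} by specializing the geometric formulation (Theorem \ref{Run2}, subsumed by Bombieri's Theorem \ref{Bomb} and by Theorem \ref{gR}), taking the normalization $\pi:C'\to C$ of a projective closure, $\phi=x\circ\pi$, $K=\mathbb{Q}$, $S=\{\infty\}$, and observing that the factorization hypothesis forces $(\phi)_\infty$ to have at least two $\mathbb{Q}$-components, so $r>|S|$; the engine there is the Nullstellensatz-based Lemma \ref{W} plus a pigeonhole over places rather than Puiseux expansions. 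The paper's route is a two-line reduction and comes with uniformity in $(L,S_L)$ and a higher-dimensional statement for free; your route is self-contained and is the one that produces explicit bounds (it is close in spirit to \cite{Hil}, \cite{Wal}, and to the paper's own treatment of superelliptic curves via Lemma \ref{mainl}).

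Two caveats. First, the parenthetical ``truncate $G$'' variant does not work as literally stated once $|A|\geq 2$: writing $G=y^{|A|}+\sum_{i<|A|}g_i(x)y^i$ and truncating the $g_i$ at order $x^{-T}$, the value $\widetilde G(x_0,y_0)$ is only $O\bigl(|x_0|^{|A|-2-T}\bigr)$ while clearing denominators costs a factor $x_0^{T}$, so the resulting integer is not forced to vanish; the classical repair (and what the paper itself does in Lemma \ref{mainl}) is to run the construction over a whole space of monomials $x^{i_0}y^{i_1}\cdots$, i.e.\ a Riemann--Roch space with prescribed vanishing, rather than over the single factor $G$. Second, in the route you actually use, the nonarchimedean estimate should be justified by noting that a function $u$ with poles only over $B$ is regular on the affine part of $\widetilde C$, hence (after scaling by an integer) integral over $\mathbb{Z}[x,y]/(f)$, giving $|u(P)|_\ell\leq 1$ outside an effectively bounded set of primes; then no ``smallness versus denominators'' balance is needed at all --- boundedness of $h(u(P))$ together with $u$ nonconstant (so $h(x_0)\leq c_1h(u(P))+c_2$, or finitely many effectively computable fibers $u=\alpha$) finishes the proof. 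So your closing paragraph misplaces the difficulty for the argument you actually run; the balance issue is real only in the truncation variant.
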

Explicit bounds for the solutions in Runge's theorem (and its generalizations) have been given in \cite{Hil} and \cite{Wal}.  A geometric formulation of Runge's theorem which is valid for arbitrary rings of $S$-integers is the following.
\begin{theorem}
\label{Run2}
Let $C$ be a nonsingular projective curve defined over a number field $K$.  Let $\phi\in K(X)$ be a rational function on $C$.  Let $S$ be a finite set of places of $K$ containing the archimedean places.  Let $(\phi)_\infty$ be the divisor of poles of $\phi$ and let $r$ be the number of irreducible components over $K$ of the support of $(\phi)_\infty$.  If $r>|S|$ then the set $\{P\in C(K)\mid \phi(P)\in \co_{K,S}\}$ is finite and can be effectively determined.
\end{theorem}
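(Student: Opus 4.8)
The plan is to run the classical Runge argument in the language of local height (Weil) functions, reducing the statement to an effective bound on the height of the $S$-integral points. Suppose $P\in C(K)$ satisfies $\phi(P)\in\co_{K,S}$. First I would fix once and for all an explicit projective embedding of $C$ with hyperplane class $H$, and aim to bound the height $h_H(P)$ by a constant that is \emph{effectively} computable from explicit equations for $C$ and $\phi$; the effective form of Northcott's theorem (enumerate the finitely many points of $C(K)$ of bounded height and test each against the equations) then gives the conclusion. Since $\phi$ must be nonconstant (otherwise $r=0<|S|$), write the polar divisor as $(\phi)_\infty=\sum_{i=1}^r n_iE_i$, where the $E_i$ are the distinct prime divisors over $K$ in $\Supp(\phi)_\infty$, $n_i\ge 1$, and $r\ge 1$.

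For each place $v$ of $K$ fix local Weil functions $\lambda_{E_i,v}$ for the $E_i$ and $\lambda_{[\infty],v}=\log^+|\cdot|_v$ for the point at infinity of $\mathbb{P}^1$; all $O(1)$'s below are effective. I would record three ingredients. (i) Because the $E_i$ have pairwise disjoint supports, for each place $v$ there is at most one index $i$ with $\lambda_{E_i,v}(P)>B_v$, where $B_v\ge 0$ is effective and $B_v=0$ for all but finitely many $v$: for non-archimedean $v$ outside an effective finite set this is the statement that $P$ reduces modulo $v$ into the closure of at most one of the (disjoint) closed points $E_i$, and for the finitely many remaining places (including the archimedean ones) one uses that $\Supp(\phi)_\infty$ is finite so the $E_i$ are pairwise at positive $v$-adic distance. (ii) Since $\phi(P)\in\co_{K,S}$ we have $\log^+|\phi(P)|_v=0$ for $v\notin S$, so by functoriality $\lambda_{(\phi)_\infty,v}(P)=\lambda_{[\infty],v}(\phi(P))+O(1)=O(1)$ for $v\notin S$; as $\lambda_{(\phi)_\infty,v}=\sum_i n_i\lambda_{E_i,v}+O(1)$ and each $\lambda_{E_i,v}$ is bounded below, this forces $\lambda_{E_i,v}(P)\le B_v$ for all $i$ whenever $v\notin S$, after enlarging the finitely many nonzero $B_v$ by an effective amount. (iii) For every $i$, $h_{E_i}(P)=\sum_v\lambda_{E_i,v}(P)+O(1)$.

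Now I would apply pigeonhole. For $v\in S$ let $f(v)$ be the unique index with $\lambda_{E_{f(v)},v}(P)>B_v$ if it exists, and undefined otherwise. Since $|S|<r$, some $i_0\in\{1,\dots,r\}$ is not in the image of $f$; then $\lambda_{E_{i_0},v}(P)\le B_v$ for all $v\in S$ (choice of $i_0$) and for all $v\notin S$ (ingredient (ii)), so by (iii) $h_{E_{i_0}}(P)\le\sum_v B_v+O(1)=O(1)$, an effective bound. Since $E_{i_0}$ is an effective divisor of positive degree on the curve $C$ it is ample, and choosing (via Riemann--Roch) an integer $N$ so large that $\deg(NE_{i_0}-H)\ge g$ makes $NE_{i_0}-H$ linearly equivalent to an effective divisor $D'$, whence $h_H(P)\le Nh_{E_{i_0}}(P)-h_{D'}(P)+O(1)\le Nh_{E_{i_0}}(P)+O(1)=O(1)$. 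Letting $i_0$ range over its finitely many possible values shows $\{P\in C(K):\phi(P)\in\co_{K,S}\}$ has effectively bounded height, and the effective Northcott argument finishes the proof.

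The main obstacle is ingredient (i): making precise, with effective constants, the claim that at each place of $K$ a $K$-rational point can be $v$-adically close to at most one of the $K$-irreducible components of $\Supp(\phi)_\infty$. This is exactly the place where irreducibility over $K$ (rather than over $\kk$) is used — over $\kk$ the components would split and the count in the pigeonhole step would fail — and it requires genuine care at the archimedean places and at the finitely many non-archimedean places of bad reduction. The remaining steps (the height comparison in the last paragraph and the effective Northcott step) are routine but must be executed with explicit constants so that the final determination is effective.
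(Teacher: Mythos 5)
Your proposal is correct and follows essentially the same route as the paper: the pigeonhole over the places of $S$, using that a point can be $v$-adically close to at most one of the $r$ pairwise disjoint polar components over $K$, followed by an effective height bound and an effective Northcott step, is exactly the argument the paper gives (Theorem \ref{gR} specialized to curves, with $m=1$). Your ingredient (i) — the effective $M_K$-constants $B_v$ — is precisely what the paper's Lemma \ref{W} supplies via the effective Nullstellensatz, applied there to suitably rescaled auxiliary functions from the Riemann--Roch spaces $L(m_iE_i)$ in place of your Weil functions $\lambda_{E_i,v}$.
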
 
Theorem \ref{Run2} contains Theorem \ref{Run1} as a special case.  Indeed, under the hypotheses of Theorem \ref{Run1}, let $C$ be a projective closure of the affine plane curve defined by $f=0$ and let $\pi:C'\to C$ be a normalization.  Set $\phi=x\circ \pi$,  $K=\mathbb{Q}$, and $S=\{\infty\}$.  We can now apply Theorem~\ref{Run2}, noting that the factorization condition in Theorem~\ref{Run1} implies that the support of $(\phi)_\infty$ has at least two components over $\mathbb{Q}$.

Building on work of Sprind{\v{z}}uk \cite{Spr}, Bombieri \cite{Bom} proved a uniform version of Runge's theorem, allowing the number field $K$ and set of places $S$ to vary.  We state the theorem using the same notation as in Theorem \ref{Run2}.
\begin{theorem}[Bombieri, Sprind{\v{z}}uk]
\label{Bomb}
For $L\supset K$, let $r_L$ denote the number of irreducible components over $L$ of the support of $(\phi)_\infty$.  Then the set of points
\begin{equation*}
\bigcup_{\substack{L\supset K,S_L\\|S_L|<r_L}}\{P\in C(L)\mid \phi(P)\in \co_{L,S_L}\}
\end{equation*}
is finite and can be effectively determined.
\end{theorem}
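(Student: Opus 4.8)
The plan is to combine the local analysis underlying Theorem \ref{Run2} with a Siegel-type counting argument to get a bound on $[L:K]$, so that only finitely many fields $L$ can occur, and then apply Theorem \ref{Run2} (in its $S$-integral, over-$L$ form, which is effective) to each such $L$ and each admissible $S_L$. First I would fix a point $P\in C(L)$ with $\phi(P)\in\co_{L,S_L}$ and $|S_L|<r_L$. Let $D_1,\dots,D_{r_L}$ be the irreducible components of $\Supp((\phi)_\infty)$ over $L$; these are pairwise disjoint effective divisors (the reduced irreducible components), so for each place $v$ of $L$ the point $P$ can be $v$-adically close to at most one of them. Because $\phi(P)$ is an $S_L$-integer, $P$ is \emph{not} $v$-adically close to any $D_i$ for $v\notin S_L$: a standard argument with the function $\phi$ (or with local equations for the $D_i$ together with the fact that $\phi^{-1}$, suitably interpreted, is regular near each $D_i$) shows that $|\phi(P)|_v\le 1$ forces a lower bound on the $v$-adic distance from $P$ to $\Supp((\phi)_\infty)$. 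Combined with a product-formula / height inequality, this yields: there is an index $i$ and a constant $c=c(C,\phi)$ (independent of $L$, $S_L$, $P$) such that the "proximity to $D_i$" summed over all places is at most $c$ times $[L:K]$, while the "arithmetic size" of $P$ relative to $D_i$ — essentially a height $h(P)$ — is comparable.

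The key quantitative input, exactly as in Bombieri's argument, is that for each component $D_i$ there is an auxiliary function realizing the linear equivalence (or a power of it) that is regular away from $D_i$, so that an $S_L$-integer value of $\phi$ propagates to an $S_L$-integrality statement for this auxiliary function as well; playing the $r_L$ functions attached to $D_1,\dots,D_r$ off against each other and using $|S_L|<r_L$ forces, by the product formula, that $h(P)$ is bounded in terms of $[L:K]$ alone — say $h(P)\le c_1[L:K]$. On the other hand, $P$ lies on the (fixed, $K$-rational) curve $C$ and $[L:K]$ can be taken equal to $[K(P):K]$ (enlarging $L$ only helps the hypothesis since $r_L$ is nondecreasing and $|S_L|$ can absorb the extra places — more precisely one replaces $L$ by $K(P)$ and $S_L$ by the places over the old $S_L$, checking $|S_{K(P)}|<r_{K(P)}$ still holds). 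A lower bound for the height of algebraic points of bounded degree on a fixed curve of positive genus (a Northcott-type or Bogomolov-type estimate), or more elementarily the observation that points of degree $d$ over $K$ on $C$ whose height is $O(d)$ form a set that can only be infinite if $C$ has a map to $\mathbb{P}^1$ of low degree — here one uses that a degenerate family would force $D_i$ to become reducible or $\phi$ to have few poles — gives the complementary inequality $h(P)\ge c_2[L:K]$ valid once $[L:K]$ is large, with $c_2$ that can be made to exceed $c_1$. This bounds $[L:K]$.

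Having bounded $[L:K]\le N_0$, there are only finitely many number fields $L\supset K$ with $[L:K]\le N_0$ that are \emph{unramified outside a fixed set} — but here $L$ is not a priori unramified anywhere, so instead I would argue as follows: the finitely many $P$ of degree $\le N_0$ that survive the height bound actually have height $\le c_1 N_0$, hence by Northcott's theorem there are only finitely many such algebraic points $P$ altogether; collecting the fields $L=K(P)$ gives a finite list $L_1,\dots,L_m$, and for each $L_j$ the admissible $S_{L_j}$ satisfy $|S_{L_j}|<r_{L_j}$, so Theorem \ref{Run2} applies and effectively determines the finite set for each pair $(L_j,S_{L_j})$ with the $S_{L_j}$ ranging (finitely, once we bound which $S_{L_j}$ can produce a point, again via the integrality constraint at places of large residue characteristic) over the relevant sets. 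The union is then finite and effectively determined. The main obstacle I anticipate is making the two height inequalities \emph{uniform in $L$} with explicit constants: the upper bound $h(P)\le c_1[L:K]$ must come out of the product-formula manipulation with a constant depending only on $(C,\phi)$ and not on the arithmetic of $L$, which requires carefully choosing the auxiliary functions and their divisors over $K$ (or over a fixed finite extension) once and for all, and bounding the contributions of the "bad" places where local equations of the $D_i$ fail to be nice — this is the technical heart of the Bombieri–Sprind\v{z}uk method and is where all the effectivity bookkeeping lives.
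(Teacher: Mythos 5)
There is a genuine gap, and it sits exactly where your argument does the most work: the claimed lower bound $h(P)\geq c_2[L:K]$ for algebraic points on $C$. No such Northcott- or Bogomolov-type estimate exists: on any fixed curve there are algebraic points of arbitrarily large degree and bounded height (e.g.\ preimages under a fixed map $C\to\mathbb{P}^1$ of small-height algebraic numbers of large degree; Bogomolov/Zhang only gives a positive absolute lower bound, not one growing with the degree). So the proposed comparison $c_1[L:K]\geq h(P)\geq c_2[L:K]$ cannot bound $[L:K]$. Moreover the upper bound you aim for is weaker than what the method actually gives: in the Runge/pigeonhole step the height bound is \emph{absolute}, not linear in $[L:K]$. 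This is how the paper's proof of the higher-dimensional generalization (Theorem \ref{gR}, which contains Theorem \ref{Bomb} as the curve case) runs: Lemma \ref{W}, proved via an effective Nullstellensatz, produces an $M_{L'}$-constant $\gamma$ depending only on $C$, $\phi$ and the decomposition of $(\phi)_\infty$ into components over $L$ — and there are only finitely many possible such decompositions as $L$ varies, so $\gamma$ is uniform. Since at each place at most one component can be $v$-adically close to $P$, the hypothesis $|S_L|<r_L$ and the pigeonhole principle yield a function $\phi_i$ with $\log|\phi_i(P)|_w\leq\gamma_{v_w}$ for \emph{all} $w\in S_L$; as $\phi_i(P)\in\co_{L,S_L}$, this gives $h(\phi_i(P))\leq\sum_{w\in S_L}\max\{0,\gamma_{v_w}\}$, a bound depending only on $(C,\phi)$ because $|S_L|<r_L\leq\deg(\phi)_\infty$.

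The degree bound you struggle to obtain is, in fact, trivial and is the reason the theorem is stated with $S_L$ containing the archimedean places: $|S_L|\geq [L:\mathbb{Q}]/2$, so $|S_L|<r_L\leq$ (number of poles of $\phi$ over $\Qbar$) forces $[L:\mathbb{Q}]<2\deg(\phi)_\infty$. Bounded height plus bounded degree then gives finiteness by Northcott, effectively, with no need to first enumerate the fields $L$ or to invoke Theorem \ref{Run2} field by field (your reduction from $L$ to $K(P)$ is also delicate, since $r_{K(P)}\leq r_L$ and the inequality $|S_{K(P)}|<r_{K(P)}$ need not survive the descent — but one never needs it). Your overall frame (play the $r_L$ components off against each other via the product formula, with constants chosen once over a fixed extension) is the right spirit and matches the paper's mechanism; the failure is in replacing the uniform $M_K$-constant bookkeeping by a height-versus-degree comparison that is both false as stated and unnecessary.
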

Here $L$ ranges over all number fields and $S_L$ over sets of places of $L$ (containing the archimedean places).

The purpose of this paper is to expand the range of problems to which Runge's method can be applied and to give some explicit applications.  In the next section we prove a general version of Runge's theorem, extending Theorem~\ref{Bomb} to higher-dimensional varieties.  Following that, we show how unramified coverings of curves can be advantageously used in conjunction with Runge's method.  Roughly speaking, this allows Runge's method to be applied to curves which have large-rank rational torsion subgroups in their Jacobian and not too many places of bad reduction.  Natural examples of such curves are given by superelliptic curves $y^n=f(x)$, where $f$ splits into many factors over $\mathbb{Q}$ and the discriminant of $f$ has relatively few prime divisors.  We study such superelliptic curves in Section \ref{ssuper}.  Finally, as an application, we take up the well-studied problem of almost squares in products of arithmetic progressions and give some new results.

\section{Runge's theorem in higher dimensions}
Before stating our general formulation of Runge's theorem, we introduce some notation for integral points on arbitrary varieties.  Let $V$ be a variety (not necessarily projective or affine) defined over a number field $K$.  Let $S$ be a finite set of places of $K$ (containing, as throughout this paper, the archimedean places).  We call a set $R\subset V(K)$ a set of $S$-integral points on $V$ if for every regular function $\phi\in K(V)$ on $V$ there exists a nonzero constant $c\in K^*$ such that $c\phi(P)\in \co_{K,S}$ for all $P\in R$.  This definition is, in general, slightly more inclusive than the notion of $S$-integral points coming from Weil functions or integral models.  

It will be convenient to give definitions which also allow the set of places and the number field to vary.  We call a set $R\subset V(K)$ a set of $s$-integral points on $V$ if for every point $P\in R$ there exists a set of places $S_P$ of $K$ with $|S_P|\leq s$, and for every regular function $\phi\in K(V)$ on $V$ there exists a nonzero constant $c_\phi\in K^*$, independent of $P$, such that $c_\phi\phi(P)\in \co_{K,S_P}$.  Thus, essentially, an $s$-integral set of points on $V$ is a union of $S$-integral sets where $S$ varies over sets of places of $K$ with cardinality at most $s$.  Finally, if $s(L)$ is a function on number fields $L\supset K$, we call a set  $R\subset V(\kk)$ a set of $s(L)$-integral points on $V$ if for every point $P\in R$ there exists a set of places $S_P$ of $K(P)$ with $|S_P|\leq s(K(P))$, and for every regular function $\phi\in \kk(V)$ on $V$ there exists a nonzero constant $c_\phi\in K^*$, independent of $P$, such that $|c_\phi\phi(P)|_v\leq 1$ for all places $v$ of $K(P)$ not in $S$ (extending each place $v$ of $K(P)$ to $\kk$ in some fixed way).

In order to state our theorem, it will also be necessary to recall the definition of the Kodaira-Iitaka dimension $\kappa(D)$ of a divisor $D$.  Let $D$ be a divisor on a nonsingular projective variety $X$.  Then we let $L(D)=\{\phi\in \kk(X)\mid \dv(\phi)+D\geq 0\}$ and $h^0(D)=\dim H^0(X,\co(D))=\dim L(D)$.  If $h^0(nD)=0$ for all $n>0$ then we let $\kappa(D)=-\infty$.  Otherwise, we define the dimension of $D$ to be the integer $\kappa(D)$ such that there exist positive constants $c_1$ and $c_2$ with
\begin{equation*}
c_1 n^{\kappa(D)} \leq h^0(nD)\leq c_2 n^{\kappa(D)}
\end{equation*}
for all sufficiently divisible $n>0$.  We define a divisor $D$ on $X$ to be big if $\kappa(D)=\dim X$.

With the above notation, we generalize Bombieri's version of Runge's theorem to higher dimensions as follows.
\begin{theorem}
\label{gR}
Let $X$ be a nonsingular projective variety defined over a number field $K$.  Let $D=\sum_{i=1}^rD_i$ be an effective divisor on $X$ defined over $K$, with $D_1,\ldots, D_r$ distinct prime divisors (defined over $\kk$).  Suppose that the intersection of any $m+1$ of the supports of the divisors $D_i$ is empty.  Let $r(L)$ be the number of irreducible components of the support of $D$ over $L$.  Let $s(L)$ be a function such that $ms(L)<r(L)$.
\renewcommand{\theenumi}{(\alph{enumi})}
\renewcommand{\labelenumi}{\theenumi}
\begin{enumerate}
\item  If $\kappa(D_i)>0$ for all $i$, then any set $R$ of $s(L)$-integral points on $X\setminus D$ belongs to an effectively computable proper Zariski-closed subset $Z\subset X$.\label{R1}
\item If $D_i$ is big for all $i$, then there exists an effectively computable proper Zariski-closed subset $Z\subset X$ such that for any set $R$ of $s(L)$-integral points on $X\setminus D$, the set $R\backslash Z$ is finite (and effectively computable).\label{R2}
\item If $D_i$ is ample for all $i$, then all sets $R$ of $s(L)$-integral points on $X\setminus D$ are finite and effectively computable.\label{R3}
\end{enumerate}
\end{theorem}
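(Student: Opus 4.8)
The plan is to reduce the statement to a uniform bound on the $D_j$-heights of the points of $R$, and then to read off the Zariski-closed set $Z$ from the geometry of the linear systems $|nD_j|$ via Northcott's theorem. One preliminary remark drives everything: the hypothesis $m\,s(L)<r(L)$, together with $r(L)\le r$, forces the fields $K(P)$ that occur for $P\in R$ to have bounded degree over $\mathbb{Q}$ — indeed $S_P$ must contain all archimedean places of $K(P)$, so $[K(P):\mathbb{Q}]\le 2|S_P|\le 2s(K(P))<2r/m$. This uniform bound on degrees is what will allow Northcott's theorem to be applied in the same uniform style as in Theorem~\ref{Bomb}. (We may of course assume $\dim X\ge 1$.)

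First I would perform the Runge pigeonhole in higher dimension, using the theory of Weil functions. Fix Weil functions $\lambda_{D_i,v}$ for the prime divisors $D_i$ and recall their basic properties: additivity in the divisor, functoriality, and the fact that for a finite family of closed subschemes with empty total intersection the minimum of the associated Weil functions is $O(1)$ uniformly in $v$, with an effectively computable constant. Since any $m+1$ of the supports $\Supp D_i$ meet in $\emptyset$, this says: at each place a point can be ``$v$-adically close'' to at most $m$ of the $D_i$. For $P\in R$ put $L=K(P)$ and let $E_1,\dots,E_{r(L)}$ be the $L$-irreducible components of $\Supp D$; being close to $E_k$ at a place $w$ of $L$ means, after fixing an extension of $w$ to $\kk$, being close to one of the distinct conjugate $D_i$ comprising $E_k$. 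Hence $P$ is close to at most $m$ of the $E_k$ at each $w\in S_P$, so to strictly fewer than $m\,s(L)<r(L)$ of the $E_k$ in total; picking a component $E_{k_0}$ that $P$ avoids at every place of $S_P$, we obtain $\lambda_{D_j,\tilde w}(P)\le c_0$ for every $\tilde w$ above a place of $S_P$ and every prime divisor $D_j\le E_{k_0}$, with $c_0$ effective and independent of $P$.

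Next I would convert integrality into a global height bound; this is where the hypothesis $\kappa(D_i)>0$ enters. For $n$ sufficiently divisible there is a non-constant $g\in L(nD_i)$, and since $D_i$ is prime every such $g$ has all of its poles along $D_i$, so $(g)_\infty=a_iD_i$ with $1\le a_i\le n$; in particular $g$ is a regular function on $X\setminus D_i\supseteq X\setminus D$, so $s(L)$-integrality of $R$ provides $c_g\in K^*$ with $|c_g\,g(P)|_v\le1$ for all places $v$ of $K(P)$ outside $S_P$ and all $P\in R$. Summing $\log^+|g(P)|_v=a_i\lambda_{D_i,v}(P)+O(1)$ over $v\notin S_P$ and using the product formula yields $\sum_{v\notin S_P}\lambda_{D_i,v}(P)\le C_1$, with $C_1$ effective and depending only on the finitely many relevant pairs $(i,g)$. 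Feeding this (for each $D_i\le E_{k_0}$) together with the bound from the previous step and $|S_P|<r/m$ into $h_{E_{k_0}}(P)=\sum_v\lambda_{E_{k_0},v}(P)+O(1)$, we get $h_{E_{k_0}}(P)\le C$, hence $h_{D_j}(P)\le C$ for any prime $D_j\le E_{k_0}$, with $C$ effective and independent of $P$. Therefore $R\subseteq\bigcup_{j=1}^r\{Q\in X(\kk):h_{D_j}(Q)\le C\}$, where, by the preliminary remark, every such $Q$ also satisfies $[K(Q):\mathbb{Q}]<2r/m$.

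It remains to describe, for a single $D_j$, the set of $Q$ of bounded $D_j$-height and bounded degree; the three cases differ only in this last step. If $D_j$ is ample, $h_{D_j}$ is comparable to a projective height, so by the effective form of Northcott's theorem this set is finite and computable, which gives \ref{R3} after taking the union over $j$. If $D_j$ is big, write $nD_j\sim A_j+F_j$ with $A_j$ ample and $F_j$ effective; for $Q\notin\Supp F_j$ one has $h_{A_j}(Q)\le n\,h_{D_j}(Q)+O(1)$, so Northcott leaves only finitely many such $Q$ of bounded degree, giving \ref{R2} with $Z=\bigcup_j\Supp F_j$. If only $\kappa(D_j)>0$, decompose $|nD_j|$ ($n$ sufficiently divisible) into mobile and fixed parts, let $\Phi_j\colon X\dashrightarrow Y_j$ be the associated rational map, so $\dim Y_j=\kappa(D_j)\ge1$; for $Q$ outside the (proper, closed, computable) union of the base locus and the fixed part one has $h_{\co_{Y_j}(1)}(\Phi_j(Q))\le n\,h_{D_j}(Q)+O(1)$, so by Northcott $\Phi_j(Q)$ lies in a computable finite subset of $Y_j$, and since $\dim Y_j\ge1$ the (closure of the) preimage of this finite set, together with the base locus and fixed part, is a proper closed subset $Z_j\subsetneq X$; then $Z=\bigcup_jZ_j$ proves \ref{R1}. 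The hard part of all this is the first step: setting up the Weil functions with the empty-intersection bound and, above all, correctly matching the $\kk$-prime divisors $D_i$ — which carry the geometric hypotheses — with the $L$-irreducible components $E_k$ — which carry $r(L)$ and $s(L)$ — so that the count ``$P$ is close to at most $m$ of the $E_k$ at each place of $L$'' is legitimate. This forces one to follow the Galois action on the $D_i$ and on the places of $\kk$ above a place of $L=K(P)$, and to check that every constant produced is uniform in $L$ and $P$ — which in the end works precisely because $r(L)$ is bounded, the same feature that underlies Bombieri's uniform version.
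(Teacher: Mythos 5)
Your overall architecture---the pigeonhole over the $L$-components of $D$ at the places of $S_P$, the observation that $[K(P):\mathbb{Q}]\le 2|S_P|<2r/m$ so that Northcott can be applied uniformly, and the ample/big/$\kappa>0$ trichotomy at the end---is in the right spirit and largely parallels the paper. The gap is in the middle step, where you convert integrality into $\sum_{v\notin S_P}\lambda_{D_i,v}(P)\le C_1$ and hence into $h_{D_j}(P)\le C$. The identity $\log^+|g(P)|_v=a_i\lambda_{D_i,v}(P)+O(1)$ you use there is false on varieties of dimension $\ge 2$: one only has $\log|g|_v=\lambda_{(g)_\infty,v}-\lambda_{(g)_0,v}+O(1)$, and since $(g)_0$ and $(g)_\infty=a_iD_i$ will in general intersect, a point that is $v$-adically close to $\Supp (g)_0\cap\Supp D_i$ can satisfy $|g(P)|_v\le 1$ while $\lambda_{D_i,v}(P)$ is arbitrarily large; so the bound $|c_g\,g(P)|_v\le 1$ for $v\notin S_P$ gives no control on the counting term $\sum_{v\notin S_P}\lambda_{D_i,v}(P)$. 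Nor can this be repaired by using more regular functions when one only assumes $\kappa(D_i)>0$: the stable base locus of $|nD_i|$ may meet $D_i$, so no family of elements of $\bigcup_n L(nD_i)$ computes $\lambda_{D_i,v}$ up to $O(1)$, and the paper's deliberately weak, purely functional definition of $s(L)$-integrality (which it explicitly notes is more inclusive than the Weil-function/integral-model notion) simply does not yield the Weil-function counting bound. Consequently your intermediate claim $h_{D_j}(P)\le C$ is unproved---and under the stated definition of integrality it should not be expected to hold in general---and everything after it rests on it. (In the ample case your step can be salvaged by running the estimate with a base-point-free basis of $L(nD_j)$ rather than a single $g$, but not in the $\kappa(D_j)>0$ case, which is exactly part \ref{R1}.)

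The repair, and what the paper actually does, is never to bound the divisorial height of $P$ but only heights of values of regular functions. For part \ref{R1} one takes, for each $L$-component $E_i$, a single non-constant $\phi_i$ with poles supported on $E_i$; integrality gives $\|\phi_i(P)\|_v\le 1$ off $S_P$, while the pigeonhole together with Lemma \ref{W} (which is the correct direction: far from the poles at $v$ implies $|\phi_i(P)|_v$ bounded) bounds $|\phi_i(P)|_w$ at the places $w\in S_P$ for the selected component; hence $h(\phi_i(P))\le A$, and $P$ lies on one of finitely many level sets $\phi_i=\alpha$ with $h(\alpha)\le A$ and $[\mathbb{Q}(\alpha):\mathbb{Q}]$ bounded. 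For parts \ref{R2} and \ref{R3} the same argument is run simultaneously for a full basis of $L(m_iE_i)$, so that the projective height of the image point $\Phi_{m_iE_i}(P)$ is bounded, and one concludes via birationality (resp.\ an embedding) of $\Phi_{m_iE_i}$ outside an effectively computable $Z_i$. This uses only the upper-bound direction at every place and avoids $h_{D_j}(P)$ altogether; your Northcott endgame then goes through essentially unchanged.
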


We note that the hypothesis $ms(L)<r(L)$ in Theorem \ref{gR} is sharp, in that there are examples with $ms(L)=r(L)$ which violate the conclusions of parts \ref{R1}, \ref{R2}, or \ref{R3} of the theorem.  For instance, let $X=\mathbb{P}^2$ and let $D$ be a sum of $2m$ lines $D_1,\ldots,D_{2m}$, defined over $\mathbb{Q}$, with exactly $D_1,\ldots, D_m$ meeting at a point $P$ and $D_{m+1},\ldots,D_{2m}$ meeting at a different point $Q$.  Let $K$ be a number field with a set of places $S$, containing the archimedean place(s), of cardinality $|S|=2$.  Then the line through $P$ and $Q$ will contain an infinite set of $S$-integral points on $X\setminus D$.  It follows that a strict inequality $ms(L)<r(L)$ is necessary for part \ref{R3} to hold.

Given the geometric nature of the statement of our theorem, a few words are perhaps in order on what is meant here by ``effective".  Since our focus is on the arithmetic of varieties, we will take it as a given that one can explicitly compute certain fundamental geometric objects associated to the variety $X$ and the divisors $D_1,\ldots,D_r$.  For instance, we assume that we can compute explicit projective equations for the variety $X$ (and hence a presentation of the function field of $X$) and Riemann-Roch bases associated to the divisors $D_i$ and their linear combinations.  Alternatively, one could add the appropriate geometric data to the hypotheses of the theorem.  We also assume an effective version of the definition of a set of $s(L)$-integral points for the set $R$.  That is, given a regular function $\phi$ on $X\setminus D$, we assume that one can compute the constant $c\in k^*$ in the definition for the integral point set $R$.  Under the above assumptions, our proof gives, in principle, an algorithm for computing the projective equations of the set $Z$ (in parts \ref{R1} and \ref{R2}) or the set $R$ in part \ref{R3}.

Theorem \ref{gR} will be proved using Lemma \ref{W} below.  Lemma \ref{W} is a standard lemma which arises, for instance, in the construction of Weil functions.  However, in the interest of completeness, we provide a proof.  Before stating the lemma, we recall some relevant definitions.  We denote by $M_K$ the set of inequivalent absolute values of $K$.  We normalize our absolute values so that they extend the usual ones on $\mathbb{Q}$:  $|p|_v=\frac{1}{p}$ if $v$ corresponds to a prime ideal $\mathfrak{p}$ and $\mathfrak{p}|p$, and $|x|_v=|\sigma(x)|$ if $v$ corresponds to an embedding $\sigma:K\hookrightarrow\mathbb{C}$.    For $v\in M_K$, we denote by $K_v$ the completion of $K$ with respect to $v$.  We set $\|x\|_v=|x|_v^{[K_v:\mathbb{Q}_v]/[K:\mathbb{Q}]}$.  Thus, for $\alpha\in K$, the absolute multiplicative height is given by
\begin{equation*}
H(\alpha)=\prod_{v\in M_K}\max\{1,\|\alpha\|_v\}
\end{equation*}
and the absolute logarithmic height by $h(\alpha)=\log H(\alpha)$.  We define an $M_K$-constant to be a family of real numbers $(\gamma_v)_{v\in M_K}$ such that $\gamma_v=0$ for all but finitely many $v$.
\begin{lemma}
\label{W}
Let $X$ be a nonsingular projective variety defined over a number field $K$.  Extend each absolute value in $M_K$ in some way to $\kk$.  Let $\phi_1,\ldots, \phi_m\in K(X)$ be rational functions on $X$ without a common pole.  Then there exists an effectively computable $M_K$-constant $\gamma$, independent of the way each absolute value was extended, such that
\begin{equation*}
\min_{1\leq i\leq m}\log |\phi_i(P)|_v\leq \gamma_v
\end{equation*}
for all $v\in M_K$ and all $P\in X(\kk)$.
\end{lemma}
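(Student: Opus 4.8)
The plan is to reduce the statement, via a fixed projective embedding of $X$, to an elementary estimate for homogeneous polynomials, with the ``no common pole'' hypothesis entering exactly once, through an application of the Nullstellensatz. So first I would fix a projective embedding $X\subseteq\mathbb{P}^N$ over $K$, with homogeneous coordinates $y_0,\dots,y_N$. A degree-$d$ form $G\in K[y_0,\dots,y_N]$ not vanishing identically on $X$ restricts to a global section of $\mathcal{O}_X(d)$, and for $d$ large (effectively so) every global section of $\mathcal{O}_X(d)$ arises in this way.

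Let $E_i=(\phi_i)_\infty$ be the pole divisor of $\phi_i$; it is effective and defined over $K$, and its defining data is computable. For $d$ sufficiently large (again effectively, via a Castelnuovo--Mumford regularity bound for the ideal sheaf of $E_i$ in $X$) the sheaf $\mathcal{O}_X(d)\otimes\mathcal{O}_X(-E_i)$ is globally generated, so the space of degree-$d$ forms vanishing along $E_i$ has common zero locus on $X$ equal to $\Supp E_i$; and since $(\phi_i)_0\ge 0$, multiplication by $\phi_i$ carries this space into the space of degree-$d$ forms (those vanishing along $(\phi_i)_0$). In particular, for each degree-$d$ form $G$ vanishing along $E_i$ there is a degree-$d$ form $F$ over $K$ with $\phi_i=F/G$ on $X$.

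Next, for each $i$ I would pick finitely many degree-$d$ forms $G_{i,1},\dots,G_{i,k_i}\in K[y_0,\dots,y_N]$ vanishing along $E_i$ whose common zero locus on $X$ is already $\Supp E_i$, together with forms $F_{i,l}$ such that $\phi_i=F_{i,l}/G_{i,l}$ on $X$. Because $\phi_1,\dots,\phi_m$ have no common pole we have $\bigcap_i\Supp E_i=\emptyset$, hence the collection $\{G_{i,l}\}$ has no common zero on $X$; by the Nullstellensatz in effective form there are an integer $e\ge d$ and forms $H^{(j)}_{i,l}\in K[y_0,\dots,y_N]$ of degree $e-d$ with $y_j^{\,e}\equiv\sum_{i,l}H^{(j)}_{i,l}G_{i,l}\pmod{I(X)}$ for each $j=0,\dots,N$. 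All the data $d$, $e$ and the forms $G_{i,l},F_{i,l},H^{(j)}_{i,l}$ can be produced by linear algebra over $K$ together with Gr\"obner basis computations, so they are effectively computable.

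Finally I would prove the estimate. Fix $v\in M_K$, extended in some way to $\kk$, and $P\in X(\kk)$; rescale the homogeneous coordinates $a=(a_0,\dots,a_N)$ of $P$ so that $\max_j|a_j|_v=1$, which leaves each ratio $F_{i,l}(a)/G_{i,l}(a)$ unchanged. From the coefficients of the finitely many forms $F_{i,l}$ one gets an $M_K$-constant $C$ with $|F_{i,l}(a)|_v\le e^{C_v}$, and from the identities above an $M_K$-constant $B$ with $1=\max_j|a_j|_v^{\,e}\le e^{B_v}\max_{i,l}|G_{i,l}(a)|_v$; these inequalities use only $|\cdot|_v$ on $K$ and the normalization $\max_j|a_j|_v=1$, so they are independent of the chosen extension of $v$. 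Choose $(i_0,l_0)$ with $|G_{i_0,l_0}(a)|_v=\max_{i,l}|G_{i,l}(a)|_v\ge e^{-B_v}>0$. Then $P$ lies off the zero locus of $G_{i_0,l_0}$, which contains $E_{i_0}=(\phi_{i_0})_\infty$, so $\phi_{i_0}$ is regular at $P$ with $\phi_{i_0}(P)=F_{i_0,l_0}(a)/G_{i_0,l_0}(a)$, and therefore
\[
\min_{1\le i\le m}\log|\phi_i(P)|_v\le\log|\phi_{i_0}(P)|_v\le C_v+B_v=:\gamma_v ,
\]
which is the desired bound. I expect the main obstacle to be the geometric bookkeeping of the second paragraph: arranging, with effective degree bounds, that each pole divisor $E_i$ is cut out on $X$ (up to its support) by degree-$d$ forms in a way compatible with multiplication by $\phi_i$. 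Once this is set up, the hypothesis $\bigcap_i\Supp E_i=\emptyset$ feeds directly into the Nullstellensatz, and the rest is routine estimation with $M_K$-constants.
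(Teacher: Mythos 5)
Your proof is correct and is essentially the paper's argument: both rest on an effective Nullstellensatz certificate that the ``denominators'' of the $\phi_i$ (functions or forms vanishing on the pole divisors, so without common zero on $X$ by the no-common-pole hypothesis) admit a representation of $1$, respectively of $y_j^e$, with computable coefficients, followed by trivial coefficient estimates after normalizing the coordinates of $P$ so that they are $v$-adically bounded by $1$. The only difference is packaging: the paper works chart-by-chart with generators of the ideal sheaf of the zero divisor $(\phi_j)_0$ and affine identities $\sum_k (g_{i,j,k}/\phi_j)h_{i,j,k}=1$, whereas you work globally with degree-$d$ forms $G_{i,l}$ (sections of $\mathcal{O}_X(d)\otimes\mathcal{O}_X(-E_i)$ obtained from effective regularity/global-generation bounds) and a homogeneous identity $y_j^{\,e}\equiv\sum_{i,l}H^{(j)}_{i,l}G_{i,l}\pmod{I(X)}$ --- a cosmetic rather than substantive difference.
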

\begin{proof}
Fix an embedding of $X$ into projective space $\mathbb{P}^n$ such that $X$ is not contained in any hyperplane of $\mathbb{P}^n$.  For a point $P\in \mathbb{P}^n(\kk)$, let $(x_0(P),\ldots, x_n(P))$ be some set of projective coordinates for $P$.  Let 
\begin{equation*}
U_i=\{P\in X\mid x_i(P)\neq 0\},\quad i=0,\ldots, n.
\end{equation*}
  Let $(\phi_j)_0$ be the divisor of zeroes of $\phi_j$ and let $\mathcal{I}_j$ be the associated ideal sheaf on $X$.  Let $g_{i,j,1},\ldots, g_{i,j,k_{ij}}\in K[U_i]$ generate the sections of $\mathcal{I}_j$ over ${U_i}$.  Let $\phi_{i,j}=\phi_j|_{U_i}$.  Then for any $i,j,k$, we have $\frac{g_{i,j,k}}{\phi_{i,j}}\in K[U_i]$.  Furthermore, the functions $\frac{g_{i,j,k}}{\phi_{i,j}}$, $k=1,\ldots, k_{ij}$, have a common zero only at the poles of $\phi_{i,j}$.  Since $\phi_1,\ldots, \phi_m$ have no common pole, it follows that for any $i$, the functions $\frac{g_{i,j,k}}{\phi_{i,j}}$, $j=1,\ldots,m$, $k=1,\ldots,k_{ij}$, have no common zero on $U_i$.  By Hilbert's Nullstellensatz, there exist functions $h_{i,j,k}\in K[U_i]$ such that
\begin{equation}
\label{Nulleq}
\sum_{j=1}^m\sum_{k=1}^{k_{ij}}\frac{g_{i,j,k}}{\phi_{i,j}}h_{i,j,k}=1.
\end{equation}
Furthermore, and this is the key point regarding effectivity, Hilbert's Nullstellensatz can be made effective (e.g., \cite{MW}).  Let $F_{i,j,k}=g_{i,j,k}h_{i,j,k}$.  Let $f_{i,j}=\frac{x_j}{x_i}|_{U_i}$ be the functions on $U_i$ obtained by restriction of the rational functions $\frac{x_j}{x_i}$ on $\mathbb{P}^n$.  Then $f_{i,j}$, $j=0,\ldots, n$, generate $K[U_i]$.  It follows that each $F_{i,j,k}$ is a polynomial in the $f_{i,j}$.  Let
\begin{equation*}
E_{i,v}=\{P\in X(\kk)\mid |x_i(P)|_v=\max_j |x_j(P)|_v\},\quad i=0,\ldots, n.
\end{equation*}
Note that on $E_{i,v}$, $|f_{i,j}|_v\leq 1$.  Let $C_{i,j,k}$ be the number of terms of $F_{i,j,k}$ and let $|F_{i,j,k}|_v$ be the maximum absolute value with respect to $v$ of the coefficients of $F_{i,j,k}$ (as a polynomial in the $f_{i,j}$).  Let
\begin{align*}
\delta_v=\begin{cases}
1 &\text{if } v \text{ is archimedean}\\
0 &\text{if } v \text{ is nonarchimedean}.
\end{cases}
\end{align*}
Then $|F_{i,j,k}(P)|_v\leq C_{i,j,k}^{\delta_v}|F_{i,j,k}|_v$ for all $P\in  E_{i,v}$, $v\in M_K$.  It follows from \eqref{Nulleq} that for $P\in E_{i,v}$,
\begin{equation*}
\left(\sum_{j=1}^mk_{ij}\right)^{\delta_v}\max_{j,k} C_{i,j,k}^{\delta_v}|F_{i,j,k}|_v\max_j \left|\frac{1}{\phi_j}(P)\right|_v\geq 1,
\end{equation*}
or equivalently,
\begin{equation*}
\min_j \left|\phi_j(P)\right|_v\leq \left(\sum_{j=1}^mk_{ij}\right)^{\delta_v}\max_{j,k} C_{i,j,k}^{\delta_v}|F_{i,j,k}|_v.
\end{equation*}
Note that for any $v$, the sets $E_{i,v}$, $i=0,\ldots, n$, cover $X$ and that 
\begin{equation*}
\left(\sum_{j=1}^mk_{ij}\right)^{\delta_v}\max_{j,k} C_{i,j,k}^{\delta_v}|F_{i,j,k}|_v=1
\end{equation*}
for all but finitely many $v\in M_K$.  Thus, the lemma holds with the $M_K$-constant 
\begin{equation*}
\gamma_v=\log \max_i\left(\sum_{j=1}^mk_{ij}\right)^{\delta_v}\max_{j,k} C_{i,j,k}^{\delta_v}|F_{i,j,k}|_v.
\end{equation*}
\end{proof}

\begin{proof}[Proof of Theorem \ref{gR}]
We first prove part \ref{R1}.  Let $R$ be an $s(L)$-integral set of points on $X\setminus D$.  Let $L\supset K$ be a number field.  Let $D=\sum_{i=1}^{r(L)}E_i$ be the decomposition of $D$ into effective divisors over $L$.  Let $L'\subset L$ be the minimal field over which all the $E_i$ are defined.  Since $\kappa(D_i)>0$ for all $i$, and hence $\kappa(E_i)>0$ for all $i$, for $i=1,\ldots, r(L)$, there exists a non-constant rational function $\phi_i\in L'(X)$ such that the poles of $\phi_i$ lie in the support of $E_i$.  From the definition of $R$, after rescaling the $\phi_i$ (independent of $L$), we can assume that for any $i$ and any $P\in R$ with $K(P)=L$, $\phi_i(P)\in \co_{L,S}$ for some set of places $S$ of $L$ with $|S|\leq s(L)$.  Let $\mathcal{I}$ be the set of subsets $I\subset \{1,\ldots, r(L)\}$ such that the functions $\phi_i$, $i\in I$, have no common pole.  For $I\in \mathcal{I}$, let  $\gamma_I$ be the effectively computable $M_{L'}$-constant from Lemma \ref{W} for the set of functions $\phi_i$, $i\in I$.  Let $\gamma$ be the $M_{L'}$-constant defined by $\gamma_v=\max_{I\in \mathcal{I}} \gamma_{I,v}$.  Since the intersection of the supports of any $m+1$ distinct divisors $D_i$ is empty, any $m+1$ distinct functions $\phi_i$ have no common pole.  For $w\in M_L$, let $v_w$ denote the place of $L'$ lying below $w$.  It follows then from Lemma \ref{W} and the above definitions  that for any $P\in X(\bar{L})$ and $w\in M_L$, there exist at most $m$ functions $\phi_i$, $i\in \{1,\ldots, r(L)\}$, such that (extending $w$ to $\bar{L}$ in some way)
\begin{equation}
\label{ephi}
\log|\phi_i(P)|_w>\gamma_{v_w}.
\end{equation}
Let $P\in R$ with $K(P)=L$.  Then for all $i$, $\phi_i(P)\in \co_{L,S}$ for some set of places $S$ of $L$ with $|S|\leq s(L)$.  Since $r(L)>m|S|$, by the pigeon-hole principle and \eqref{ephi}, there exists some function $\phi=\phi_i$ such that $\log|\phi(P)|_w\leq \gamma_{v_w}$ for all $w\in S$.  As $\phi(P)\in \co_{L,S}$, it follows immediately that $h(\phi(P))\leq \sum_{w\in S}\max\{0,\gamma_{v_w}\}$.  Let $A$ be the maximum of the sum of $s(L)$ elements $\gamma_v$, $v\in M_{L'}$ (allowing repetitions).  Then $h(\phi(P))\leq A$.  Thus $P$ belongs to one of the finitely many proper Zariski-closed subsets of $X$ defined by $\phi_i=\alpha$, where $i\in \{1,\ldots, r(L)\}$ and $h(\alpha)\leq A$.  Note that the constant $A$ and the functions $\phi_i$ depended only on  the decomposition of $D$ over $L$ into the effective divisors $E_i$.  There are only finitely many possible such decompositions of $D$ into effective divisors.  Thus, going through the above proof over all such possible decompositions, we see that $R$ belongs to the union of finitely many effectively determinable proper Zariski-closed subsets of $X$.

The proofs of parts \ref{R2} and \ref{R3} are similar.  Let $L$, $L'$, and $E_i$ be as in the proof of part \ref{R1}.  Instead of considering functions $\phi_1,\ldots,\phi_{r(L)}$ with $\phi_i\in L(m_iE_i)$, for some $m_i>0$, we consider sets of functions that form bases of the spaces  $L(m_iE_i)$ for some sufficiently large $m_i$.  For instance, in the case where $D_i$, and hence $E_i$, is big for all $i$, let $m_i\in\mathbb{N}$ be such that the map $\Phi_{m_iE_i}$ associated to $L(m_iE_i)$ is birational outside of a proper Zariski-closed subset $Z_i\subset X$.  For each $i$, let $\phi_{i,1},\ldots, \phi_{i,l(m_iE_i)}\in L'(X)$ be a basis for $L(m_iE_i)$.  By scaling the functions, we can assume that they take on appropriately integral values as before.  Let $\mathcal{I}$ be the set of subsets $I\subset \{(i,j)\mid i\in\{1,\ldots,r(L)\},j\in\{1,\ldots,l(m_iE_i)\}\}$ such that the functions $\phi_{i,j}$, $(i,j)\in I$, have no common pole.  Let $\gamma$ be the $M_{L'}$-constant defined by $\gamma_v=\max_{I\in \mathcal{I}} \gamma_{I,v}$, where $\gamma_I, I\in \mathcal{I}$, is defined as in the proof of part \ref{R1}.  Define the constant $A$ with respect to $\gamma$ as before.  Let $P\in R$ with $K(P)=L$.  Then for all $i$ and $j$, $\phi_{i,j}(P)\in \co_{L,S}$ for some set of places $S$ of $L$ with $|S|\leq s(L)$.  Note that any $m+1$ functions $\phi_{i,j}$ with distinct $i$-indices have no pole in common.  Since $r(L)>m|S|$, using Lemma \ref{W} as before, there exists some $i$ such that for every function $\phi_{i,j}$, $j=1,\ldots,l(m_iE_i)$, we have $\log|\phi_{i,j}(P)|_w\leq\gamma_{v_w}$ for all $w\in S$.  It follows that the point
\begin{equation*}
\Phi_{m_iE_i}(P)=(\phi_{i,1}(P),\ldots, \phi_{i,l(m_iE_i)}(P))\in \mathbb{P}^{l(m_iE_i)-1}
\end{equation*}
is bounded in absolute logarithmic height by the constant $A$.  Since $\Phi_{m_iE_i}$ is birational outside of $Z_i$, it follows that all points $P\in R$ with $K(P)=L$ are contained in $\cup_{i=1}^r Z_i$ and a finite effectively determinable set of points.  As in part \ref{R1}, this set actually depends not on $L$, but on the divisors $E_i$.  As there are only finitely many possibilities for the $E_i$, \ref{R2} follows.  The proof for ample divisors \ref{R3} is essentially the same, except that in this case we can take $Z_i=\emptyset$ for all $i$.
\end{proof}

\section{Coverings and Runge's method}
In this section we take up the problem of expanding Runge's theorem for curves by taking advantage of unramified coverings.  Let $C$ be a curve defined over a number field $K$, $S$ a finite set of places of $K$, $\phi\in K(C)$, and $R=\{P\in C(K)\mid \phi(P)\in \co_{K,S}\}$.  It can happen that a straightforward application of Runge's method fails to prove finiteness for the set $R$, but that there is some unramified covering $\pi:X\to C$ such that Runge's method can be successfully applied to $X$, $\phi\circ \pi$, and $\pi^{-1}(R)$.  For instance, if $\phi$ has only a single pole, a straightforward application of Runge's method can never yield any information.  However, even in this case, it is sometimes possible to obtain nontrivial results by using coverings.  Roughly speaking, our reduction to a cover works if (1) there is a large-rank rational torsion subgroup in the Jacobian of $C$ and (2) if $C$ has relatively few places of bad reduction.  Examples of such curves are given in the subsequent sections, where we work out explicit bounds for integral points on certain families of curves. 

Before stating the main theorem, we introduce some more notation.  We let $\Jac(C)$ denote the Jacobian of $C$ and $\Jac(C)_{\rm{tors}}$ its torsion subgroup.  For a divisor $D$ on a variety $X$, we let $\Supp D$ denote the support of $D$.  For a finite abelian group $A$ and an integer $m>1$, we let $\rk_m A$, the $m$-rank of $A$, be the largest integer $r$ such that $(\mathbb{Z}/m\mathbb{Z})^r$ is a subgroup of $A$.  We denote the class group of the ring of $S$-integers of a number field $K$ by $\Cl(\co_{K,S})$.  If $L$ is a finite extension of the number field $K$ and $S$ a finite set of places of $K$,  we will use $\co_{L,S}$ to denote the ring of $T$-integers of $L$, where $T$ is the set of places of $L$ lying above places of $S$.  We let $\log_p$ denote the logarithm to the base $p$.

\begin{theorem}
\label{thmain2}
Let $C$ be a nonsingular projective curve of positive genus defined over a number field $K$.  Let $\phi\in K(C)$ be a rational function on $C$ such that every pole of $\phi$ is defined over $K$ and let $n_\infty$ denote the number of distinct poles of $\phi$.  Let $p$ be a (rational) prime.  Let $T$ be the union of the set of archimedean places of $K$, primes of $\co_K$ dividing $p$, and primes of bad reduction of $\Jac(C)$.    Then the set of integral points
\begin{equation*}
\bigcup_{\substack{L\supset K,S\\\log_p|S|+\rk_p \Cl(\co_{L,T})+\rk \co_{L,T}^*+1<\rk_p \Jac(C)(K)_{\rm{tors}}+\log_pn_\infty}}\{P\in C(L)\mid \phi(P)\in \co_{L,S}\}
\end{equation*}
is finite and can be effectively determined.
\end{theorem}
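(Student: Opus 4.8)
The plan is to reduce to Bombieri's uniform version of Runge's theorem (Theorem~\ref{Bomb}) applied to a carefully chosen unramified abelian cover $\pi\colon X\to C$.

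First I would set $t=\rk_p\Jac(C)(K)_{\tors}$ and fix an $\mathbb{F}_p$-basis $\tau_1,\dots,\tau_t$ of a subgroup of $\Jac(C)(K)$ isomorphic to $(\mathbb{Z}/p\mathbb{Z})^t$. Since $\phi$ has a pole, $C$ has a $K$-rational point, so $\Pic(C)=\Pic(C_{\kk})^{G_K}$; thus each $\tau_i$ is represented by a $K$-rational divisor $D_i$, which we may take supported away from the (distinct, $K$-rational) poles $Q_1,\dots,Q_{n_\infty}$ of $\phi$, and $pD_i=\dv(f_i)$ for some $f_i\in K(C)^*$; after scaling we arrange $f_i(Q_1)=1$. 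As every coefficient of $\dv(f_i)=pD_i$ is divisible by $p$, the function field $K(\zeta_p)(C)(f_1^{1/p},\dots,f_t^{1/p})$ defines an \emph{unramified} cover $\pi\colon X\to C$; independence of the $\tau_i$ in $\Jac(C)[p]$ makes it geometrically connected of degree $p^t$ over $C_{K(\zeta_p)}$ (it is a subcover of the pull-back of $[p]\colon\Jac(C)\to\Jac(C)$ along an Abel--Jacobi embedding, but I only need the Kummer-theoretic description). The function $\psi:=\phi\circ\pi$ then has exactly $n_\infty p^t$ poles, all of which become rational over any field containing $\zeta_p$ and all the $f_i(Q_j)^{1/p}$.

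Now let $P$ lie in the set in question: $P\in C(L)$, $\phi(P)\in\co_{L,S}$, with $\log_p|S|+\rk_p\Cl(\co_{L,T})+\rk\co_{L,T}^*+1<t+\log_p n_\infty$ (we discard the finitely many $P$ in the supports of the $D_i$, which are effectively listed). The fibre of $\pi$ above $P$ is governed by the classes $f_i(P)\bmod(L^*)^p$, and the usual identification shows each of these is the image of $[P-Q_1]\in\Jac(C)(L)$ under the Kummer map $\Jac(C)(L)/p\Jac(C)(L)\hookrightarrow H^1(L,\Jac(C)[p])$ followed by pairing (via the Weil pairing) with $\tau_i$. Since $\Jac(C)$ has good reduction outside $T$, this image is unramified outside $T$; the same holds for the classes $f_i(Q_j)\bmod(L^*)^p$, which are images of $[Q_j-Q_1]\in\Jac(C)(K)$. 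Hence all the elements $f_i(P),f_i(Q_j)$ lie in the group $\{x\in L^*/(L^*)^p:\ p\mid v(x)\ \text{for all } v\nmid T\}$, whose $\mathbb{F}_p$-dimension equals $\rk_p\Cl(\co_{L,T})+\rk\co_{L,T}^*+\dim\mu_p(L)$. Setting $L'=L\bigl(\zeta_p,\ f_i(P)^{1/p},\ f_i(Q_j)^{1/p}\bigr)$ we therefore obtain $[L':L]\le p^{\,\rk_p\Cl(\co_{L,T})+\rk\co_{L,T}^*+1}$, and $P$ lifts to a point $\tilde P\in X(L')$.

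Over $L'$ all $n_\infty p^t$ poles of $\psi$ are rational, so the support of $(\psi)_\infty$ has $n_\infty p^t$ irreducible components over $L'$; moreover $\psi(\tilde P)=\phi(P)\in\co_{L,S}\subseteq\co_{L',S'}$, where $S'$ is the set of places of $L'$ above $S$, so $|S'|\le[L':L]\,|S|<n_\infty p^t$ precisely by the hypothesis on $(L,S)$. Thus $\tilde P$ lies in the term of the union of Theorem~\ref{Bomb} (applied to $X$ and $\psi$) indexed by $(L',S')$, hence in a single effectively computable finite set $\Sigma\subset X(\Qbar)$; consequently every such $P$ lies in the effectively computable finite set $\pi(\Sigma)$. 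Effectivity throughout follows from the effectivity of Theorem~\ref{Bomb} together with the effective computability of $T$, of $\Cl(\co_{L,T})$ and $\co_{L,T}^*$, and of the cover $X$ and the functions $f_i$. The step I expect to require the most care is the construction of $\pi$ and, in particular, the identification of its fibre above $P$ with a Kummer class of $[P-Q_1]$: this is what simultaneously pins down the ramification of $f_i(P)\bmod(L^*)^p$ away from $T$ and lets the roots $f_i(Q_j)^{1/p}$ (needed to split the poles of $\psi$) be adjoined inside the \emph{same} Selmer-type group, which is exactly what produces the term $\log_p n_\infty$ rather than a weaker bound.
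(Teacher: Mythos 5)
Your argument is correct and follows essentially the same route as the paper: pass to the unramified degree-$p^{r}$ Kummer cover of $C$ built from the rational $p$-torsion, bound the degree of the field generated by a fibre over an $L$-point by $p^{\rk_p \Cl(\co_{L,T})+\rk \co_{L,T}^*+1}$ using the group of classes in $L^*/(L^*)^p$ unramified outside $T$, and then invoke Theorem \ref{Bomb} for $\phi\circ\pi$ with the count $|S'|\leq [L':L]\,|S|<p^{r}n_\infty$. The only substantive difference is how the key fact that $f_i(P)\co_{L,T}$ is the $p$-th power of an ideal is established: you identify $f_i(P)$ (after normalizing $f_i(Q_1)=1$) with the image of the Kummer class of $[P-Q_1]$ under the Weil pairing with $\tau_i$, unramified outside $T$ by good reduction, whereas the paper's Lemma \ref{fi} argues explicitly on the Jacobian via the theta divisor and the relation $f_j(px)=g_j(x)^p$, and uses a moving lemma instead of discarding the finitely many points in the supports of the $\dv(f_i)$; both routes are standard and give the same conclusion.
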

\begin{remark}
If a set of points $\cup_{L,S}\{P\in C(L)\mid \phi(P)\in \co_{L,S}\}$ is infinite, then Theorem \ref{thmain2} implies that for some $L$,
\begin{equation*}
\rk_p \Cl(\co_{L,T})\geq \rk_p \Jac(C)(K)_{\rm{tors}}+\log_pn_\infty-\log_p|S|-\rk \co_{L,T}^*-1.
\end{equation*}
Thus, Theorem \ref{thmain2} has the potential for being used to construct number fields with a large-rank ideal class group.  Indeed, this idea was pursued, in a slightly different form, in \cite{Lev} and \cite{Lev2}.  The lemmas in this section are essentially borrowed from \cite{Lev}.
\end{remark}
Before proving Theorem \ref{thmain2}, we prove some needed lemmas.  Let $r=\rk_p \Jac(C)(K)_{\rm{tors}}$.
\begin{lemma}
\label{fi}
Suppose that $C(K)\neq \emptyset$.  Then there exist $K$-rational divisors $D_1,\ldots, D_r$, whose divisor classes generate a subgroup $(\mathbb{Z}/p\mathbb{Z})^r\subset\Jac(C)(K)$ and rational functions $f_j\in K(C)$, $j=1,\ldots, r$, such that $\dv(f_j)=pD_j$ and for all $P\in C(\kk)$ such that $P$ is not a pole of $f_j$, 
\begin{equation}
\label{power}
f_j(P)\co_{K(P),T}=\mathfrak{a}_{P,j}^p
\end{equation}
for some fractional $\co_{K(P),T}$-ideal $\mathfrak{a}_{P,j}$.  If $P\in \Supp\dv(f_j)$, then there exists a nonzero function $h_j\in K(C)$ such that $P\not\in \Supp\dv(f_jh_j^p)$ and
\begin{equation}
f_jh_j^p(P)\co_{K(P),T}=\mathfrak{a}_{P,j}^p
\end{equation}
for some fractional $\co_{K(P),T}$-ideal $\mathfrak{a}_{P,j}$.
\end{lemma}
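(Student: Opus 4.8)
The plan is to exploit the hypothesis $\rk_p \Jac(C)(K)_{\tors} = r$ to choose a subgroup $G \cong (\mathbb{Z}/p\mathbb{Z})^r$ of $\Jac(C)(K)$ generated by the classes of $p$-torsion points, and to realize each generator by an explicit divisor and function. First I would fix a $K$-rational point $P_0 \in C(K)$ (available by hypothesis) and use the Abel--Jacobi embedding based at $P_0$ to identify $\Jac(C)(K)$ with $\Pic^0(C)(K)$; pick divisor classes $[D_1], \ldots, [D_r]$ of exact order $p$ that freely generate a copy of $(\mathbb{Z}/p\mathbb{Z})^r$. Each $D_i$ is a $K$-rational divisor of degree $0$ with $p[D_i] = 0$ in $\Pic^0(C)$, so $pD_i$ is principal and we may choose $f_i \in K(C)$ with $\dv(f_i) = pD_i$; the function $f_i$ is determined up to a $K^*$-multiple, which we will fix later by a rescaling argument.

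Next I would establish the ideal-theoretic statement \eqref{power}. For $P \in C(\kk)$ not a pole of $f_i$, consider the fractional ideal $f_i(P)\co_{K(P),T}$ and compute its valuation at each prime $\mathfrak{q}$ of $\co_{K(P),T}$. The key point is that such a prime corresponds to a place $v$ of $K(P)$ outside $T$, hence not dividing $p$, of good reduction for $\Jac(C)$, and in particular $C$ has good reduction there (after possibly enlarging $T$ by primes of bad reduction of $C$ itself, though since $\Jac(C)$ has bad reduction wherever $C$ does by the criterion of Néron--Ogg--Shafarevich / semistable reduction considerations, $T$ as defined already suffices — this is a point I would state carefully). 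Reducing the relation $\dv(f_i) = pD_i$ modulo $\mathfrak{q}$ and specializing at the reduction of $P$, the order of vanishing $\ord_{\mathfrak{q}} f_i(P)$ equals the intersection multiplicity of the section $P$ with the horizontal divisor $D_i$ on a smooth model, which is $p$ times the corresponding quantity for $D_i$; hence $\ord_{\mathfrak{q}} f_i(P) \equiv 0 \pmod p$ for every such $\mathfrak{q}$. This exactly says $f_i(P)\co_{K(P),T} = \mathfrak{a}_{P,i}^p$ for a fractional ideal $\mathfrak{a}_{P,i}$.

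The remaining clause handles $P \in \Supp \dv(f_i)$, where $f_i(P)$ is $0$ or $\infty$ and the statement is vacuous as written; here one replaces $f_i$ by $f_i h_i^p$ for a suitable $h_i \in K(C)$. I would choose $h_i$ so that $\dv(h_i)$ has the right coefficient at $P$ to cancel the contribution of $pD_i$ there: concretely, pick any $h_i \in K(C)$ with $\ord_P(h_i) = -\ord_P(D_i)$ (possible since $C(K) \ne \emptyset$ lets us produce functions with prescribed order at a $K$-rational point, or more generally at any point after base change, but we want $h_i$ over $K$ — I would use Riemann--Roch to produce $h_i \in K(C)$ with the prescribed order at the Galois orbit of $P$). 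Then $\dv(f_i h_i^p) = pD_i + p\dv(h_i)$ has coefficient $0$ at $P$, and the same reduction argument as above — now applied with $D_i + \dv(h_i)$ in place of $D_i$ — gives the $p$-th power ideal relation at $P$.

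The main obstacle I anticipate is the reduction-theoretic step: making precise that, at a prime outside $T$, the valuation of $f_i(P)$ is a multiple of $p$. This requires knowing that the Zariski closures of $D_i$ and of the point $P$ behave well on a smooth (or at least regular, with $D_i$ avoiding the singularities) model of $C$ over $\co_{K(P),T}$, so that "order of vanishing of $f_i$ at the specialization of $P$" is literally $p$ times an integer. The cleanest route is: $f_i \in K(C)^*$ with $\dv_C(f_i) = pD_i$ means that on the smooth model $\mathcal{C} \to \Spec \co_{K(P),T}$ the horizontal part of $\dv(f_i)$ is $p \cdot \overline{D_i}$, and its vertical part is supported on fibers over $T$ only if $f_i$ is an $S$-unit there — which we can arrange by the initial $K^*$-rescaling of $f_i$ (absorbing finitely many bad primes into $T$ if necessary, or noting they are already in $T$). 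Then $\ord_{\mathfrak q} f_i(P) = (\overline P \cdot p\overline{D_i})_{\mathfrak q} = p\,(\overline P \cdot \overline{D_i})_{\mathfrak q}$, giving the claim. I would present this computation at the level of intersection numbers on the arithmetic surface and keep the effectivity implicit, since everything here — the model, the divisors $D_i$, the functions $f_i$, and $T$ — is explicitly computable from the given data.
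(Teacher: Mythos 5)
Your overall skeleton (pick independent $p$-torsion classes $[D_1],\ldots,[D_r]$, take $f_j$ with $\dv(f_j)=pD_j$, then show $p\mid\ord_{\mathfrak q}f_j(P)$ for every prime $\mathfrak q$ of $\co_{K(P),T}$) matches the shape of the lemma, but the step you yourself flag as the main obstacle is where the argument genuinely breaks. First, the reduction claim you lean on is backwards: it is good reduction of $C$ that implies good reduction of $\Jac(C)$, not the other way around. There are curves of genus $\geq 2$ with bad reduction at a prime where the Jacobian has good reduction (e.g.\ when the stable reduction is two elliptic curves meeting in a point), so a prime outside $T$ need not admit a smooth model of $C$, and your intersection computation $\ord_{\mathfrak q}f_i(P)=p\,(\overline P\cdot\overline{D_i})_{\mathfrak q}$ is not available there. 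Enlarging $T$ by the bad primes of $C$ is not an option either, since $T$ is fixed in the statement of Theorem \ref{thmain2} and enters the inequality defining the set of integral points. Second, even at primes of good reduction the divisor of $f_i$ on a model is the horizontal closure of $pD_i$ \emph{plus a vertical part}, contributing an integer $n_{\mathfrak q}$ to $\ord_{\mathfrak q}f_i(P)$; your proposed fix, ``rescale $f_i$ by a constant in $K^*$ so the vertical part is supported over $T$,'' is exactly the nontrivial content of the lemma and is not justified: prescribing the valuations of a single $c\in K^*$ at all primes outside $T$ runs into a class-group obstruction, and at a reducible fiber a scalar shifts all components equally, so one cannot in general force $n_{\mathfrak q}\equiv 0\pmod p$ for every $\mathfrak q\notin T$. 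As written, the proof asserts rather than proves the key divisibility.

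The paper circumvents both problems by working on $J=\Jac(C)$ rather than on a model of $C$. It realizes the classes $c_j$ by $E_j=\Theta-t_{c_j}^*\Theta$, takes $f_j$ with $\dv(f_j)=pE_j$ and $g_j$ with $\dv(g_j)=[p]^*E_j$, and normalizes the scalar once and for all by the identity $f_j(px)=g_j(x)^p$. Then for any $x\in J(\kk)$ one writes $x=py$ and uses the standard fact that $K(y)/K(x)$ is unramified outside $T$ (this is where ``primes above $p$ and of bad reduction of $\Jac(C)$'' is exactly the right set); since $f_j(x)=g_j(y)^p$ is literally a $p$-th power of an element of an extension unramified outside $T$, every valuation of $f_j(x)$ outside $T$ is divisible by $p$, uniformly in $x$, with no model of $C$ and no case analysis at bad fibers. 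Restricting along $\psi:C\hookrightarrow J$ gives $D_j=\psi^*E_j$ with $[D_j]=c_j$, and the case $P\in\Supp\dv(f_j)$ is handled by moving $E_j$ to a linearly equivalent $K$-rational divisor avoiding $P$ and noting $f_jh_j^p(x)=(g_j(y)h_j(x))^p$. If you want to salvage your route, you would need to supply precisely this kind of arithmetic input (unramifiedness of division by $p$, or an equivalent descent-theoretic statement); the geometry of a model of $C$ alone does not yield the $p$-divisibility of the valuations, nor the existence of the correct scaling of $f_j$.
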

\begin{proof}
Let $c_1,\ldots, c_r$ be generators for a subgroup $(\mathbb{Z}/p\mathbb{Z})^r\subset\Jac(C)(K)$.  Let $Q\in C(K)$.  Let $\psi:C\hookrightarrow J=\Jac(C)$ be the $K$-rational embedding given by $P\mapsto [P-Q]$.  Let $\Theta=\psi(C)+\ldots+\psi(C)$ be the theta divisor on $J$.  Let $E_j=\Theta-t_{c_j}^*\Theta$, where $t_{c_j}$ denotes the translation-by-$c_j$ map on $J$.  By the theorem of the square, $pE_j$ is a principal divisor.  Let $f_j \in K(J)$ be such that $\dv(f_j)=pE_j$.  Since $[p]^*E_j\sim pE_j$ is principal, where $[p]$ is multiplication by $p$ on $J$, let $g_j\in K(J)$ be such that $\dv(g_j)=[p]^*E_j$.  It follows immediately that $f_j(px)=\alpha_jg_j(x)^p$ for some constant $\alpha_j\in K^*$.  Replacing $f_j$ by $f_j/\alpha_j$, we can assume that $f_j(px)=g_j(x)^p$.  Let $x,y\in J(\kk)$ with $py=x$.  It is a standard fact that the extension $K(y)/K(x)$ is unramified outside of (places lying above) $T$.  Since $f_j(x)=f_j(py)=g_j(y)^p$ and $K(y)/K(x)$ is unramified outside of $T$, if $x$ is not a pole of $f_j$ it follows that  $f_j(x)\co_{K(x),T}=\mathfrak{a}_j^p$ for some fractional $\co_{K(x),T}$-ideal $\mathfrak{a}_j$.  Consider $f_j|_{C}$, via the embedding $\psi:C\hookrightarrow J$.  Then in particular, for any $P\in C(\kk)$ with $P$ not a pole of $f_j|_{C}$, $f_j|_{C}(P)\co_{K(P),T}=\mathfrak{a}_j^p$ for some fractional $\co_{K(P),T}$-ideal $\mathfrak{a}_j$.  Let $D_j=\psi^*(\Theta-t_{c_j}^*\Theta)$.  Then $\dv(f_j|_{C})=\psi^*(pE_j)=p\psi^*(\Theta-t_{c_j}^*\Theta)=pD_j$.  We conclude the first part of the lemma by noting that $[\psi^*(\Theta-t_{c_j}^*\Theta)]=c_j$ \cite[Th.\ A.8.2.1]{SH}.  So $[D_j]=c_j$.

For the second part of the lemma, we note that by an elementary moving lemma, for any $j$ and any $P\in J(\kk)$, there exists a $K$-rational divisor $E_{j'}$ such that $E_{j'}\sim E_j$ and $P\not\in \Supp E_{j'}$.  Let $h_j\in K(J)$ with $(h_j)=E_{j'}-E_j$.  Then $P\not\in \Supp\dv(f_jh_j^p)$ and $f_j(x)h_j(x)^p=(g_j(y)h_j(x))^p$ on $J$, with $py=x$ as before.  So essentially the same proof as above gives the last assertion of the lemma.
\end{proof}

Let $f_1,\ldots, f_r$ be as in Lemma \ref{fi}.  Let $X$ be the nonsingular projective curve associated to the function field $K(C)\left(\sqrt[p]{f_1},\ldots,\sqrt[p]{f_r}\right)$ and let $\pi:X\to C$ be the natural morphism associated to this field extension of $K(C)$.  It is easy to see that $\deg \pi=p^r$.  This is equivalent to the following lemma.

\begin{lemma}
\label{LKum}
Let $D_1,\ldots, D_r$ be divisors whose divisor classes generate a subgroup $(\mathbb{Z}/p\mathbb{Z})^r\subset\Jac(C)$.  Let $f_1,\ldots, f_r\in K(C)$ be rational functions such that $\dv(f_j)=pD_j$ for all $j$.  Then 
\begin{equation*}
\left[\kk(C)\left(\sqrt[p]{f_1},\ldots,\sqrt[p]{f_r}\right):\kk(C)\right]=p^r.
\end{equation*}
\end{lemma}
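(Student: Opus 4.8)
The plan is to recognize Lemma \ref{LKum} as a Kummer-theory statement and reduce it to a divisor-class computation. First I would note that $\kk$, being an algebraic closure of a number field, has characteristic zero and hence contains a primitive $p$-th root of unity; thus Kummer theory applies to $F=\kk(C)$ and tells us that $\left[\kk(C)\left(\sqrt[p]{f_1},\ldots,\sqrt[p]{f_r}\right):\kk(C)\right]$ equals the order of the subgroup $G$ of the $\mathbb{F}_p$-vector space $F^*/(F^*)^p$ spanned by the classes $\bar f_1,\ldots,\bar f_r$ of $f_1,\ldots,f_r$. Since this degree is in any case at most $p^r$, it suffices to prove that $\bar f_1,\ldots,\bar f_r$ are linearly independent over $\mathbb{F}_p$, i.e.\ that $\dim_{\mathbb{F}_p}G=r$.

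Next I would establish this independence directly. Suppose $\prod_{j=1}^r f_j^{a_j}=g^p$ for some $g\in F^*$ and integers $a_j$ with $0\le a_j<p$. Taking divisors and substituting $\dv(f_j)=pD_j$ yields $p\sum_j a_j D_j=p\,\dv(g)$ in $\Div(C)$; as $\Div(C)$ is a free abelian group I may cancel the $p$ to get $\sum_j a_j D_j=\dv(g)$. Hence $\sum_j a_j D_j$ is principal, and since each $D_j$ has degree zero (because $pD_j$ does) this reads $\sum_j a_j[D_j]=0$ in $\Jac(C)$. By hypothesis the classes $[D_1],\ldots,[D_r]$ generate a copy of $(\mathbb{Z}/p\mathbb{Z})^r$, which is precisely to say that the map $\mathbb{F}_p^r\to\Jac(C)$, $(a_j)_j\mapsto\sum_j a_j[D_j]$, is injective; therefore $a_j\equiv 0\pmod p$ for every $j$, and with $0\le a_j<p$ we conclude $a_j=0$ for all $j$. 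This shows $\dim_{\mathbb{F}_p}G=r$, so the degree equals $p^r$.

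I do not anticipate a serious obstacle here. The only points requiring care are the verification that $\kk$ contains the $p$-th roots of unity, which is what makes the Kummer dictionary literal, and the observation that a $p$-th-power relation among the $f_j$ descends --- because $\Div(C)$ is torsion-free --- to a genuine linear relation among the $[D_j]$ in $\Jac(C)$; both are routine, and the hypothesis on the $D_j$ is used exactly once, to rule out such a relation.
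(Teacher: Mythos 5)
Your proof is correct and follows essentially the same route as the paper's: reduce via Kummer theory to showing $f_1,\ldots,f_r$ generate a subgroup of order $p^r$ in $\kk(C)^*/\kk(C)^{*p}$, then take divisors of a hypothetical relation $\prod_j f_j^{a_j}=g^p$ and invoke the independence of the $p$-torsion classes $[D_j]$ to force $a_j=0$. The extra details you supply (roots of unity in $\kk$, torsion-freeness of $\Div(C)$) are sound and merely make explicit what the paper leaves implicit.
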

\begin{proof}
By Kummer theory, this is equivalent to showing that $f_1,\ldots, f_r$ generate a subgroup of cardinality $p^r$ in $\kk(C)^*/\kk(C)^{p*}$.  Suppose that
\begin{equation}
\label{Kum} 
f_1^{i_1}f_2^{i_2}\cdots f_r^{i_r}=g^p
\end{equation}
for some $g\in \kk(C)^*$ and  integers $0\leq i_1,\ldots, i_r<p$.  Let $\dv(g)=E$, the principal divisor associated to $g$.  Then by \eqref{Kum}, $pE=\sum_{j=1}^r pi_jD_j$.  So $E=\sum_{j=1}^r i_jD_j$ is a principal divisor.  Since $[D_1],\ldots, [D_r]$ are independent $p$-torsion elements in $\Jac(C)$, it follows that $i_j=0$ for all $j$.
\end{proof}

\begin{lemma}
\label{ldeg}
Let $L\supset K$ be a number field.  Let $L'$ be the compositum of the number fields $K(Q)$, where $Q$ ranges over all points $Q\in X(\kk)$ with $\pi(Q)\in C(L)$.  Let $\zeta$ be a generator for the group of roots of unity in $L$.  Then
\begin{equation}
\label{degineq}
[L':L]\leq [L(\sqrt[p]{\zeta}):L]p^{\rk_p \Cl(\co_{L,T})+\rk \co_{L,T}^*}.
\end{equation}
\end{lemma}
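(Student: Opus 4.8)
The plan is to show that $L'$ is contained in a field obtained from $L$ by adjoining a primitive $p$-th root of unity $\zeta_p$ together with $p$-th roots of a small subgroup of $L^*/L^{*p}$, and then to bound the degree of that field via Kummer theory, the Dirichlet unit theorem, and the $p$-torsion of $\Cl(\co_{L,T})$. The first task is to pin down the residue fields $K(Q)$ of points $Q$ in the fibers of $\pi$ over $C(L)$. Since $K(X)=K(C)\bigl(\sqrt[p]{f_1},\dots,\sqrt[p]{f_r}\bigr)$ and the auxiliary functions $h_j$ of Lemma~\ref{fi} lie in $K(C)$, the functions $\sqrt[p]{f_j}\,h_j$ also generate $K(X)$ over $K(C)$. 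Given $P\in C(L)$ and $Q\in\pi^{-1}(P)$, for each $j$ I would set $b_{P,j}=f_j(P)$ when $P\notin\Supp\dv(f_j)$, and otherwise use the second part of Lemma~\ref{fi} to choose $h_j$ with $f_jh_j^p$ regular and nonzero at $P$ and set $b_{P,j}=(f_jh_j^p)(P)$; in both cases $b_{P,j}\in L^*$, the value at $Q$ of $\sqrt[p]{f_j}$ (or of $\sqrt[p]{f_j}\,h_j$) is a $p$-th root of $b_{P,j}$, and by Lemma~\ref{fi} we have $b_{P,j}\co_{L,T}=\mathfrak b_{P,j}^p$ for a fractional $\co_{L,T}$-ideal $\mathfrak b_{P,j}$. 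Hence $K(Q)\subseteq L\bigl(\zeta_p,\sqrt[p]{b_{P,1}},\dots,\sqrt[p]{b_{P,r}}\bigr)$, and taking the compositum over all $P$ and all $Q$ gives $L'\subseteq L\bigl(\zeta_p,\sqrt[p]{V}\bigr)$, where $V\subseteq L^*/L^{*p}$ is the subgroup of classes $\bar b$ with $b\co_{L,T}$ equal to the $p$-th power of a fractional ideal, and $\sqrt[p]{V}$ denotes $p$-th roots of representatives of all elements of $V$.

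Next I would compute $|V|$. The assignment $\bar b\mapsto[\mathfrak b]$, where $b\co_{L,T}=\mathfrak b^p$, fits into an exact sequence
\[
1\longrightarrow \co_{L,T}^*/\co_{L,T}^{*p}\longrightarrow V\longrightarrow \Cl(\co_{L,T})[p]\longrightarrow 1 ,
\]
with surjectivity because $\mathfrak a^p=(b)$ for any $[\mathfrak a]\in\Cl(\co_{L,T})[p]$ and exactness in the middle because $\co_{L,T}^*\cap L^{*p}=\co_{L,T}^{*p}$. By Dirichlet's unit theorem $\co_{L,T}^*\cong\langle\zeta\rangle\times\mathbb Z^{\rk\co_{L,T}^*}$, so $|\co_{L,T}^*/\co_{L,T}^{*p}|=|\langle\zeta\rangle/\langle\zeta\rangle^p|\cdot p^{\rk\co_{L,T}^*}$, while $|\Cl(\co_{L,T})[p]|=p^{\rk_p\Cl(\co_{L,T})}$; hence $|V|=|\langle\zeta\rangle/\langle\zeta\rangle^p|\cdot p^{\rk_p\Cl(\co_{L,T})+\rk\co_{L,T}^*}$. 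Since $\zeta_p\in L(\zeta_p)$, Kummer theory bounds $[L(\zeta_p,\sqrt[p]{V}):L(\zeta_p)]$ by the order of the image of $V$ in $L(\zeta_p)^*/L(\zeta_p)^{*p}$, which is at most $|V|$, so
\[
[L':L]\le[L(\zeta_p,\sqrt[p]{V}):L(\zeta_p)]\,[L(\zeta_p):L]\le|V|\,[L(\zeta_p):L] .
\]

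To finish, writing $w$ for the order of $\zeta$ and taking $\sqrt[p]{\zeta}$ to be a primitive $pw$-th root of unity (so $L(\sqrt[p]{\zeta})=L(\zeta_{pw})\supseteq L(\zeta_p)$), a short case analysis according to whether $p\mid w$ gives $[L(\zeta_p):L]\cdot|\langle\zeta\rangle/\langle\zeta\rangle^p|=[L(\zeta_p):L]\cdot\gcd(p,w)=[L(\sqrt[p]{\zeta}):L]$: when $p\mid w$ both sides equal $p$, and when $p\nmid w$ both sides equal $[L(\zeta_p):L]$. Substituting the formula for $|V|$ into the last display then turns it into \eqref{degineq}.

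I expect the main obstacle to be the first step: correctly identifying $K(Q)$ for a point of a fiber, and especially handling $P\in\Supp\dv(f_j)$, where $f_j(P)$ is undefined and one must replace $f_j$ by $f_jh_j^p$ via the second half of Lemma~\ref{fi}, checking that $f_jh_j^p$ is regular and nonzero at $P$ so that $b_{P,j}$ is a genuine element of $L^*$ with $b_{P,j}\co_{L,T}$ a $p$-th power. Everything after the containment $L'\subseteq L(\zeta_p,\sqrt[p]{V})$ is standard algebraic number theory. We work throughout in the situation of Lemma~\ref{fi}, so in particular $C(K)\neq\emptyset$.
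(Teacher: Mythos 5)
Your proposal is correct and follows essentially the same route as the paper: both arguments rest on Lemma \ref{fi} together with the observation that each relevant value $b_{P,j}$ generates an $\co_{L,T}$-ideal that is a $p$-th power, so that all residue fields $K(Q)$ lie in a fixed extension of $L$ controlled by $\co_{L,T}^*/\co_{L,T}^{*p}$ and the $p$-torsion of $\Cl(\co_{L,T})$. The only difference is presentational: you count via the exact sequence for the subgroup $V\subset L^*/L^{*p}$ and Kummer theory over $L(\zeta_p)$ (and make the $[L(\sqrt[p]{\zeta}):L]$ bookkeeping explicit), whereas the paper adjoins $p$-th roots of explicit generators $\beta_1,\ldots,\beta_t,u_1,\ldots,u_{t'},\zeta$ and decomposes $\mathfrak{a}_j=(\alpha)\prod_s\mathfrak{b}_s^{c_s}$ directly, which is the same computation.
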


\begin{proof}
We will work throughout with (fractional) $\co_{L,T}$-ideals.  Let $t=\rk_p \Cl(\co_{L,T})$.  Let $G=\left\{[\mathfrak{a}]\in\Cl(\co_{L,T})\mid [\mathfrak{a}]^p=1 \right\}$, a subgroup of $\Cl(\co_{L,T})$.  Then $G\cong (\mathbb{Z}/p\mathbb{Z})^{t}$.  Let $\mathfrak{b}_j$, $j=1,\ldots, t$, be ($\co_{L,T}$-)ideals whose ideal classes generate $G$.  Then for each $j$, $\mathfrak{b}_j^p=(\beta_j)$ for some $\beta_j\in L$.  Let $t'=\rk \co_{L,T}^*$.  Let $u_1,\ldots, u_{t'},\zeta$ be generators for $\co_{L,T}^*$.  Let $L'=L(\sqrt[p]{\beta_1},\ldots,\sqrt[p]{\beta_{t}},\sqrt[p]{u_1},\ldots,\sqrt[p]{u_{t'}},\sqrt[p]{\zeta})$.  Note that
\begin{equation*}
[L':L]\leq [L(\sqrt[p]{\zeta}):L]p^{t+t'}.
\end{equation*}
Let $Q\in X(\kk)$ with $P=\pi(Q)\in C(L)$.  We now show that $K(Q)\subset L'$.

First assume that $P$ is not a zero or pole of any $f_j$.  Then it follows from the definitions of $\pi$ and $X$ that $K(Q)=L(x_1,\ldots, x_r)$ for some choice of $x_j$ satisfying $x_j^p=f_j(P)$, $j=1,\ldots, r$.  We need to show that $x_j\in L'$ for all $j$.  By Lemma \ref{fi}, $(x_j^p)=(f_j(P))=\mathfrak{a}_j^p$ for some $\co_{L,T}$-ideal $\mathfrak{a}_j$.  Since $[\mathfrak{a}_j]\in G$, 
\begin{equation*}
\mathfrak{a}_j=(\alpha)\prod_{s=1}^{t}\mathfrak{b}_s^{c_s}
\end{equation*}
for some integers $c_s$ and some element $\alpha\in L$.  Therefore,
\begin{equation*}
(x_j^p)=\mathfrak{a}_j^p=(\alpha^p)\prod_{s=1}^{t}\left(\beta_s^{c_s}\right).
\end{equation*}
So $x_j^p=u\alpha^p\prod_{s=1}^{t}\beta_s^{c_s}$ for some unit $u\in \co_{L,T}^*$. Therefore,  $x_j=\alpha\sqrt[p]{u}\prod_{s=1}^{t}\sqrt[p]{\beta_s^{c_s}}$ for some choice of the $p$-th roots.  So $x_j\in L'$ for all $j$ as desired.

If $P$ is a zero or pole of some $f_j$ then the proof is similar, except we use the second part of Lemma \ref{fi} and the fact that $K(Q)=L(x_1,\ldots, x_r)$ for some choice of $x_j$ satisfying $x_j^p=f_jh_j^p(P)$, $j=1,\ldots, r$.
\end{proof}

\begin{proof}[Proof of Theorem \ref{thmain2}]
We apply Runge's method to the curve $X$ and the rational function $\phi\circ \pi$.  It is easily seen that $\pi$ is unramified.  Thus, since $\pi$ has $\deg\pi=p^r$, $\phi\circ \pi$ has $p^rn_\infty$ distinct poles over some number field $K'$.
By Theorem \ref{Bomb}, the set of points
\begin{equation*}
\bigcup_{\substack{L'\supset K',S\\|S|<p^rn_\infty}}\{Q\in X(L')\mid \phi(\pi(Q))\in \co_{L',S}\}
\end{equation*}
is finite and can be effectively determined.

Now consider $L\supset K$, $S$, and $P\in C(L)$ with $\phi(P)\in \co_{L,S}$.  Let $Q\in \pi^{-1}(P)$.  Then by Lemma \ref{ldeg} (using also that the poles of $\phi$ were all defined over $K\subset L$), 
\begin{equation*}
[K'(Q):L]\leq [L(\sqrt[p]{\zeta}):L]p^{\rk_p \Cl(\co_{L,T})+\rk \co_{L,T}^*}.
\end{equation*}
For $L'\supset L$, we let $S_{L'}$ be the set of places of $L'$ lying above places of $S$.  Trivially, we have $|S_{K'(Q)}|\leq |S|[K'(Q):L]$ and $\phi(\pi(Q))\in \co_{L,S}\subset \co_{K'(Q),S}$.  So combining the above, we see that the set of points
\begin{equation*}
\bigcup_{\substack{L\supset K,S\\|S|[L(\sqrt[p]{\zeta}):L]p^{\rk_p \Cl(\co_{L,T})+\rk \co_{L,T}^*}<p^rn_\infty}}\{P\in C(L)\mid \phi(P)\in \co_{L,S}\}
\end{equation*}
is finite and can be effectively determined.
\end{proof}

\section{Superelliptic curves}
\label{ssuper}

The extension of Runge's theorem proven in the last section (Theorem \ref{thmain2}) is only useful for curves $C$ which have a large-rank rational torsion subgroup in $\Jac(C)$.  A natural class of curves which have this property are superelliptic curves $C$ defined by
\begin{equation*}
y^p=f(x)=\prod_{i=1}^rf_i(x), \quad f_1,\ldots,f_r\in K[x],
\end{equation*}
where $f_1,\ldots, f_r$ are $p$-th power free, pairwise coprime, nonconstant polynomials.
Indeed, if $p\nmid \deg f$, it is easily shown that $\rk_p \Jac(C)(K)_{\rm{tors}}\geq r-1$.  An explicit version of Theorem \ref{thmain2} for superelliptic curves is given in the following result.  For a polynomial $f$, we let $D_f$ be the discriminant of $f$, $|f|_v$ be the maximum absolute value of the coefficients of $f$ with respect to $v$, and $H(f)$ be the absolute multiplicative height of the coefficients of $f$ as a point in projective space.  For a number field $K$, we let $D_K$ be the absolute discriminant of $K$.  For an element $\alpha\in K^*$, we let $\omega_K(\alpha)$ denote the number of distinct prime ideals in the ideal factorization of $\alpha\co_K$ (setting $\omega(n)=\omega_\mathbb{Q}(n))$.  For a set of places $S$ of $K$, we let $S_\infty$ denote the set of archimedean places in $S$.
\begin{theorem}
\label{sr}
Let $p$ be a prime.  Let $f_1,\ldots, f_r\in \co_K[x]$ be monic, pairwise coprime, nonconstant polynomials.  Suppose that each $f_i$, $i=1,\ldots, r$, is an $n$th power, $(n,p)=1$, $n\geq 1$, of a polynomial $g_i$ which has no repeated roots.  For $a\neq 0$, let $C_a$ be the superelliptic affine plane curve defined by
\begin{equation*}
ay^p=f(x)=\prod_{i=1}^rf_i(x).
\end{equation*}
Let $d=\deg f$.  Let $\zeta_p$ be a primitive $p$-th root of unity.  Let $\epsilon=0$ if $p$ divides $\deg f_i$ for $i=1,\ldots, r$, $\epsilon=1$ if $(d,p)=1$, and $\epsilon=2$ if $p$ divides $d$ but $p$ doesn't divide $\deg f_i$ for some $i$.  Then the set of integral points
\begin{equation*}
R=\bigcup_{\substack{K'\supset K,S,a\in \co_{K',S}\\\omega_{K'}(aD_f)+|S_{\infty}|+\log_p |S|+\rk_p \Cl(\co_{K'})+\epsilon<r}}\{P\in C_a(\co_{K',S})\}
\end{equation*}
is finite.  Let $d'=\deg \prod_{i=1}^rg_i$.  Specifically, for each point $P\in R$, we have the bound
\begin{equation}
\label{HB}
H(x(P))\leq \left|D_{K(\zeta_p)}\right|^{d'p^{2r}}\left(4d'p^3\prod_{i=1}^r H(g_i)\right)^{d'^2p^{3r}}.
\end{equation}
\end{theorem}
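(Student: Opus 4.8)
The plan is to deduce Theorem~\ref{sr} from the covering form of Runge's method (Theorem~\ref{thmain2}) applied to $C_a$ with $\phi=x$, carried out explicitly, and then to read off \eqref{HB} from an effective version of Bombieri's uniform theorem of the type in \cite{Hil}, \cite{Wal}. The first task is to locate the rational $p$-torsion. Since $f_i=g_i^n$ with the $g_i$ separable, pairwise coprime, and $(n,p)=1$, on the nonsingular model of $C_a$ the function $g_i(x)$ has divisor $pF_i-(\deg g_i)\sum_{j=1}^{n_\infty}\infty_j$, where $F_i$ is the reduced fibre over the roots of $g_i$ and $\infty_1,\dots,\infty_{n_\infty}$ are the points at infinity, with $n_\infty=p$ if $p\mid d$ and $n_\infty=1$ otherwise (as $p$ is prime). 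From the functions $g_i(x)g_{i_0}(x)^{-1}$ when $p\mid\deg g_i$ for all $i$, or $g_i(x)^{\deg g_{i_0}}g_{i_0}(x)^{-\deg g_i}$ with $p\nmid\deg g_{i_0}$ otherwise, corrected by the points at infinity, one obtains $r-1$ independent $p$-torsion classes, so $\rk_p\Jac(C_a)(K)_{\tors}\ge r-1$. A short case analysis, in which the three values of $\epsilon$ record whether $x=\infty$ contributes a single tame point ($p\nmid d$, $\epsilon=1$), $p$ points over which the relevant cover is unramified ($p\mid\deg f_i$ for all $i$, $\epsilon=0$), or $p$ points over which it necessarily ramifies ($p\mid d$ but $p\nmid\deg f_i$ for some $i$, $\epsilon=2$), then shows that the hypothesis $\omega_{K'}(aD_f)+|S_\infty|+\log_p|S|+\rk_p\Cl(\co_{K'})+\epsilon<r$ suffices for the numerical hypothesis of Theorem~\ref{thmain2} over $K'(\zeta_p)$.

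Next I would make the cover explicit and quantify the passage to it. Take $X_a$ to be the nonsingular model of $\kk(C_a)\bigl(\sqrt[p]{g_1(x)},\dots,\sqrt[p]{g_{r-1}(x)}\bigr)$ (equivalently, the $f_i(x)$ may be used, as $(n,p)=1$); by Kummer theory this is a cover of $C_a$ of degree $p^{r-1}$, unramified away from the $\infty_j$ (and unramified there too unless $\epsilon=2$), and $\phi\circ\pi=x$. The key point where the superelliptic shape improves on the general argument is the analogue of Lemma~\ref{fi}: for an $S$-integral point $P$ of $C_a$ one does not need the unramifiedness of a $p$-division field, because the identity $\prod_i g_i(x(P))^n=a\,y(P)^p$ together with $\gcd(g_i,g_j)\mid D_f$ forces $\ord_{\mathfrak q}(g_i(x(P)))\equiv 0\pmod p$ directly for every prime $\mathfrak q\notin S$ with $\mathfrak q\nmid aD_f$, \emph{including} those above $p$. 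Hence, with $T=S\cup S_\infty\cup\{\mathfrak q\mid aD_f\}$, each $g_i(x(P))$ generates a $p$-th power of a fractional $\co_{K'(\zeta_p),T}$-ideal, and the argument of Lemma~\ref{ldeg} bounds the field $L$ over which a point $Q\in\pi^{-1}(P)$ is defined by $[L:K']\le [K'(\zeta_p):K']\,p^{\,1+\rk_p\Cl(\co_{K'(\zeta_p),T})+\rk\co_{K'(\zeta_p),T}^*}$, the ranks here being controlled by $\rk_p\Cl(\co_{K'})$, $|S_\infty|$, $|S|$ and $\omega_{K'}(aD_f)$. Since on $X_a$ the function $x$ has $p^{r-1}n_\infty$ poles, whose field of definition is an extension of $K(\zeta_p)$ unramified outside $T$ (here $\epsilon$ re-enters), the uniform Runge inequality $|S_L|<p^{r-1}n_\infty$ on $X_a$ — which, after bounding $|S_L|\le|S|[L:K']$ and taking logarithms base $p$, follows from the displayed hypothesis — yields finiteness, reproving Theorem~\ref{thmain2} in this case with an explicit description of $L$.

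For the explicit bound \eqref{HB}, I would then invoke an effective form of Bombieri's theorem~\ref{Bomb} for the curve $X_a$, the function $x$, and the field $L$: such bounds depend polynomially on the degree of $X_a$ over the $x$-line (here $\le d'p^{\,r-1}$ with $d'=\deg\prod_i g_i$), exponentially in this degree on the heights of the polynomials cutting out $X_a$ (the $g_i$, whence the factor $\prod_i H(g_i)$; there is no dependence on $a$, since $x$ does not involve $a$), and on the logarithmic discriminant of the field of definition of the points of $X_a$ at infinity, namely $K(\zeta_p)$ up to a factor absorbed into the $p$-power exponents. Tracking these dependences — the degree of the tower $X_a\to C_a\to\mathbb{P}^1$, the heights of the associated resultant and Riemann--Roch data, and $|D_{K(\zeta_p)}|$ — is intended to yield the shape $|D_{K(\zeta_p)}|^{d'p^{2r}}\bigl(4d'p^{3}\prod_i H(g_i)\bigr)^{d'^2p^{3r}}$.

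The main obstacle is exactly this last bookkeeping. One must produce an explicit model of $X_a$ with height controlled by the $H(g_i)$, give a genuinely quantitative form of Lemma~\ref{ldeg} — explicit bounds on $[L:K']$ \emph{and} on $|D_L|$ in terms of $|D_{K(\zeta_p)}|$, $\omega_{K'}(aD_f)$, $|S|$ and $p$ — and check that the Runge inequality on $X_a$ survives base change to $L$, which is what pins down the precise hypothesis (in particular the appearance of $\log_p|S|$ rather than $|S|$, and the exact form of $\epsilon$). The delicate case is $\epsilon=2$, where the cover ramifies at infinity and one must argue with the true, smaller number of irreducible components of $(x)_\infty$ on $X_a$. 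The underlying geometry is routine; it is the arithmetic of bad primes, fields of definition, and discriminants through the cover that demands care.
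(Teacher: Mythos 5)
Your reduction step is essentially the paper's: the field $L=K'(\sqrt[p]{g_1(\alpha)},\ldots,\sqrt[p]{g_r(\alpha)},\zeta_p)$, the direct observation that $(g_i(\alpha))$ is the $p$-th power of a fractional $\co_{K',S'}$-ideal with $S'$ built from $S_\infty$ and the divisors of $aD_f$ (so no appeal to unramifiedness of $p$-division fields is needed), the Lemma \ref{ldeg}-style bound $[L:K']\leq p^{\rk_p\Cl(\co_{K',S'})+\rk\co_{K',S'}^*+1}$, the estimate $t\leq|S|[L:K']$, and the $\epsilon=2$ trick of using the ratios $g_i^{\deg g_r}(\alpha)/g_r^{\deg g_i}(\alpha)$ when $p\mid d$ but $p\nmid\deg g_r$ all appear, in the same form, in the paper's proof of Theorem \ref{sr}. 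The difference is what you feed this into: the paper feeds it into Theorem \ref{mainth}, whose sole purpose is to supply the explicit, uniform height bound, and then \eqref{HB} drops out with $B\leq 2\prod_i H(g_i)$ and $t<p^{r-\epsilon}$.

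The genuine gap is in your replacement for that step. You propose to obtain \eqref{HB} from "an effective version of Bombieri's theorem of the type in \cite{Hil}, \cite{Wal}" applied to $X_a$, $x$, and $L$, and you yourself note that this requires bounding $[L:K']$ \emph{and} $|D_L|$ in terms of $|D_{K(\zeta_p)}|$, $\omega_{K'}(aD_f)$, $|S|$ and $p$. That cannot work: in the union defining $R$, the field $K'$ (hence $|D_{K'}|\leq|D_L|$), the element $a$, and the primes dividing $aD_f$ are completely unconstrained (only their \emph{number} is bounded by the hypothesis), so $|D_L|$ is unbounded, while \eqref{HB} depends only on $K$, $p$, $r$, $d'$ and $\prod_i H(g_i)$ — with no dependence on $K'$, $S$ or $a$. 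The effective Runge bounds of \cite{Hil} and \cite{Wal} (and Theorem \ref{thmain2}, which is purely qualitative) are stated for a fixed ground field and ring of $S$-integers and carry dependence on that field's invariants, so they do not give the required uniformity. The paper avoids this exactly through the construction inside Theorem \ref{mainth}: the auxiliary functions in $L(\delta(D_\infty-E)-E)$ are built from Puiseux expansions whose coefficients lie in $\co_K[\zeta_p]$ up to powers of $p$ (Lemma \ref{lconv}), the Siegel-lemma step (Bombieri--Vaaler) is performed over $K(\zeta_p)$ so only $|D_{K(\zeta_p)}|$ enters the height of the $g_j$, and the large fields $L$, $K'$ enter the final product-formula estimate only through $t=|T|$, which the hypothesis caps at $p^{r-\epsilon}$. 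So either you invoke Theorem \ref{mainth} directly — in which case your argument becomes the paper's proof — or you must reprove its content; as written, the quantitative heart of \eqref{HB} is missing and the proposed substitute would not yield a bound of that shape.
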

\begin{remark}
The bound \eqref{HB} is a somewhat crude estimate.  For $P\in C_a(\co_{K',S})$ such that $r$ is large compared to $\omega_{K'}(aD_f)+|S_{\infty}|+\log_p |S|+\rk_p \Cl(\co_{K'})$, the inequality \eqref{HB} can be improved.  Such improved bounds are easily calculated from Theorem \ref{mainth} and an examination of the proof of Theorem \ref{sr}. 
\end{remark}
In many cases the bounds in Theorem \ref{sr} are small enough that, combined with other techniques, it is practical to find all $S$-integral points on certain superelliptic curves.  This is illustrated in Section \ref{sper}, where we compute certain sets of $S$-integral points on some curves related to the problem of squares in products of terms in an arithmetic progression.

\begin{theorem}
\label{wth}
Let $p$ be a prime.  Let $f_1,\ldots, f_r\in \mathbb{Z}[x]$ be monic, pairwise coprime, nonconstant polynomials with no repeated roots.  Let $d=\deg \prod_{i=1}^rf_i$ and $H=\max_i H(f_i)$.  Let $\pfree(n)$ denote the $p$-th-power-free part of an integer $n$.  Then for $x\in \mathbb{Z}$,
\begin{equation}
\label{weq}
\omega\left(\prod_{i=1}^r \pfree(f_i(x))\right)\geq r-1
\end{equation}
if
\begin{equation*}
|x|>(8dp^3H)^{2d^2p^{r+1}}.
\end{equation*}
If $p\mid \deg f_i$, $i=1,\ldots, r$, then the same statement holds with \eqref{weq} replaced by
\begin{equation}
\label{weq2}
\omega\left(\prod_{i=1}^r \pfree(f_i(x))\right)\geq r.
\end{equation}
\end{theorem}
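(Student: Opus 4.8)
The plan is to prove the contrapositive. Fix $x\in\mathbb{Z}$ with $f_i(x)\neq 0$ for all $i$ (the finitely many roots of the $f_i$ have absolute value at most $1+H$, far below the claimed bound) and suppose
$$\omega\Bigl(\prod_{i=1}^{r}\pfree(f_i(x))\Bigr)\le r-2$$
(respectively $\le r-1$ when $p\mid\deg f_i$ for every $i$); I must bound $|x|$. Write $f_i(x)=a_iz_i^{p}$ with $a_i=\pfree(f_i(x))\in\mathbb{Z}$ and $z_i\in\mathbb{Z}$, and let $\Sigma$ be the set of primes dividing $\prod_{i=1}^{r}a_i$. By hypothesis $|\Sigma|\le r-2$ (resp.\ $\le r-1$), and each $a_i$ is $p$-th power free and supported on $\Sigma$.

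Next I would build a fixed Kummer covering of the line. Because $f_1,\dots,f_r$ are monic, pairwise coprime and separable, their zero divisors on $\mathbb{P}^{1}_{x}$ are disjoint and reduced, so $f_1(x),\dots,f_r(x)$ are multiplicatively independent modulo $p$-th powers in $\overline{\mathbb{Q}}(x)^{*}$; hence the smooth projective curve $X$ with function field $\mathbb{Q}(x)\bigl(\sqrt[p]{f_1(x)},\dots,\sqrt[p]{f_r(x)}\bigr)$ is geometrically irreducible of degree $p^{r}$ over $\mathbb{P}^{1}_{x}$ (it is defined by the equations $w_i^{p}=f_i(x)$, whose coefficients have absolute value at most $H$), and $X$ together with $\phi=x\in\mathbb{Q}(X)$ depends only on the $f_i$ and $p$. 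The natural lift $Q=\bigl(x,\ \sqrt[p]{a_1}\,z_1,\ \dots,\ \sqrt[p]{a_r}\,z_r\bigr)$ of our integer point lies in $X(L)$ with $L=\mathbb{Q}\bigl(\sqrt[p]{a_1},\dots,\sqrt[p]{a_r}\bigr)$, and since the $a_i$ are $\Sigma$-smooth and $p$-th power free we have $L\subseteq\mathbb{Q}\bigl(\{\sqrt[p]{\ell}:\ell\in\Sigma\},\ \sqrt[p]{-1}\bigr)$. Thus the number $|S|$ of archimedean places of $L$ is at most $p^{|\Sigma|}$ when $p$ is odd and at most $2^{|\Sigma|}$ when $p=2$ (using that $\sqrt{-1}\in L$ forces $L$ totally imaginary), and $[L:\mathbb{Q}]\le p^{|\Sigma|}$ (resp.\ $2^{|\Sigma|+1}$). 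This is the crux of the argument, and the reason $D_f$ does not enter the final bound: the field over which $Q$ is defined depends only on $|\Sigma|$, because $f_i(x)$ differs from the $\Sigma$-smooth integer $a_i$ by a $p$-th power — no use of class groups, units, or bad reduction of a Jacobian (as in Theorem~\ref{sr}) is needed here.

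Now I would analyze $(\phi)_{\infty}$ on $X$ and apply the uniform Runge theorem. The cover $X\to\mathbb{P}^{1}_{x}$ is unramified over $x=\infty$ precisely when $p\mid\deg f_i$ for all $i$, in which case $(\phi)_{\infty}$ has $p^{r}$ geometric components; otherwise it has ramification index $p$ there and $(\phi)_{\infty}$ has $p^{r-1}$ geometric components. In both cases monicity of the $f_i$ forces each of these components to be defined over $\mathbb{Q}(\zeta_p)$, and $\operatorname{Gal}(\mathbb{Q}(\zeta_p)/\mathbb{Q})$ permutes them with orbits of size at most $p-1$; hence the number $r_L$ of $L$-irreducible components of $(\phi)_{\infty}$ satisfies $r_L\ge p^{r-1}/(p-1)$ (resp.\ $p^{r}/(p-1)$). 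Comparing this with the bound on $|S|$ from the previous step, one checks in every case that $|S|<r_L$ — this is exactly where $|\Sigma|\le r-2$ (resp.\ $\le r-1$) is used, essentially optimally. Since $\phi(Q)=x\in\mathbb{Z}\subseteq\co_{L}=\co_{L,S}$, the point $Q$ lies in the finite, effectively computable set furnished by Theorem~\ref{Bomb} for the fixed pair $(X,\phi)$; hence so does the set of admissible values $\phi(Q)=x$. Feeding the explicit form of Bombieri's Runge bound (cf.\ \cite{Hil},\cite{Wal}) the height and degree data of $X$ together with the bounds on $[L:\mathbb{Q}]$ and $|S|$, and simplifying, yields $|x|\le(8dp^{3}H)^{2d^{2}p^{r+1}}$.

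The main obstacle is the middle step: one must carry out, case by case according to which $\deg f_i$ are divisible by $p$, the precise local analysis of $X\to\mathbb{P}^{1}$ over $x=\infty$ (number of components of $(\phi)_{\infty}$, their fields of definition, hence $r_L$), verify the tight inequality $|S|<r_L$ — including the subtlety at $p=2$, where $\sqrt{-1}$ may enter $L$ — and then track constants carefully through the quantitative Runge estimate to reach the clean exponent $2d^{2}p^{r+1}$ rather than a weaker bound. One could alternatively run the argument through a Kummer covering of the superelliptic curve $y^{p}=f(x)$, using the same observation to keep the residue field of the fibre over our point small; this stays closer to the proof of Theorem~\ref{sr}, with comparable bookkeeping.
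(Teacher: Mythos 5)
Your overall skeleton is the same as the paper's: argue the contrapositive, lift the point to the Kummer cover $w_i^p=f_i(x)$ (the curve underlying Theorem \ref{mainth}), and exploit the key observation that since $f_i(x)$ differs from its $p$-th-power-free part by a $p$-th power, the field generated by the roots $\sqrt[p]{f_i(x)}$ lies inside $\mathbb{Q}(\sqrt[p]{q_1},\ldots,\sqrt[p]{q_\omega},\zeta_{2p})$ where $q_1,\ldots,q_\omega$ are the primes dividing $\prod_i\pfree(f_i(x))$, so that the number of relevant places is at most roughly $p^{\omega}$ while the fiber over $x=\infty$ has $p^{r-1}$ (resp.\ $p^{r}$) points. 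That is exactly how the paper bounds $t$ before invoking its quantitative result, and your verification that $|S|<r_L$ (including the $p=2$ and $p\mid\deg f_i$ cases) is sound, so the qualitative finiteness/effectivity part of your argument is correct.

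The genuine gap is the final quantitative step. Theorem \ref{wth} asserts a specific threshold $(8dp^3H)^{2d^2p^{r+1}}$, and your proposal obtains it by ``feeding the explicit form of Bombieri's Runge bound (cf.\ \cite{Hil},\cite{Wal})'' into the setup. No such off-the-shelf explicit bound exists in the uniform setting you need (varying field $L$ and varying $S$, for this particular cover); those references treat classical Runge situations and only supply techniques, which is precisely why the paper proves Theorem \ref{mainth} from scratch: auxiliary functions in $L(\delta(D_\infty-E)-E)$ vanishing at the points at infinity singled out by the places in $T'$, height control via Puiseux-coefficient estimates (the bounds $p^{2k-1}c_{i,j,k}\in\co_K[\zeta_p]$ and $|a_k|_v<2^{\delta/p}(H_v+1)^k$), the Bombieri--Vaaler Siegel lemma, base-point-freeness of the linear system for the chosen $\delta$, and the product formula. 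The paper's proof of Theorem \ref{wth} is then a direct substitution into the refined inequalities \eqref{h1}/\eqref{h2} with $t\leq\frac12 p^{\omega}\phi(2p)$, $B\leq 2H$, $|D_{\mathbb{Q}(\zeta_p)}|=p^{p-2}$, and $\delta\leq 2(p-1)(d-1)-1$; note that even the crude displayed bound $(2dp^3B)^{d^2p^{3r}}$ of Theorem \ref{mainth} would not give the exponent $2d^2p^{r+1}$, so the constant-tracking with the small value of $t$ is an essential part of the proof, not routine bookkeeping. Since you flag this step as an ``obstacle'' rather than carry it out, the stated bound remains unproven in your argument; to close the gap you should simply apply Theorem \ref{mainth} (whose field $L$ already includes $\zeta_p$, matching the paper's count of archimedean places) rather than re-deriving the Runge condition on the cover and appealing to an unavailable explicit version of Theorem \ref{Bomb}.
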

Again, a better, but more complicated estimate in Theorem \ref{wth} follows from the proof and Theorem \ref{mainth}.
\begin{remark}
Inequalities \eqref{weq} and \eqref{weq2} are likely to be sharp for infinitely many values of $x$, at least for certain sets of polynomials.  Indeed, if Schinzel's ``Hypothesis $H$" \cite{Sch} is true, most sets of polynomials $\{f_2,\ldots, f_r\}$ will take prime values simultaneously at infinitely many points $x=n^p$.  Thus, if we take $f_1=x$, at such points the bound in inequality \eqref{weq} will be achieved.  Similarly, in general, the bound in inequality \eqref{weq2} should also be attained infinitely often.
\end{remark}

Theorems \ref{sr} and \ref{wth}, whose proofs we postpone until the end of the section, will be a consequence of the following general result.
\begin{theorem}
\label{mainth}
Let $p$ be a prime and $\zeta_p$ a primitive $p$-th root of unity.  Let $K$ be a number field.  Let $f_1,\ldots, f_r\in \co_K[x]$ be monic, pairwise coprime, nonconstant polynomials with no repeated roots.  Let $K'\supset K$ be a number field and $S$ a finite set of places of $K'$ containing the archimedean places.  Let $\alpha\in \co_{K',S}$.  Let $\beta_i=\sqrt[p]{f_i(\alpha)}$, $i=1,\ldots, r$.  Let $L=K'(\beta_1,\ldots,\beta_r,\zeta_p)$.  Let $T$ be the set of places of $L$ lying above places of $S$ and let $t=|T|$.   Let $S_\infty$ denote the set of archimedean places of $S$.  Let $d_i=\deg f_i$ and $d=\sum_{i=1}^rd_i$.  Let
\begin{align*}
H_v&=\max_i |f_i|_v, \quad v\in S_\infty,\\
B&=\prod_{v\in S_\infty} \left(H_v+1\right)^{[K'_v:\mathbb{Q}_v]/[K':\mathbb{Q}]},\\
\delta&= \left\lceil \frac{p^{r-1}((p-1)(d-1)-2)+t+2}{p^{r-1}-t}\right\rceil,\\
\delta'&=\left\lceil \frac{p^{r-1}((p-1)(d-2)-2)+t+2}{p^r-t}\right\rceil,\\
m&=p^{r-1}\left(\delta+1-\frac{(p-1)(d-1)}{2}\right),\\
m'&=p^{r-1}\left(p\delta'+1-\frac{(p-1)(d-2)}{2}\right).
\end{align*}
If $p\nmid d_i$ for some $i$ and $t<p^{r-1}$, then
\begin{align}
\label{h1}
H(\alpha)&\leq \left|D_{K(\zeta_p)}\right|^{mp/(2[K(\zeta_p):\mathbb{Q}])} 2^{(\delta+1)(\delta t+1)+2p}p^{(\delta+1)(t(2\delta-p)+2)}m^{p((\delta+1)t/2+1)}B^{\delta(\delta+1)t+2\delta+1}\\
&< \left|D_{K(\zeta_p)}\right|^{dp^{2r}}(2dp^3B)^{d^2p^{3r}}.\notag
\end{align}
If $p\mid d_i$ for $i=1,\ldots, r$ and $t<p^{r}$, then
\begin{align}
\label{h2}
H(\alpha)&\leq \left|D_{K(\zeta_p)}\right|^{m'/(2[K(\zeta_p):\mathbb{Q}])} 2^{(\delta'+1)(\delta' t+1)+1}p^{(\delta'+1)(t(2\delta'-1)+2)}m'^{(\delta'+1)t/2+1}B^{\delta'(\delta'+1)t+2\delta'+1}\\
&< \left|D_{K(\zeta_p)}\right|^{dp^{2r}}(2dp^3B)^{d^2p^{3r}}.\notag
\end{align}
\end{theorem}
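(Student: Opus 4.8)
The plan is to apply Runge's method, in the effective form underlying Theorem~\ref{Bomb}, to the Kummer cover of $\mathbb{P}^1$ obtained by adjoining $p$-th roots of the $f_i$, with the integral point supplied by $\alpha$. Set $F=K(\zeta_p)$. First I would construct, over $F$, the nonsingular projective model $X$ of the function field $F(x)\big(\sqrt[p]{f_1},\ldots,\sqrt[p]{f_r}\big)$ and the associated morphism $\pi\colon X\to\mathbb{P}^1_x$; it has degree exactly $p^r$, because the $f_i$, being monic, squarefree and pairwise coprime, are independent modulo $p$-th powers in $\kk(x)^*$ (the divisor argument of Lemma~\ref{LKum}). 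A Riemann--Hurwitz computation then pins down the genus of $X$: $\pi$ is tamely ramified of index $p$ over each of the $d$ simple roots of $\prod_i f_i$, contributing $d\,p^{r-1}(p-1)$ to the different, while over $x=\infty$ one of two things happens. If $p\nmid d_i$ for some $i$, then $\pi$ is ramified of index $p$ over $\infty$, there are $N:=p^{r-1}$ points above $\infty$, and $2g_X-2=p^{r-1}\big((p-1)(d-1)-2\big)$; if $p\mid d_i$ for every $i$, then $\pi$ is unramified over $\infty$, there are $N:=p^{r}$ points above $\infty$, and $2g_X-2=p^{r-1}\big((p-1)(d-2)-2\big)$. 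Since the $f_i$ are monic, the leading behaviour at infinity involves only roots of unity, so these $N$ points are already rational over $F$; write $D_\infty$ for the reduced effective $F$-rational divisor they form, which has $N$ geometrically irreducible components, all remaining irreducible over every extension of $F$.

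Next I would produce the integral point. Because $f_i\in\co_K[x]$, the affine curve $X\setminus D_\infty$ extends to the $\co_F$-scheme $\mathrm{Spec}\,\co_F[x,y_1,\ldots,y_r]/(y_1^p-f_1(x),\ldots,y_r^p-f_r(x))$, and the tuple $P_0=(\alpha,\beta_1,\ldots,\beta_r)$ is a point of $X(L)$ for $L=K'(\beta_1,\ldots,\beta_r,\zeta_p)\supseteq F$. From $\beta_i^p=f_i(\alpha)\in\co_{K',S}\subseteq\co_{L,T}$ one obtains $\beta_i\in\co_{L,T}$, so $P_0$ is an $\co_{L,T}$-point of that scheme and hence a $T$-integral point of $X\setminus D_\infty$. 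As $D_\infty$ has $N$ components over $L$ and $t=|T|<N$ by hypothesis ($t<p^{r-1}$ in the first case, $t<p^{r}$ in the second), this is precisely the setup of Runge's method, Theorem~\ref{Bomb}; what remains is to run that method with all constants made explicit.

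For the explicit bound I would carry out Bombieri's argument quantitatively. Let $\delta$ (resp.\ $\delta'$) be the least positive integer with $(N-t)\delta\ge 2g_X+t$; substituting the genus formula shows this is exactly the ceiling appearing in the statement (and $N-t>0$ is exactly the hypothesis), while $m=h^0(\delta D_\infty)=N\delta+1-g_X$ unwinds, after substituting $g_X$, to the displayed $m$ (resp.\ $m'$). Using effective Riemann--Roch over $\co_F$ together with the effective Nullstellensatz (as in the proof of Lemma~\ref{W}), I would fix a basis $g_1,\ldots,g_m$ of $L(\delta D_\infty)$ whose coefficients have height bounded explicitly in terms of $\left|D_{K(\zeta_p)}\right|$, $d$, $p$ and the $H(f_i)$, rescaled so each $g_i$ takes values in $\co_{L,T}$ at $P_0$ away from $T$; I would also fix, for each component $Q_j$ of $D_\infty$, a function $\psi_j\in F(X)$ whose only pole is $Q_j$. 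Any two $\psi_j$ have no common pole, so Lemma~\ref{W} provides an effective $M_F$-constant $\gamma$ for which, at each $w\in T$, at most one index $j$ satisfies $\log|\psi_j(P_0)|_w>\gamma_w$; hence the set $\Sigma$ of components to which $P_0$ is $w$-close at some $w\in T$ has $|\Sigma|\le t$, and the divisor $D'=\sum_{Q_j\notin\Sigma}Q_j$ satisfies $\deg(\delta D')\ge(N-t)\delta\ge 2g_X+1$, so that $\delta D'$ is very ample. Every function in $L(\delta D')\subseteq L(\delta D_\infty)$ has its poles on components outside $\Sigma$ and so is regular on $X\setminus D'$; since at each $w\in T$ the point $P_0$ is $w$-adically far from every component of $D'$ — it is close to at most one component of $D_\infty$, and that one lies in $\Sigma$ — such a function takes a value bounded by an $M_F$-constant there, while away from $T$ it is bounded by integrality. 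Therefore $\Phi_{\delta D'}(P_0)$ has absolute logarithmic height at most $A:=\sum_{w\in T}\max\{0,\gamma_w\}$; and since $\Phi_{\delta D'}$ is a closed immersion and $x$ is expressible on $\Phi_{\delta D'}(X)$ as a ratio of forms of bounded degree and effectively bounded height (using finitely many representations to avoid indeterminacy, once more via the effective Nullstellensatz), $h(\alpha)=h(x(P_0))$ is bounded in terms of $A$, $\delta$, $m$, $t$ and the Riemann--Roch data. Collecting these contributions — the non-archimedean ones governed by $\left|D_{K(\zeta_p)}\right|$, and the archimedean ones, after summing over the $t$ places of $T$, by the quantity $B$ of the statement — yields \eqref{h1} and \eqref{h2}, and a final crude majorization replaces both by $\left|D_{K(\zeta_p)}\right|^{dp^{2r}}(2dp^3B)^{d^2p^{3r}}$.

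The main obstacle lies entirely in the explicit bookkeeping of this last step: making the Riemann--Roch constructions (of the basis $g_1,\ldots,g_m$, of the isolating functions $\psi_j$, and of the representations of $x$ on $\Phi_{\delta D'}(X)$) genuinely effective with every height estimated against $\left|D_{K(\zeta_p)}\right|$, the $H(f_i)$, $d$ and $p$; propagating these estimates through the pigeonhole and through the passage from $h(\Phi_{\delta D'}(P_0))$ to $h(\alpha)$; and finally controlling the resulting product of elementary factors carefully enough to land on \eqref{h1} and \eqref{h2} — all of this carried out simultaneously in the ramified-at-$\infty$ and the unramified-at-$\infty$ cases, which differ only in the values of $N$ and $g_X$.
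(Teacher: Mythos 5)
Your overall strategy is the right one and matches the paper's in spirit: pass to the Kummer cover $y_i^p=f_i(x)$, note it has $p^{r-1}$ (resp.\ $p^r$) points over $x=\infty$ by the Riemann--Hurwitz/degree computation, observe that $(\alpha,\beta_1,\ldots,\beta_r)$ is a $T$-integral point of the complement of the divisor at infinity, and run Runge's pigeonhole with a space of functions of degree $\delta$ chosen so that the relevant linear system has degree at least $2g+t$ (your $\delta$, $m$, $m'$ do unwind to the quantities in the statement). But there is a genuine gap: the content of Theorem \ref{mainth} is the \emph{explicit} bounds \eqref{h1} and \eqref{h2}, and your sketch defers exactly that part to unspecified invocations of ``effective Riemann--Roch over $\co_F$'' and the effective Nullstellensatz (via Lemma \ref{W}), plus a detour through a projective embedding $\Phi_{\delta D'}$ and a re-expression of $x$ on its image. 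None of these steps, as described, produces bounds of the stated shape; in particular nothing in your outline explains where the factor $\left|D_{K(\zeta_p)}\right|^{m p/(2[K(\zeta_p):\mathbb{Q}])}$ or the factor $B^{\delta(\delta+1)t+2\delta+1}$ would come from, and the Lemma \ref{W} constant $\gamma$, while ``effectively computable in principle,'' is not of the explicit numerical form needed. You acknowledge this yourself in the final paragraph, but that bookkeeping is not a residual technicality: it is the theorem.

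The paper's proof supplies a concrete quantitative engine that your plan lacks, and it is worth seeing how different it is from what you propose. There is no isolating function $\psi_j$, no use of Lemma \ref{W} or the Nullstellensatz, and no embedding $\Phi_{\delta D'}$: instead one takes the explicit monomial basis $\mathcal{M}(\delta)$ of $L(\delta D_\infty)$ (heights trivially controlled), identifies quantitatively the places where $P_0$ is near infinity by the explicit conditions $|\alpha|_v>H_v+1$ (archimedean) or $|\alpha|_v>|p|_v^{-2}$ (nonarchimedean), and uses Puiseux expansions at infinity with explicit coefficient control ($p^{2k-1}c_{i,j,k}$ integral, Cauchy-integral estimates giving $|a_k|_v<2^{\delta/p}(H_v+1)^k$) to write the vanishing conditions at the nearby points at infinity as an explicit linear system; the Bombieri--Vaaler form of Siegel's lemma then yields auxiliary functions $g_1,\ldots,g_N$ with the stated height bound --- this is precisely where $\left|D_{K(\zeta_p)}\right|$ enters. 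Finally, base-point-freeness of $|\delta(D_\infty-E)-E|$ gives some $g_i$ with $g_i(\alpha,\beta_1,\ldots,\beta_r)\neq 0$, and the endgame is a direct product-formula computation exploiting that this value decays like $|\alpha|_v^{-1/p}$ at the places where $\alpha$ is large; that gain of $H(\alpha)^{1/p}$ is what converts the product formula into \eqref{h1} and \eqref{h2}. Your route via the height of $\Phi_{\delta D'}(P_0)$ and then recovering $h(\alpha)$ would, even if completed, require an additional effective comparison of heights under the embedding that you have not provided, and as written it would only yield ``an effectively computable bound,'' not the specific inequalities claimed.
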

Some of the arguments in our proof, particularly those involving Puiseux series, follow arguments given in \cite{Hil} and \cite{Wal}.
\begin{proof}
We will apply Runge's method to the curve $C\subset \mathbb{A}^{r+1}$ defined by $y_i^p=f_i(x)$, $i=1,\ldots, r$.  We first collect some geometric facts about the curve $C$, including the rational functions on $C$ that we will be interested in.

Consider the function field of $C$, $K(C)=K(x,y_1,\ldots,y_r)$.  It follows from the fact that $f_1,\ldots, f_r$ are $p$-th power free, pairwise coprime polynomials that $[K(C):K(x)]=p^r$ and a basis for $K(C)$ over $K(x)$ is given by the elements $y_1^{i_1}\cdots y_r^{i_r}$, $0\leq i_1,\ldots, i_r\leq p-1$.  Let
\begin{align*}
\mathcal{M}(\delta)&=\left\{x^{i_0}y_1^{i_1}\cdots y_r^{i_r}\mid (i_0,i_1,\ldots, i_r)\in \mathbb{N}\times \{0,\ldots,p-1\}^r, \sum_{k=0}^ri_kd_k \leq \delta\right\},\\
m(\delta)&=\#\mathcal{M}(\delta),
\end{align*}
where we have also set $d_0=p$.  It follows easily from the definitions that the generating function for $m(i)$, $i\in \mathbb{N}$, is given by 
\begin{equation*}
\sum_{i=0}^\infty m(i)x^i=\frac{\prod_{i=1}^r\sum_{j=0}^{p-1}x^{jd_i}}{(1-x)(1-x^p)}.
\end{equation*}
We first prove the theorem in the case that for some $i_0$, $p\nmid d_{i_0}$.  Under this assumption, $\sum_{j=0}^{p-1}x^{j}$ divides $\sum_{j=0}^{p-1}x^{jd_{i_0}}$.  So
\begin{equation*}
\frac{\prod_{i=1}^r\sum_{j=0}^{p-1}x^{jd_i}}{(1-x)(1-x^p)}=\frac{1}{(1-x)^2}\frac{\prod_{i=1}^r\sum_{j=0}^{p-1}x^{jd_i}}{\sum_{j=0}^{p-1}x^{j}}=\frac{g(x)}{(1-x)^2},
\end{equation*}
where $g(x)$ is a polynomial of degree $(p-1)\sum_{i=1}^rd_i-(p-1)=(p-1)(d-1)$.  So
\begin{equation*}
\frac{\prod_{i=1}^r\sum_{j=0}^{p-1}x^{jd_i}}{(1-x)(1-x^p)}=\frac{g(1)}{(1-x)^2}-\frac{g'(1)}{1-x}+h(x),
\end{equation*}
where $\deg h=(p-1)(d-1)-2$.  It is easily calculated that $g(1)=p^{r-1}$ and $g'(1)=\frac{p^{r-1}(p-1)(d-1)}{2}$.  Thus, it follows that for $\delta\geq (p-1)(d-1)-1$, 
\begin{equation*}
m(\delta)=(\delta+1)p^{r-1}-\frac{p^{r-1}(p-1)(d-1)}{2}=p^{r-1}\left(\delta+1-\frac{(p-1)(d-1)}{2}\right).
\end{equation*}

Let $C'$ be a projective closure of $C$ in $\mathbb{P}^{r+1}$ and let $\pi:\tilde{C}\to C'$ be the normalization of $C'$.  Since $y_{i_0}^p=f_{i_0}(x)$, we have $p\cdot(y_{i_0})_{\infty}=d_{i_0}\cdot(x)_{\infty}$ (viewing $x$ and $y_{i_0}$ as rational functions on $\tilde{C}$).  As $p\nmid d_{i_0}$, it follows that $(x)_{\infty}=pD_\infty$ for some effective divisor $D_{\infty}$.  

\begin{lemma}
\label{gl}
The genus of $\tilde{C}$ is
\begin{equation}
\label{geneq}
g(\tilde{C})=p^{r-1}\left(\frac{(p-1)(d-1)}{2}-1\right)+1.
\end{equation}
For $\delta\geq (p-1)(d-1)-1$, the elements of $\mathcal{M}(\delta)$ form a basis for $L(\delta D_\infty)$.  Furthermore, $D_\infty$ is the sum of $p^{r-1}$ distinct points of $\tilde{C}$.
\end{lemma}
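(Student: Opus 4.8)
\emph{Strategy.} All three assertions are geometric, so I pass to $\kk$ and work on $\tilde{C}$ over $\kk$. The same Kummer-theoretic argument that gives $[K(C):K(x)]=p^r$ works over $\kk$ (a relation $\prod_i f_i^{c_i}=g^p$ in $\kk(x)^*$ forces $p\mid c_{i_0}$ for every index $i_0$, by comparing orders at a root of the squarefree polynomial $f_{i_0}$, the other $f_i$ being units there), so $x\colon\tilde{C}\to\mathbb{P}^1$ is a $(\mathbb{Z}/p\mathbb{Z})^r$-Galois cover, tamely ramified since $\kk$ has characteristic $0$, and $\tilde{C}$ is geometrically irreducible; write $G$ for the Galois group. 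The plan is: (i) determine the fibre over $x=\infty$ by Puiseux expansions; (ii) deduce the facts about $D_\infty$ and $(y_j)_\infty$; (iii) prove the basis statement by averaging over $G$; (iv) read the genus off Riemann--Roch.

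\emph{The fibre over $\infty$.} Writing the monic polynomial as $f_i(x)=x^{d_i}(1+c_{i,1}x^{-1}+\cdots)$, every Puiseux branch of $y_i=\sqrt[p]{f_i(x)}$ at $x=\infty$ has the form $\omega_i\theta^{d_i}\bigl(1+\tfrac{c_{i,1}}{p}\theta^{-p}+\cdots\bigr)$, where $\theta^p=x$, $\omega_i^p=1$, and the unit series lies in $\kk[[\theta^{-1}]]$. Choosing $a,b$ with $ad_{i_0}+bp=1$ gives $\theta=(\theta^{d_{i_0}})^{a}x^{b}$, so the completion of $\kk(C)$ at every place over $\infty$ contains $\kk((\theta^{-1}))$, while conversely all of $y_1,\dots,y_r$ already lie in $\kk((\theta^{-1}))$. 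Since $x^{-1}$ is a uniformizer of $\kk((x^{-1}))$, the extension $\kk((\theta^{-1}))/\kk((x^{-1}))$ is totally ramified of degree $p$; hence each place over $\infty$ has residue degree $1$ and ramification index $p$, and the fundamental identity $\sum_{Q\mid\infty}e_Qf_Q=\deg x=p^r$ shows there are exactly $p^{r-1}$ of them. Thus $(x)_\infty=p\sum_{Q\mid\infty}Q=pD_\infty$ with $D_\infty=\sum_{Q\mid\infty}Q$ a sum of $p^{r-1}$ distinct points (equivalently, the inertia group over $\infty$ is a nontrivial cyclic subgroup of the tame group $G$, hence of order $p$). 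Taking pole divisors in $y_j^p=f_j(x)$ then gives $(y_j)_\infty=d_jD_\infty$.

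\emph{The space $L(\delta D_\infty)$.} For a monomial $x^{i_0}y_1^{i_1}\cdots y_r^{i_r}$ with $0\le i_1,\dots,i_r\le p-1$ and $\sum_{k=0}^r i_kd_k\le\delta$ (where $d_0=p$), its pole divisor is at most $\bigl(i_0p+\sum_{j=1}^r i_jd_j\bigr)D_\infty\le\delta D_\infty$, and it has no poles off $\Supp D_\infty$ because $x$ and each $y_j$ are integral over $\kk[x]$; hence $\mathcal{M}(\delta)\subset L(\delta D_\infty)$, and its $m(\delta)$ elements are linearly independent since the $y_1^{i_1}\cdots y_r^{i_r}$ ($0\le i_j\le p-1$) form a $\kk(x)$-basis of $\kk(C)$. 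For the opposite inclusion, let $\psi\in L(\delta D_\infty)$. Because $G$ permutes the places over $\infty$, it fixes $D_\infty$, so every $G$-isotypic component of $\psi$ is again in $L(\delta D_\infty)$, and I may assume $\psi=a(x)y_1^{i_1}\cdots y_r^{i_r}$ with $a\in\kk(x)$ and $0\le i_j\le p-1$. At a point $Q$ over a root $\alpha$ of some $f_j$ (a simple root, as the $f_i$ are squarefree and pairwise coprime) one has $\ord_Q(y_1^{i_1}\cdots y_r^{i_r})=\tfrac{i_j}{p}\ord_Q(f_j(x))=\tfrac{i_j}{p}e_Q<e_Q$, and over any other finite point the monomial is a unit; in either case a pole of $a(x)$ below $Q$ would produce a pole of $\psi$ off $\Supp D_\infty$, which is impossible, so $a(x)$ is a polynomial. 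Comparing orders at a $Q\mid\infty$, where $e_Q=p$, yields $p\deg a+\sum_{j=1}^r i_jd_j\le\delta$; hence every monomial $x^n y_1^{i_1}\cdots y_r^{i_r}$ occurring in $a(x)y_1^{i_1}\cdots y_r^{i_r}$ satisfies $nd_0+\sum_{j=1}^r i_jd_j\le\delta$ and so lies in $\mathcal{M}(\delta)$. Therefore $\mathcal{M}(\delta)$ spans $L(\delta D_\infty)$ over $\kk$, and also over $K$ since $\mathcal{M}(\delta)\subset K(C)$; in fact this argument works for every $\delta\ge 0$.

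\emph{The genus, and the main difficulty.} Combining the preceding step with the generating-function computation of $m(\delta)$ carried out above, $h^0(\delta D_\infty)=m(\delta)=p^{r-1}\bigl(\delta+1-\tfrac{(p-1)(d-1)}{2}\bigr)$ for $\delta\ge(p-1)(d-1)-1$. Picking such a $\delta$ with $\deg(\delta D_\infty)=\delta p^{r-1}>2g(\tilde{C})-2$ and applying Riemann--Roch gives $m(\delta)=\delta p^{r-1}+1-g(\tilde{C})$, hence $g(\tilde{C})=p^{r-1}\bigl(\tfrac{(p-1)(d-1)}{2}-1\bigr)+1$, which is \eqref{geneq}. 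As a cross-check, Riemann--Hurwitz for $x\colon\tilde{C}\to\mathbb{P}^1$, tamely ramified with $p^{r-1}$ points of ramification index $p$ over each of the $d$ roots of $\prod_i f_i$ and over $\infty$, gives $2g-2=-2p^r+(d+1)p^{r-1}(p-1)$, the same number. The genuinely delicate point is the local analysis at $x=\infty$ — that there are exactly $p^{r-1}$ places there, each with ramification index $p$ — and, subsidiarily, the estimate $\ord_Q(y_1^{i_1}\cdots y_r^{i_r})<e_Q$ at the finite branch points; granting those, the rest is bookkeeping with Riemann--Roch and the already-computed $m(\delta)$.
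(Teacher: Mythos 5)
Your proof is correct, but it takes a genuinely different route from the paper's. The paper never computes the fibre over $\infty$ or proves the spanning of $\mathcal{M}(\delta)$ directly: it only observes that every point of $\tilde{C}$ over $\infty$ has ramification index divisible by $p$ (so the number $n$ of distinct points in $D_\infty$ satisfies $n\le p^{r-1}$), applies Riemann--Hurwitz to get the lower bound $g(\tilde{C})\ge \frac{p^{r-1}(dp-d-p)-n}{2}+1\ge p^{r-1}\left(\frac{(p-1)(d-1)}{2}-1\right)+1$, and plays this against the upper bound on $g(\tilde{C})$ obtained from the linear independence of $\mathcal{M}(\delta)\subset L(\delta D_\infty)$ together with Riemann--Roch for large $\delta$; the forced equalities in this squeeze then deliver the genus formula, $l(\delta D_\infty)=m(\delta)$ (hence the basis claim), and $n=p^{r-1}$ all at once. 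You instead prove the two ``hard'' facts head-on: the completion argument with $\theta^p=x$ (using $p\nmid d_{i_0}$ and B\'ezout) shows every place over $\infty$ has ramification index exactly $p$, giving precisely $p^{r-1}$ points, and the $G$-isotypic decomposition combined with the pole estimates at the finite branch points shows $\mathcal{M}(\delta)$ spans $L(\delta D_\infty)$ for every $\delta\ge 0$; the genus then falls out of Riemann--Roch, with Riemann--Hurwitz serving only as a cross-check. Your route is longer but more informative --- it gives the exact local structure at infinity, the basis statement for all $\delta$, and it makes explicit the geometric irreducibility of $C$ over $\kk$, which the paper leaves implicit --- while the paper's squeeze argument buys all three conclusions simultaneously without ever needing the spanning argument or the exact fibre computation.
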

\begin{proof}
Consider the morphism $\phi:\tilde{C}\to \mathbb{P}^1$ obtained from the rational function $x\circ \pi$ on $\tilde{C}$.  Note that $\deg \phi=p^r$.  If $x(\pi(Q))$ is a root of $f_i$ for some $i$, then $\phi$ has ramification index $p$ at $Q$.  This gives $dp^{r-1}$ points of $\tilde{C}$ with ramification index $p$ with respect to $\phi$.  Since $(x)_{\infty}=pD_\infty$, every point of $\tilde{C}$ above $\infty\in \mathbb{P}^1$ has ramification index divisible by $p$.  Alternatively, we can see this as follows.  Let $\tilde{Y}$ be the nonsingular projective model of the affine plane curve $Y$ defined by $y^p=f_{i_0}(x)$.  Since $p\nmid d_{i_0}$, it is easily seen that $Y$ has a unique point $Q$ at infinity, and the map $\tilde{Y}\to\mathbb{P}^1$ induced by the projection map $(x,y)\mapsto x$ has ramification index $p$ at $Q$.  Since the map $\phi$ factors through this map, it follows that every point above $\infty\in \mathbb{P}^1$ on $\tilde{C}$ has ramification index divisible by $p$.  Let $n$ be the number of distinct points in $D_\infty$.  Note that $n\leq p^{r-1}$.  Then by the Riemann-Hurwitz formula applied to $\phi$, we conclude that
\begin{equation*}
2g(\tilde{C})-2\geq -2p^r+d(p-1)p^{r-1}+p^r-n,
\end{equation*}
or $g(\tilde{C})\geq \frac{p^{r-1}(dp-d-p)-n}{2}+1\geq p^{r-1}\left(\frac{(p-1)(d-1)}{2}-1\right)+1$.  

On the other hand, since $(x)_\infty=pD_\infty$ and $(y_i)_\infty=d_iD_\infty$, we see that $\mathcal{M}(\delta)\subset L(\delta D_\infty)$.  Since the elements of $\mathcal{M}(\delta)$ are linearly independent, for $\delta\geq (p-1)(d-1)-1$ we have $\dim L(\delta D_\infty)=l(\delta D_\infty)\geq m(\delta)=p^{r-1}\left(\delta+1-\frac{(p-1)(d-1)}{2}\right)$.  Note also that $\deg D_\infty=p^{r-1}$.  Thus, by Riemann-Roch, for $\delta p^{r-1}\geq 2g(\tilde{C})-1$, we have $l(\delta D_\infty)=\delta p^{r-1}+1-g(\tilde{C})$.  It follows that $g(\tilde{C})\leq p^{r-1}\left(\frac{(p-1)(d-1)}{2}-1\right)+1$.  This proves \eqref{geneq}.  Thus, for $\delta\geq (p-1)(d-1)-1$, $l(\delta D_\infty)=m(\delta)$, and the elements of $\mathcal{M}(\delta)$ form a basis for $L(\delta D_\infty)$.  Additionally, we see that we must have $n=p^{r-1}$, and so $D_\infty$ is the sum of exactly $p^{r-1}$ distinct points of $\tilde{C}$.
\end{proof}

We now work out some facts about the Puiseux expansions of the algebraic functions $y_i$.  For each $i$, $y_i^p-f_i(x)$ can be factored using Puiseux series as 
\begin{equation}
\label{fact}
y_i^p-f_i(x)=\prod_{j=0}^{p-1}y_i-y_{i,j}(x).  
\end{equation}
Explicitly, if $f_i=x^{d_i}+\sum_{j=0}^{d_i-1}a_{i,j}x^j$, then writing 
\begin{equation*}
f_i^{\frac{1}{p}}=x^{\frac{d_i}{p}}\left(1+\sum_{j=0}^{d_i-1}a_{i,j}x^{j-d_i}\right)^{\frac{1}{p}}
\end{equation*}
and using the Taylor series for $z=(1+t)^{\frac{1}{p}}$ about $z_0=1$, we see that (after possibly reindexing),
\begin{equation}
\label{Pseries}
y_{i,j}=\sum_{k=0}^{\infty}c_{i,j,k}x^{\frac{d_i}{p}-k}=\zeta_p^jx^{\frac{d_i}{p}}\sum_{k=0}^{\infty}\binom{1/p}{k} \left(\sum_{l=0}^{d_i-1}a_{i,l}x^{l-d_i}\right)^k, \quad j=0,\ldots, p-1.
\end{equation}
Expanding out the right-hand side of \eqref{Pseries} appropriately then explicitly gives the coefficients $c_{i,j,k}$ and the $p$ Puiseux expansions $y_{i,j}(x)$,  $j=0,\ldots, p-1$.  When evaluating the Puiseux series \eqref{Pseries} at a point $x$, we assume the choice of some fixed branch of $x^{\frac{1}{p}}$.  

\begin{lemma}
\label{lconv}
Let $y_{i,j}(x)$ be the Puiseux expansions in \eqref{Pseries}.  Then
\begin{equation}
\label{denom}
p^{2k-1}c_{i,j,k}\in \co_K[\zeta_p], \quad k\geq 1.
\end{equation}
Extend each place $v\in M_L$ to $\bar{L}$ in some way.  For $v\in M_L$ and $x\in \bar{L}$, $y_{i,j}(x)$ converges $v$-adically if
\begin{align*}
|x|_v&\geq |f_i|_v+1, && \text{if $v$ is archimedean},\\
|x|_v&>\frac{1}{|p|_v^2}, && \text{if $v$ is nonarchimedean}.
\end{align*}
\end{lemma}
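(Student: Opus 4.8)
Recall the Puiseux expansions \eqref{Pseries}. The plan is to read off both assertions of the lemma from the single binomial series $(1+t)^{1/p}=\sum_{m\ge 0}\binom{1/p}{m}t^m$. Write $f_i(x)=x^{d_i}+\sum_{l=0}^{d_i-1}a_{i,l}x^l$ with $a_{i,l}\in\co_K$, and set $u_i=\sum_{l=0}^{d_i-1}a_{i,l}x^{l-d_i}$, so that \eqref{Pseries} becomes $y_{i,j}=\zeta_p^j\,x^{d_i/p}\sum_{m\ge 0}\binom{1/p}{m}u_i^m$ (I have renamed the summation index). Expanding each $u_i^m$ into monomials in $x$ and collecting the coefficient of $x^{d_i/p-k}$ gives $c_{i,j,k}=\zeta_p^j\sum_m\binom{1/p}{m}P_{i,m,k}$, where $P_{i,m,k}\in\co_K$ is a $\mathbb Z$-linear combination of products of the $a_{i,l}$; since every exponent $l-d_i$ is at most $-1$, only indices with $1\le m\le k$ occur when $k\ge 1$. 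Hence \eqref{denom} follows once we establish the purely rational assertion $p^{2m-1}\binom{1/p}{m}\in\mathbb Z$ for $m\ge 1$: granting this, $p^{2k-1}\binom{1/p}{m}=p^{2(k-m)}\bigl(p^{2m-1}\binom{1/p}{m}\bigr)\in\mathbb Z$ whenever $m\le k$, so $p^{2k-1}c_{i,j,k}\in\zeta_p^j\,\co_K\subseteq\co_K[\zeta_p]$.

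To prove $p^{2m-1}\binom{1/p}{m}\in\mathbb Z$, I would clear denominators: $\binom{1/p}{m}=\frac{\prod_{j=0}^{m-1}(1-jp)}{p^m\,m!}$, hence $p^{2m-1}\binom{1/p}{m}=\pm\,p^{m-1}\frac{\prod_{j=1}^{m-1}(1-jp)}{m!}$, and it suffices to show that $m!$ divides $p^{m-1}\prod_{j=1}^{m-1}(1-jp)$. I would verify this one prime at a time. At the prime $p$ the product $\prod_j(1-jp)$ is a $p$-adic unit, while $\ord_p(p^{m-1})=m-1\ge\ord_p(m!)$, since Legendre's formula gives $\ord_p(m!)=\frac{m-s_p(m)}{p-1}\le m-1$ (with $s_p(m)\ge 1$ the base-$p$ digit sum). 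At a prime $q\ne p$: for each $s\ge 1$, $p$ is a unit modulo $q^s$, so the congruence $1-jp\equiv 0\pmod{q^s}$ cuts out exactly one residue class of $j$ modulo $q^s$; therefore at least $\lfloor m/q^s\rfloor$ of the integers $j\in\{0,\dots,m-1\}$ satisfy it, and summing over $s$ yields $\ord_q\bigl(\prod_{j=0}^{m-1}(1-jp)\bigr)\ge\sum_{s\ge 1}\lfloor m/q^s\rfloor=\ord_q(m!)$. By the reduction above this establishes \eqref{denom}.

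For the convergence statements I would bound the general term of $\sum_m\binom{1/p}{m}u_i^m$. In the archimedean case, $\bigl|\binom{1/p}{m}\bigr|_v=\prod_{l=0}^{m-1}\frac{|1/p-l|}{l+1}\le 1$, and summing a geometric series (using that $f_i$ is monic, so $|f_i|_v\ge 1$) gives $|u_i|_v\le|f_i|_v\,\frac{1-|x|_v^{-d_i}}{|x|_v-1}<\frac{|f_i|_v}{|x|_v-1}\le 1$ as soon as $|x|_v\ge|f_i|_v+1$; then $\sum_m\bigl|\binom{1/p}{m}u_i^m\bigr|_v\le\sum_m|u_i|_v^m<\infty$. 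In the nonarchimedean case, \eqref{denom} gives $\bigl|\binom{1/p}{m}\bigr|_v\le|p|_v^{\,1-2m}$ (this being trivial when $v\nmid p$), and $|u_i|_v\le\max_l|x|_v^{\,l-d_i}\le|x|_v^{-1}$ once $|x|_v>1$, because the $a_{i,l}$ are $v$-integral; hence $\bigl|\binom{1/p}{m}u_i^m\bigr|_v\le|p|_v\bigl(|p|_v^{-2}|x|_v^{-1}\bigr)^m$, which tends to $0$ precisely when $|x|_v>1/|p|_v^2$ (and then automatically $|x|_v>1$). In either case these bounds show that the double sum obtained by expanding every $u_i^m$ into monomials converges (absolutely when $v$ is archimedean, in the ultrametric sense otherwise), so it may be regrouped by powers of $x$, which yields the convergence of $y_{i,j}(x)=\sum_k c_{i,j,k}x^{d_i/p-k}$.

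The one genuinely delicate point is obtaining the sharp exponent $2m-1$ (equivalently $2k-1$) in \eqref{denom} rather than something larger: this needs simultaneously Legendre's estimate $\ord_p(m!)\le m-1$ and the residue-counting inequality $\ord_q\bigl(\prod_{j=1}^{m-1}(1-jp)\bigr)\ge\ord_q(m!)$ for $q\ne p$, and weakening either of these would enlarge the denominator bound and hence degrade the height estimates built on it in Theorem~\ref{mainth}. Everything else is a routine geometric-series or ``the terms tend to zero'' argument.
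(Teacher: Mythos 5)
Your proof is correct and follows essentially the same route as the paper: express each $c_{i,j,k}$ as an $\co_K[\zeta_p]$-integral combination of the binomial coefficients $\binom{1/p}{m}$ with $m\leq k$, invoke $p^{2m-1}\binom{1/p}{m}\in\mathbb{Z}$ to get \eqref{denom}, and then obtain convergence from the geometric-series bound $\bigl|\sum_l a_{i,l}x^{l-d_i}\bigr|_v<1$ in the archimedean case and from the terms-tending-to-zero criterion via the coefficient bound $|c_{i,j,k}|_v\leq |p|_v^{1-2k}$ in the nonarchimedean case. The only difference is that you supply a full prime-by-prime proof of the integrality of $p^{2m-1}\binom{1/p}{m}$ (and are slightly more explicit about regrouping the double series), whereas the paper simply quotes this fact; both are fine.
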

\begin{proof}
Each coefficient $c_{i,j,k}$ is an $\co_K[\zeta_p]$-integral linear combination of numbers $\binom{1/p}{l}$, $0\leq l\leq k$.  Since $p^{2l-1}\binom{1/p}{l}\in \mathbb{Z}$ for all $l\geq 1$, \eqref{denom} follows.  We now prove the assertions about convergence.  If $v$ is archimedean and $|x|_v\geq |f_i|_v+1$, then we note that 
\begin{equation}
\label{est}
\left|\sum_{j=0}^{d_i-1}a_{i,j}x^{j-d_i}\right|_v\leq |f_i|_v \sum_{j=0}^{d_i-1}|x|_v^{j-d_i}<\frac{|f_i|_v}{|x|_v-1}\leq 1.
\end{equation}
It now follows easily from \eqref{Pseries} that $y_{i,j}(x)$ converges.  For nonarchimedean $v$, $y_{i,j}(x)$ converges if and only if $\lim_{k\to \infty}|c_{i,j,k}x^{\frac{d_i}{p}-k}|_v=0$.  By \eqref{denom}, $|c_{i,j,k}|_v\leq \frac{1}{|p|_v^{2k-1}}$.  Thus, in this case, it is clear that $y_{i,j}(x)$ converges $v$-adically if $|x|_v>\frac{1}{|p|_v^2}$. 
\end{proof}

Let $T'\subset T$ be the set of places $v\in T$ such that
\begin{align*}
|\alpha|_v&> H_v+1, && \text{if $v$ is archimedean},\\
|\alpha|_v&>\frac{1}{|p|_v^2}, && \text{if $v$ is nonarchimedean}.
\end{align*}

By \eqref{fact} and Lemma \ref{lconv}, we have that for any $i\in \{1,\ldots,r\}$ and any $v\in T'$, there exists $j_{i,v}\in \{0,\ldots,p-1\}$ such that $y_{i,j_{i,v}}(\alpha)$ converges $v$-adically to $\beta_i$.  As $|x|_v\to \infty$, the point on $C$ defined by $y_i=y_{i,j_{i,v}}(x)$, $i=1,\ldots, r$, converges $v$-adically to some point $P_v$ at infinity.  More precisely, using the map $\pi$ to pull back points to $\tilde{C}$, we can view $P_v\in \Supp D_\infty\subset \tilde{C}$.  By Lemma~\ref{gl}, since $\#(\Supp D_\infty)=p^{r-1}$, there are $p^{r-1}$ possibilities for the point $P_v$.  Let $E$ be the divisor on $\tilde{C}$ given by $E=\sum_{P\in\{P_v\mid v\in T'\}}P$.    We want to find functions $g\in L(\delta D_\infty)$ such that $g$ vanishes at every point $P_v$, $v\in T'$.  In other words, we want functions $g\in L(\delta (D_\infty-E)-E)$.  By assumption, $\deg E\leq |T'|\leq |T|<p^{r-1}=\deg D_\infty$, and so $L(\delta (D_\infty-E)-E)\neq 0$ for $\delta\gg 0$.  A function $g$ vanishing at all points $P_v$, $v\in T'$, will be $v$-adically small at the point $(x,y_1,\ldots,y_r)=(\alpha,\beta_1,\ldots,\beta_r)$ for $v\in T'$.  More precisely:

\begin{lemma}
\label{mainl}
Let $\delta\geq (p-1)(d-1)-1$ be a positive integer such that $(\delta+1)\deg E<m(\delta)=l(\delta D_\infty)$.  Let $N=l(\delta (D_\infty-E)-E)\geq m(\delta)-(\delta+1)\deg E$.  Then there exist polynomials $g_1,\ldots, g_N\in \co_K[\zeta_p][x_0,\ldots, x_r]$ such that $g_1(x,y_1,\ldots,y_r),\ldots, g_N(x,y_1,\ldots,y_r)$ form a basis for $L(\delta (D_\infty-E)-E)$,
\begin{equation}
\label{gh}
H(g_i)\leq \left|D_{K(\zeta_p)}\right|^{\frac{N}{2[K(\zeta_p):\mathbb{Q}]}}\left(\frac{\sqrt{m(\delta)}(2p^2B)^{\delta/p}}{p}\right)^{m(\delta)-N}, \quad i=1,\ldots, N,
\end{equation}
and for all $v\in T', i=1,\ldots, N$,
\begin{equation}
\label{gv}
\left|g_i\left(\alpha,\beta_1,\ldots,\beta_r\right)\right|_v\leq
\begin{cases}
\frac{m(\delta)|g_i|_v(2H_v+2)^{(\delta+1)/p}}{2^{1/p}|\alpha|_v^{1/p}\left(1- \frac{H_v+1}{|\alpha|_v}\right)} &\text{if } v \text{ is archimedean},\\
\frac{|g_i|_v}{|\alpha|_v^{1/p}|p|_v^{2(\delta+1)/p-1}}
 &\text{if } v \text{ is nonarchimedean}.
\end{cases}
\end{equation}
\end{lemma}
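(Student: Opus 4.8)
\emph{Proof proposal.}
The plan is to realize $L(\delta(D_\infty-E)-E)$ as the solution space of an explicit linear system in the coefficients of functions $g=\sum_{m\in\mathcal{M}(\delta)}a_m m\in L(\delta D_\infty)$, and then to extract a height-controlled basis by Siegel's lemma. First I would invoke Lemma~\ref{gl}: $\mathcal{M}(\delta)$ is a basis of $L(\delta D_\infty)$, and $D_\infty=\sum_{s=1}^{p^{r-1}}Q_s$ with the $Q_s$ distinct. Since $(x)_\infty=pD_\infty$, the map $x\circ\pi$ has ramification index $p$ at each $Q_s$, so $t=x^{-1/p}$ (for a suitable choice of branch of $x^{1/p}$) serves as a local coordinate at $Q_s$, and the $p^r$ Puiseux series $y_{i,j}(x)$ of \eqref{Pseries} partition into $p^{r-1}$ branches, one at each $Q_s$, giving the local expansions of $y_1,\dots,y_r$ there. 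Because $E\le D_\infty$ is reduced, a function $g\in L(\delta D_\infty)$ lies in $L(\delta(D_\infty-E)-E)$ exactly when, at each of the $\deg E$ points of $\Supp E$, the Laurent expansion of $g$ in the corresponding $t$ has vanishing coefficients on $t^{-\delta},\dots,t^{0}$.

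Writing these vanishing conditions out in the basis $\mathcal{M}(\delta)$ gives a homogeneous system of at most $(\delta+1)\deg E=m(\delta)-N$ equations in the $m(\delta)$ unknowns $a_m$ whose solution space is $L(\delta(D_\infty-E)-E)$, of dimension $N$. Its coefficients are the Laurent coefficients, down through the $t^{0}$ term, of the monomials $x^{i_0}\prod_i y_{i,j}(x)^{i_i}$, which by \eqref{Pseries} are, up to a power of $\zeta_p$, equal to $x^{(\sum_k i_kd_k)/p}\prod_{i=1}^r(1+\sum_l a_{i,l}x^{l-d_i})^{i_i/p}$; since $\sum_k i_kd_k\le\delta$, only the first $\le\delta/p$ correction coefficients of each factor enter. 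By \eqref{est} together with the standard root bound $|\theta|_v\le H_v+1$ for roots $\theta$ of the monic $f_i$, these coefficients are bounded archimedeanly, and by \eqref{denom} and $p^{2k-1}\binom{1/p}{k}\in\mathbb{Z}$ they lie in $\co_K[\zeta_p]$ after multiplication by $p^{\lceil 2\delta/p\rceil-1}$. A short computation then bounds each row of the cleared system in $\ell^2$-norm by $\sqrt{m(\delta)}\,(2p^2B)^{\delta/p}/p$. Applying the Bombieri--Vaaler form of Siegel's lemma over $K(\zeta_p)$ yields a basis $g_1,\dots,g_N\in\co_K[\zeta_p][x_0,\dots,x_r]$ of the solution space with $\prod_i H(g_i)$ --- hence, as $H\ge1$, each $H(g_i)$ --- at most $|D_{K(\zeta_p)}|^{N/(2[K(\zeta_p):\mathbb{Q}])}$ times the product of the $m(\delta)-N$ row norms, which is exactly \eqref{gh}. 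By construction each $g_i(x,y_1,\dots,y_r)\in L(\delta(D_\infty-E)-E)$, so $g_i$ vanishes at every point of $\Supp E$.

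For \eqref{gv}, I would fix $v\in T'$; by construction $P_v\in\Supp E$ is the point to which $(x,y_{1,j_{1,v}}(x),\dots,y_{r,j_{r,v}}(x))$ tends $v$-adically as $|x|_v\to\infty$, and since $g_i$ vanishes there, substituting this branch gives a Laurent series $g_i(x,y_{1,j_{1,v}}(x),\dots)=\sum_{k\ge1}b_{i,k}x^{-k/p}$ with no polar and no constant term, which by Lemma~\ref{lconv} and the definition of $T'$ converges $v$-adically at $x=\alpha$ to $g_i(\alpha,\beta_1,\dots,\beta_r)$. Each $b_{i,k}$ is a sum of at most $m(\delta)$ terms $a_{i,m}\gamma_{m,(e_m+k)/p}$, where $e_m=\sum_k i_kd_k$ is the pole order of $m$ at $P_v$ and $\gamma_{m,l}$ is the $x^{-l}$-coefficient of the correction factor $\prod_i(1+\sum_l a_{i,l}x^{l-d_i})^{i_i/p}$ of $m$; the same estimates as above --- now with an extra factor $|g_i|_v$ and correction orders up to $(\delta+k)/p$ --- bound $|b_{i,k}|_v$. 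Pairing with $|\alpha|_v^{-k/p}$ and using $|\alpha|_v>1/|p|_v^{2}$ in the nonarchimedean case (so the terms are non-increasing in $k$ and the ultrametric sum equals the $k=1$ term) and $|\alpha|_v>H_v+1$ in the archimedean case (so the terms decay geometrically with ratio governed by $(H_v+1)/|\alpha|_v$), one obtains the two cases of \eqref{gv}: the $k=1$ term supplies the factor $|\alpha|_v^{-1/p}$, and in the archimedean case summing the geometric tail supplies $1/(1-(H_v+1)/|\alpha|_v)$.

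All the structural ingredients are available: Lemma~\ref{gl} gives the ramification picture at infinity and the Riemann--Roch basis $\mathcal{M}(\delta)$, Lemma~\ref{lconv} controls both the integrality (up to powers of $p$) and the convergence of the Puiseux expansions, and Siegel's lemma over $K(\zeta_p)$ with its discriminant factor is standard. The real work --- and the main obstacle --- is the explicit height bookkeeping: carrying the denominators $p^{2k-1}\binom{1/p}{k}$ and the polynomial heights $|f_i|_v$ through products of up to $r$ Puiseux series and through truncation at $t^{0}$ (for \eqref{gh}) or $t^{-1/p}$ (for \eqref{gv}), so as to land on the exact exponents $\delta/p$ and $(\delta+1)/p$ and the constant $2p^2$, and to keep the archimedean geometric-tail estimate sharp. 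This is also where the hypothesis $\delta\ge(p-1)(d-1)-1$ --- in the refined quantitative form that fixes the value of $\delta$ in Theorem~\ref{mainth} --- is used, to ensure $N>0$ while the estimates stay clean.
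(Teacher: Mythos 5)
Your proposal follows essentially the same route as the paper: express membership in $L(\delta(D_\infty-E)-E)$ as vanishing of the coefficients of $x^{i/p}$, $0\le i\le\delta$, in the Puiseux expansions at the points $P_v$, bound the resulting matrix entries via the integrality statement \eqref{denom} and the archimedean estimate on the Taylor coefficients, apply Bombieri--Vaaler over $K(\zeta_p)$ (with the rank $m(\delta)-N$ as exponent, which is what you in effect use, even though your ``$(\delta+1)\deg E=m(\delta)-N$'' should be an inequality), and then bound \eqref{gv} by estimating the purely negative-exponent tail of the series at $x=\alpha$, exactly as in the paper's treatment of the principal parts $M_{<0}$. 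The only difference is cosmetic bookkeeping (row-norm form of Siegel's lemma, per-coefficient rather than per-monomial summation of the tail), so this is the paper's argument in substance.
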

\begin{proof}
We construct the basis $g_1,\ldots, g_N$ by looking at the Puiseux expansions of functions in $L(\delta D_\infty)$.  A nonzero polynomial $g(x,y_1,\ldots, y_r)$ vanishes at $P_v$ if and only if in the Puiseux expansion $g\left(x,y_{1,j_{1,v}}(x),\ldots,y_{r,j_{r,v}}(x)\right)$, we have
\begin{equation*}
\ord_x g\left(x,y_{1,j_{1,v}}(x),\ldots,y_{r,j_{r,v}}(x)\right)<0.
\end{equation*}
Let $v_1,\ldots, v_e$ be a minimal set of places in $T'$ such that $\{P_{v_1},\ldots, P_{v_e}\}=\{P_v\mid v\in T'\}$.  Since, by Lemma \ref{gl}, $\mathcal{M}(\delta)$ is a basis for $L(\delta D_\infty)$, explicitly determining the vector space $L(\delta (D_\infty-E)-E)\subset L(\delta D_\infty)$ is equivalent to solving the system of equations:
\begin{equation}
\label{sys}
\ord_x\sum_{\bi=(i_0,\ldots,i_r) \in I(\delta)}c_\bi x^{i_0}y_{1,j_{1,v_l}}(x)^{i_1}\cdots y_{r,j_{r,v_l}}(x)^{i_r}<0,\quad l=1,\ldots, e,
\end{equation}
where $I(\delta)=\{(i_0,\ldots, i_r)\in \mathbb{N}^{r+1}\mid x^{i_0}y_1^{i_1}\cdots y_r^{i_r}\in \mathcal{M}(\delta)\}$.  The only monomials which appear on the left-hand side of \eqref{sys} with nonnegative degree are $x^{i/p}$, $i=0,\ldots, \delta$.  So \eqref{sys} yields a system of $(\delta+1)e$ equations in $m(\delta)$ variables.  Let $A$ be the corresponding $(\delta+1)e\times m(\delta)$ matrix.  We now bound the height of the matrix $A$.

Let $(i_0,\ldots,i_r)\in I(\delta)$.  Then, by \eqref{Pseries}, we have
\begin{equation}
\label{Pseries2}
x^{i_0}y_{1,j_{1,v}}(x)^{i_1}\cdots y_{r,j_{r,v}}(x)^{i_r}=\zeta x^{\sum_{l=0}^r i_ld_l/p}\sum_{k=0}^\infty a_kx^{-k},
\end{equation}
for some $p$-th root of unity $\zeta$ and $a_k\in K$, $k\in \mathbb{N}$.  Explicitly, $a_k$ can be computed from the Taylor series for $\left(\prod_{j=1}^r x^{i_jd_j}f_j(1/x)^{i_j}\right)^{1/p}$ at $x=0$:
\begin{equation*}
F(x)=\left(\prod_{j=1}^r x^{i_jd_j}f_j(1/x)^{i_j} \right)^{1/p}=\sum_{k=0}^\infty a_kx^k,
\end{equation*}
with $a_0=1$.  As before, this immediately implies that 
\begin{equation}
\label{ad}
p^{2k-1}a_k\in \co_K, \quad k\geq 1.
\end{equation}
We now estimate the size of $a_k$.  Let $v\in T_\infty$, where $T_\infty$ is the set of archimedean places in $T$.  Since $F(z)$ is analytic in $\mathbb{C}_v$ for $|z|_v\leq \min_i \frac{1}{|f_i|_v+1}=\frac{1}{H_v+1}$, we have
\begin{equation*}
|a_k|_v=\left|\frac{1}{2\pi i}\int_{|z|_v=\frac{1}{H_v+1}}\frac{F(z)}{z^{k+1}}\,dz\right|_v\leq (H_v+1)^k\max_{|z|_v=\frac{1}{H_v+1}}|F(z)|_v.
\end{equation*}
By calculations similar to \eqref{est}, for $|z|_v=\frac{1}{H_v+1}$ we have
\begin{equation*}
|F(z)|_v=\left|\left(\prod_{j=1}^r z^{i_jd_j}f(1/z)^{i_j} \right)^{1/p}\right|_v<\prod_{j=1}^r \left(1+\frac{|z|_vH_{v}}{1-|z|_v} \right)^{i_j/p}\leq 2^{\delta/p}.
\end{equation*}
So 
\begin{equation}
\label{ab}
|a_k|_v<2^{\delta/p}(H_v+1)^k, \quad k\in \mathbb{N}.
\end{equation}
By \eqref{Pseries2}, each entry in $A$ is $\zeta a_k$ for some $p$-th root of unity $\zeta$ and some $k\leq \frac{\delta}{p}$.  Now by \eqref{ad} and \eqref{ab}, 
\begin{equation*}
H(A)\leq \frac{1}{p}\left(2p^2\prod_{v\in T_\infty}(H_v+1)^{[L_v:\mathbb{Q}_v]/[L:\mathbb{Q}]}\right)^{\delta/p}=\frac{(2p^2B)^{\delta/p}}{p},
\end{equation*}
where $H(A)$ is the absolute multiplicative height of $A$ as a point of $\mathbb{P}^{(\delta+1)em(\delta)-1}$.

Note that $A$ is a matrix over $K(\zeta_p)$ of rank $m(\delta)-N$.  We now apply an appropriate version of Siegel's lemma, due to Bombieri and Vaaler \cite[Cor.\ 2.9.9]{BG}.
\begin{lemma}[Bombieri-Vaaler ]
Let $A$ be an $m\times n$ matrix of rank $r$ with entries in a number field $L$.  Then the $L$-vector space of solutions of $A\mathbf{x}=0$ has a basis $\mathbf{x}_1,\ldots,\mathbf{x}_{n-r}\in\co_{K}^n$ such that
\begin{equation*}
\prod_{i=1}^{n-r}H(\mathbf{x}_i)\leq |D_L|^{\frac{n-r}{2[L:\mathbb{Q}]}}\left(\sqrt{n}H(A)\right)^r.
\end{equation*}
\end{lemma}

Thus, there exists a basis $\mathbf{b}_1,\ldots,\mathbf{b}_N\in \co_{\mathbb{K}(\zeta_p)}^{m(\delta)}$ of the nullspace of $A$ with 
\begin{equation}
\label{bh}
H(\mathbf{b}_i)\leq \prod_{j=1}^NH(\mathbf{b}_j)\leq \left|D_{K(\zeta_p)}\right|^{\frac{N}{2[K(\zeta_p):\mathbb{Q}]}}\left(\frac{\sqrt{m(\delta)}(2p^2B)^{\delta/p}}{p}\right)^{m(\delta)-N},
\end{equation}
for $i=1,\ldots, N$.  Let $g_j$, $j=1,\ldots, N$, be the polynomials
\begin{equation*}
g_j(x_0,\ldots, x_r)=\sum_{\bi=(i_0,\ldots,i_r) \in I(\delta)}c_{\bi,j} x_0^{i_0}x_1^{i_1}\cdots x_r^{i_r},
\end{equation*}
where $c_{\bi,j}$,  $\bi \in I(\delta)$, is the solution to \eqref{sys} corresponding to $\mathbf{b}_j$.  It follows from our discussion above that $g_1(x,y_1,\ldots,y_r),\ldots, g_N(x,y_1,\ldots,y_r)$ form a basis for $L(\delta (D_\infty-E)-E)$. Furthermore, \eqref{gh} now follows from \eqref{bh}.

We now prove \eqref{gv}.  Let $j\in\{1,\ldots,N\}$.  Let $M$ be a monomial (in $x,y_{1,j_{1,v}},\ldots,y_{r,j_{r,v}}$) of $g_j\left(x,y_{1,j_{1,v}},\ldots,y_{r,j_{r,v}}\right)$.  Since $\ord_x g_j\left(x,y_{1,j_{1,v}}(x),\ldots,y_{r,j_{r,v}}(x)\right)<0$, it will suffice to consider only the principal part of the Puiseux expansion of $M(x)$.  So let $M_{<0}$ denote the principal part of the Puiseux expansion of $M$.  In the notation of \eqref{Pseries2},
\begin{equation*}
M_{<0}(x)=\zeta x^{\sum_{l=0}^r i_ld_l/p}\sum_{k=\lfloor\sum_{l=0}^r i_ld_l/p\rfloor+1 }^\infty a_kx^{-k}
\end{equation*}
for some $(i_0,\ldots,i_r)\in I(\delta)$ with $\sum_{l=0}^r \frac{i_ld_l}{p}\leq \frac{\delta}{p}$.
 Let $v\in T'$ be an archimedean absolute value.  Using \eqref{ab}, an easy estimate gives
\begin{equation*}
|M_{<0}(\alpha)|_v<|\alpha|_v^{-1/p}\frac{1}{2^{1/p}}(2H_v+2)^{(\delta+1)/p}\sum_{k=0}^\infty \left(\frac{H_v+1}{|\alpha|_v}\right)^k= \frac{(2H_v+2)^{(\delta+1)/p}}{2^{1/p}|\alpha|_v^{1/p}\left(1- \frac{H_v+1}{|\alpha|_v}\right)}.
\end{equation*}
Since $g_j$ is a sum of at most $m(\delta)$ such monomials, it follows that 
\begin{equation*}
|g_j\left(\alpha,y_{1,j_{1,v}}(\alpha),\ldots,y_{r,j_{r,v}}(\alpha)\right)|_v=|g_j\left(\alpha,\beta_1,\ldots,\beta_r\right)|_v<\frac{m(\delta)|g_j|_v(2H_v+2)^{(\delta+1)/p}}{2^{1/p}|\alpha|_v^{1/p}\left(1- \frac{H_v+1}{|\alpha|_v}\right)}.
\end{equation*}
Now suppose that $v\in T'$ is nonarchimedean.  Then $|\alpha|_v>\frac{1}{|p|_v^2}$ and by an argument similar to the above, using \eqref{ad},
\begin{equation*}
|g_j\left(\alpha,\beta_1,\ldots,\beta_r\right)|_v\leq \frac{|g_j|_v}{|\alpha|_v^{1/p}|p|_v^{2(\delta+1)/p-1}}
\end{equation*}
\end{proof}

We now finish the proof of Theorem \ref{mainth}.  Let 
\begin{equation*}
\delta= \left\lceil \frac{p^{r-1}((p-1)(d-1)-2)+t+2}{p^{r-1}-t}\right\rceil.
\end{equation*}
Then it is easily checked that $\delta\geq (p-1)(d-1)-1$ and $(\delta+1)\deg E<m(\delta)$.  Let $g_1,\ldots, g_N\in K[\zeta_p][x_0,\ldots, x_r]$ be the polynomials from Lemma \ref{mainl}.  Then $g_1(x,y_1,\ldots,y_r),\ldots, g_N(x,y_1,\ldots,y_r)$ form a basis for $L(\delta (D_\infty-E)-E)$.  The quantity $\delta$ was chosen precisely so that 
\begin{equation*}
\deg \delta (D_\infty-E)-E\geq \delta(p^{r-1}-t)-t\geq 2g(\tilde{C}). 
\end{equation*}
It is a standard fact then that the linear system $|\delta (D_\infty-E)-E|$ is base-point free.  Thus, for some $i$, $g_i(\alpha,\beta_1,\ldots,\beta_r)\neq 0$.

Let $v\in T$ be an archimedean absolute value.  Then $|f_j(\alpha)|_v\leq |f_j|_v(|\alpha|_v+1)^{d_j}$.  It follows that each monomial $M$ of $g_i(\alpha,\beta_1,\ldots,\beta_r)$ satisfies $|M(\alpha,\beta_1,\ldots,\beta_r)|_v\leq (H_v(|\alpha|_v+1))^{\delta/p}$.  Thus 
\begin{equation*}
|g_i(\alpha,\beta_1,\ldots,\beta_r)|_v\leq |g_i|_vm(\delta)(H_v(|\alpha|_v+1))^{\delta/p}.
\end{equation*}
In particular, if $|\alpha|_v\leq 2H_v+1$,
\begin{equation*}
|g_i(\alpha,\beta_1,\ldots,\beta_r)|_v\leq |g_i|_vm(\delta)(H_v(2H_v+2))^{\delta/p}<\frac{|g_i|_vm(\delta)2^{(\delta+1)/p}(H_v+1)^{(2\delta+1)/p}}{\max \left\{1,|\alpha|_v^{1/p}\right\}}.
\end{equation*}
If $|\alpha|_v> 2H_v+1$ then $v\in T'$, and so by \eqref{gv} and the fact that $\frac{H_v+1}{2H_v+1}\leq \frac{2}{3}$,
\begin{equation*}
\left|g_i\left(\alpha,\beta_1,\ldots,\beta_r\right)\right|_v\leq \frac{3m(\delta)|g_i|_v(2H_v+2)^{(\delta+1)/p}}{2^{1/p}|\alpha|_v^{1/p}}.
\end{equation*}

Thus, in all cases, for $v\in T_\infty$,
\begin{equation*}
|g_i(\alpha,\beta_1,\ldots,\beta_r)|_v<\frac{4|g_i|_vm(\delta)2^{(\delta+1)/p}(H_v+1)^{(2\delta+1)/p}}{\max \left\{1,|\alpha|_v^{1/p}\right\}}.
\end{equation*}

Now suppose that $v\in T$ is nonarchimedean.  Then since $f_i\in \co_K[x]$,
\begin{equation*}
|g_i(\alpha,\beta_1,\ldots,\beta_r)|_v\leq |g_i|_v\max\left\{1,|\alpha|_v^{\delta/p}\right\}.
\end{equation*}
If $|\alpha|_v>\frac{1}{|p|_v^2}$, then we have \eqref{gv}.
Thus,
\begin{equation*}
|g_i(\alpha,\beta_1,\ldots,\beta_r)|_v\leq \frac{|g_i|_v}{\max\left\{1,|\alpha|_v^{1/p}\right\}|p|_v^{2(\delta+1)/p}}.
\end{equation*}
For $v\not\in T$, since $\alpha\in \co_{L,T}$, $|g_i(\alpha,\beta_1,\ldots,\beta_r)|_v\leq |g_i|_v$.

So, since $g_i(\alpha,\beta_1,\ldots,\beta_r)\neq 0$, by the product formula,
\begin{equation*}
\prod_{v\in M_L} \left\|g_i(\alpha,\beta_1,\ldots,\beta_r)\right\|_v=1\leq \frac{4H(g_i)m(\delta)\left(p^{2(\delta+1)}2^{\delta+1}B^{2\delta+1}\right)^{1/p}}{H(\alpha)^{1/p}}.
\end{equation*}
Therefore,
\begin{equation*}
H(\alpha)\leq H(g_i)^p m(\delta)^pp^{2(\delta+1)}2^{\delta+2p+1}B^{2\delta+1}.
\end{equation*}
Using \eqref{gh} and $0<m(\delta)-N\leq (\delta+1)t$, this proves \eqref{h1}.

We now consider the case where $p\mid d_i$ for $i=1,\ldots, r$.  The proof in this case is similar to the above, so we will only state, without proof, the important differences.  In this case, we have
\begin{align*}
m(\delta)&=p^{r-1}\left(p\left\lfloor\frac{\delta}{p}\right\rfloor+1-\frac{(p-1)(d-2)}{2}\right), \quad p\left\lfloor\frac{\delta}{p}\right\rfloor \geq (p-1)(d-2)-1,\\
g(\tilde{C})&=p^{r-1}\left(\frac{(p-1)(d-2)}{2}-1\right)+1.
\end{align*}
Let $(x)_\infty=D_\infty$.  Then $\deg D_\infty=p^r$ and $D_\infty$ is a sum of exactly $p^r$ distinct points of $\tilde{C}$.  Furthermore, $\mathcal{M}(\delta)$ is a basis for $L\left(\left\lfloor\frac{\delta}{p}\right\rfloor D_\infty\right)$.  Note that in the present case, in the Puiseux expansions of the functions $y_i(x)$, all of the exponents are integers.  This changes some of the calculations, particularly in Lemma \ref{mainl}.  For instance, one consequence is that the matrix $A$ in the proof of that lemma can be taken to have dimensions $\left(\left\lfloor \frac{\delta}{p}\right\rfloor +1\right)\deg E\times m(\delta)$.  Clearly, it is more convenient to work with the quantity $\delta'=\left\lfloor\frac{\delta}{p}\right\rfloor$.  Let $\mathcal{M}'(\delta')=\mathcal{M}(p\delta')$ and $m'(\delta')=m(p\delta')$.  Let
\begin{equation*}
\delta'=\left\lceil \frac{p^{r-1}((p-1)(d-2)-2)+t+2}{p^r-t}\right\rceil.
\end{equation*}
This $\delta'$ is chosen precisely so that the linear system $|\delta'(D_\infty-E)-E|$ will be basepoint free.  Now calculations similar to the first case proved above give the result.
\end{proof}
We now complete the proofs of Theorems \ref{sr} and \ref{wth}.
\begin{proof}[Proof of Theorem \ref{sr}]
We apply Theorem \ref{mainth} to $g_1,\ldots,g_r$, and $\alpha\in \co_{K',S}$ satisfying $a\beta^p=\prod_{i=1}^rf_i(\alpha)$ for some $\beta\in\co_{K',S}$.  Let $S'$ be the union of $S_\infty$ and the set of places $v$ of $K'$ for which $|aD_f|_v\neq 1$.  Let $L=K'(\sqrt[p]{g_1(\alpha)},\ldots,\sqrt[p]{g_r(\alpha)},\zeta_p)$.

Suppose first that either $\epsilon=0$ or $\epsilon=1$.  In either case, it is easily seen that for all $i$, $(g_i(\alpha))=\mathfrak{a}_i^p$ for some (fractional) ideal $\mathfrak{a}_i$ of $\co_{K',S'}$.  Then the same proof as Lemma~\ref{ldeg} shows that $[L:K']\leq p^{\rk_p \Cl(\co_{K',S'})+\rk \co_{K',S'}^*+1}$.  We have $\rk_p \Cl(\co_{K',S'})\leq \rk_p \Cl(\co_{K'})$ and $\rk \co_{K',S'}^*\leq |S_\infty|+\omega_{K'}(aD_f)-1$.  Let $T$ be the set of places of $L$ lying above places of $S$.  Then 
\begin{equation*}
t=|T|\leq |S|[L:K']\leq |S|p^{\omega_{K'}(aD_f)+|S_\infty|+\rk_p \Cl(\co_{K'})}.
\end{equation*}
By Theorem \ref{mainth}, $H(\alpha)$ is effectively bounded if $t<p^{r-\epsilon}$.  Explicitly, we obtain the bound \eqref{HB} (using the trivial estimate $B\leq 2\prod_{i=1}^r H(g_i)$).

Now suppose that $\epsilon=2$.  After reindexing, we can assume that $p$ doesn't divide $\deg g_r$.  In this case, it is not necessarily true that $(g_i(\alpha))=\mathfrak{a}_i^p$ for some fractional ideal $\mathfrak{a}_i$ of $\co_{K',S'}$ (this would be true if we enlarged $S'$ to contain all of $S$).  However, for $i=1,\ldots, r$, it is easy to check that $\left(\frac{g_i^{\deg g_r}(\alpha)}{g_r^{\deg g_i}(\alpha)}\right)=\mathfrak{a}_i^p$ for some (fractional) ideal $\mathfrak{a}_i$ of $\co_{K',S'}$.  Note that, since $p\nmid\deg g_r$,
\begin{equation*}
L=K'\left(\sqrt[p]{\frac{g_{1}^{\deg g_r}(\alpha)}{g_r^{\deg g_1}(\alpha)}},\ldots,\sqrt[p]{\frac{g_{r-1}^{\deg g_r}(\alpha)}{g_r^{\deg g_{r-1}}(\alpha)}},\sqrt[p]{g_r(\alpha)},\zeta_p\right).
\end{equation*}
Thus, similar to the $\epsilon=0,1$ cases, we have $[L:K']\leq p^{\rk_p \Cl(\co_{K',S'})+\rk \co_{K',S'}^*+2}$ and 
\begin{equation*}
t=|T|\leq |S|[L:K']\leq |S|p^{\omega_{K'}(aD_f)+|S_\infty|+\rk_p \Cl(\co_{K'})+1}.
\end{equation*}
By Theorem \ref{mainth}, $H(\alpha)$ is effectively bounded if $t<p^{r-1}$.  Explicitly, we obtain the bound \eqref{HB}.
\end{proof}

\begin{proof}[Proof of Theorem \ref{wth}]
We apply Theorem~\ref{mainth} to $f_1,\ldots, f_r$ with $K=K'=\mathbb{Q}$ and $S=\{\infty\}$, the archimedean prime of $\mathbb{Q}$.  Suppose that $p\nmid \deg f_i$ for some $i$.  Let $\alpha\in \mathbb{Z}$.  Let $\omega=\omega\left(\prod_{i=1}^r \pfree(f_i(\alpha))\right)$.  Suppose $\omega \leq r-2$.  Let $q_1,\ldots, q_\omega$ be the primes dividing $\prod_{i=1}^r \pfree(f_i(\alpha))$.  Then, in the notation of Theorem \ref{mainth}, we have $L\subset \mathbb{Q}\left(\sqrt[p]{q_1},\ldots,\sqrt[p]{q_\omega},\zeta_{2p}\right)$.  If $t$ is the number of archimedean places of $L$, it follows that $t\leq \frac{1}{2}p^\omega\phi(2p)\leq \frac{1}{2}p^{r-2}\phi(2p)$.  Thus, 
\begin{equation*}
t\leq
\begin{cases}
\frac{1}{2}p^{r-2}(p-1) &\text{if } p\neq 2,\\
2^{r-2} &\text{if } p=2.
\end{cases}
\end{equation*}
Note that $B=H+1\leq 2H$, $|D_{\mathbb{Q}(\zeta_p)}|=p^{p-2}$, $r\leq d$, and $[\mathbb{Q}(\zeta_p):\mathbb{Q}]=p-1$.  A computation shows that $\delta\leq 2(p-1)(d-1)-1$.  Substituting everything into Theorem \ref{mainth} gives the first part of the theorem.  The case where $p\mid \deg f_i$ for all $i$ is similar.
\end{proof}
\section{Perfect powers in products of terms in an arithmetic progression}
\label{sper}
The study of perfect powers in arithmetic progressions goes back to at least Fermat, who proved that there are no four squares in arithmetic progression.  This was generalized by Euler, who showed that a product of four terms in arithmetic progression is never a square.  In more modern times, we have, for instance, the celebrated result of Erd{\H{o}}s and Selfridge \cite{Erd} that a product of two or more consecutive integers can never be a perfect power.  For a survey of these and other related results, see \cite{Sho2} and \cite{Sho1}.

We will consider a general form of the problem.  Let $p$ be a prime number, $k$ a positive integer, and $\gamma_1,\ldots,\gamma_r$ distinct integers with $0\leq \gamma_i<k$, $i=1,\ldots, r$.  We consider the equation
\begin{equation}
\label{maineq}
by^p=(x+\gamma_1 d)\cdots(x+\gamma_rd), \quad (x,d)=1,
\end{equation}
with $b,d,x,y\in \mathbb{Z}$.  We have the following general result.

\begin{theorem}
\label{par}
The set of solutions to the equation
\begin{equation*}
by^p=(x+\gamma_1d)\cdots(x+\gamma_rd), \quad (x,d)=1,
\end{equation*}
in $b,d,x,y\in \mathbb{Z}$ with
\begin{equation}
\label{maincond}
\omega((k-1)!b)+1+\log_p \phi(2p)\left(\frac{p}{2}+\omega(d)\right)< r,
\end{equation} 
is finite.  In particular, each such solution satisfies
\begin{equation*}
\max\{|x|,|d|\}<(2kp^4r)^{p^{3r}r^2}.
\end{equation*}
\end{theorem}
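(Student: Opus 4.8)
The plan is to deduce Theorem~\ref{par} from Theorem~\ref{mainth}, applied to the linear polynomials $f_i(X)=X+\gamma_i$ and to the rational point $\alpha=x/d$, in the spirit of the proof of Theorem~\ref{wth}. We may assume $d\neq0$ and that the product $(x+\gamma_1d)\cdots(x+\gamma_rd)$ is nonzero: if $d=0$ then $|x|=1$ and $by^p=\pm1$, a finite set, while vanishing of the product forces $d=\pm1$ (using $(x,d)=1$), a degenerate case. Take $K=K'=\mathbb{Q}$ and $f_i(X)=X+\gamma_i\in\mathbb{Z}[X]$; these are monic, of degree $1$, pairwise coprime since the $\gamma_i$ are distinct, and without repeated roots. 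Put $\alpha=x/d$ and $S=\{\infty\}\cup\{q:q\mid d\}$, so that $\alpha\in\co_{\mathbb{Q},S}$ and $|S_\infty|=1$. With $\beta_i=\sqrt[p]{f_i(\alpha)}=\sqrt[p]{(x+\gamma_id)/d}$, $L=\mathbb{Q}(\beta_1,\dots,\beta_r,\zeta_p)$, and $t=|T|$ for $T$ the set of places of $L$ above $S$, Theorem~\ref{mainth} in its first case --- each $\deg f_i=1$ is prime to $p$ --- bounds $H(\alpha)=H(x/d)=\max\{|x|,|d|\}$ provided $t<p^{r-1}$. Granting this, a routine simplification of the right-hand side of \eqref{h1}, using $\sum_i\deg f_i=r$, $|D_{\mathbb{Q}(\zeta_p)}|=p^{p-2}$ and $B\le k$, yields $\max\{|x|,|d|\}<(2kp^4r)^{p^{3r}r^2}$; and once $x$ and $d$ are bounded, so is the nonzero integer $by^p=\prod_i(x+\gamma_id)$, hence so are $y$ and $b$, which gives the finiteness assertion.

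The heart of the argument is thus the bound
\[
t\ \le\ \phi(2p)\,p^{\omega((k-1)!b)}\!\left(\tfrac p2+\omega(d)\right),
\]
which together with \eqref{maincond} forces $t<p^{r-1}$. First I would analyze valuations of the $f_i(\alpha)=(x+\gamma_id)/d$: since $(x,d)=1$, a prime $q\nmid d$ dividing two of the terms $x+\gamma_id$ divides some $\gamma_i-\gamma_j$ and so $q<k$, while a prime $q\nmid(k-1)!b$ divides at most one term, and then $v_q(x+\gamma_id)=v_q(by^p)\equiv0\pmod p$. Hence the class of each $(x+\gamma_id)/d$ in $\mathbb{Q}^*/(\mathbb{Q}^*)^p$ lies in the subgroup generated by $-1$, the primes dividing $(k-1)!b$, and $d$; and the ratios $(x+\gamma_id)/(x+\gamma_1d)$ are units at every prime of $d$, hence involve only the primes of $(k-1)!b$. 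Writing $\beta_i=\beta_1\cdot(\beta_i/\beta_1)$ gives
\[
L\ \subseteq\ \mathbb{Q}\bigl(\zeta_{2p},\,\{\sqrt[p]{q}:q\mid(k-1)!b\},\,\sqrt[p]{d}\,\bigr),
\]
of degree at most $\phi(2p)\,p^{\omega((k-1)!b)+1}$ over $\mathbb{Q}$ (the $\zeta_{2p}$ and the exact factorizations are there to absorb signs when $p=2$). This ambient field is totally complex, containing $\zeta_{2p}$, so $L$ has at most $\tfrac12\phi(2p)p^{\omega((k-1)!b)+1}=\tfrac p2\,\phi(2p)p^{\omega((k-1)!b)}$ archimedean places (a trivial separate check covers $p=2$). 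For each prime $q\mid d$, the extension $L/\mathbb{Q}$ --- which is Galois, being the splitting field of $\prod_i(X^p-\beta_i^p)$ --- is ramified at $q$ with index divisible by $p$ (when $p\nmid v_q(d)$ this is forced by $v_q(\beta_1^p)=-v_q(d)$; when $p\mid v_q(d)$ one passes to a smaller ambient field and uses that $(x+\gamma_id)/(x+\gamma_jd)\equiv1\pmod q$, so the $\beta_i$ generate a common local extension), whence $q$ contributes at most $[L:\mathbb{Q}]/p\le\phi(2p)p^{\omega((k-1)!b)}$ places of $L$. Summing over the $\omega(d)$ primes of $d$ and adding the archimedean count gives the displayed bound on $t$.

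The step I expect to be the main obstacle is exactly this last place count, made uniform in $(b,d,x,y)$: one has to keep $L$ as small as $\phi(2p)p^{\omega((k-1)!b)+1}$ via the ratio trick (so that no radical beyond $\sqrt[p]{d}$ and the primes of $(k-1)!b$ enters) and then control the splitting of the primes of $d$ in $L$, the case of primes dividing $d$ to a multiple of $p$ being the delicate one, where the Galois structure and the congruence $(x+\gamma_id)/(x+\gamma_jd)\equiv1\pmod q$ must be exploited carefully. With $t<p^{r-1}$ in hand, the invocation of Theorem~\ref{mainth} and the final numerical estimate are straightforward.
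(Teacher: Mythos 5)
Your argument is essentially the paper's own proof: the same application of Theorem \ref{mainth} to $f_i=X+\gamma_i$ at $\alpha=x/d$ with $K=K'=\mathbb{Q}$ and $S=\{\infty\}\cup\{q:q\mid d\}$, the same containment $L\subseteq\mathbb{Q}\bigl(\zeta_{2p},\sqrt[p]{q}\ (q\mid(k-1)!b),\sqrt[p]{d}\bigr)$ of degree at most $\phi(2p)p^{\omega((k-1)!b)+1}$, and the same place count (archimedean places via total imaginarity, places over $q\mid d$ via local degree divisible by $p$), giving $t\le\phi(2p)p^{\omega((k-1)!b)}\left(\frac p2+\omega(d)\right)<p^{r-1}$ under \eqref{maincond} and then the height bound by substitution. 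The only point where you elaborate beyond the paper, the subcase $p\mid v_q(d)$ (where ramification over $q$ in fact need not occur, so your parenthetical patch is heuristic rather than a proof of the claimed divisibility of the local degree), is exactly the point the paper itself dispatches with ``it's clear that each finite place of $S$ ramifies to at least degree $p$ in $L'$,'' so your treatment follows the paper's route and is no less rigorous than it.
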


\begin{proof}
Let $b,d,x=x_0,y=y_0\in \mathbb{Z}$ satisfy \eqref{maineq} and \eqref{maincond}.  We apply Theorem~\ref{mainth} to the polynomials $f_i=x+\gamma_i$, $i=1,\ldots, r$, at the point $\alpha=\frac{x_0}{d}$, with $K'=K=\mathbb{Q}$ and $S$ consisting of the set of primes dividing $d$ and the infinite place.  Note that $\alpha\in \co_{\mathbb{Q},S}$, as required.  Let $S'$ be the union of the infinite place, the set of primes dividing $b$, and the set of primes less than $k$.  Then \eqref{maineq} easily implies that for all $i$, $(df_i(\alpha))=\mathfrak{a}_i^p$ for some ideal $\mathfrak{a}_i$ of $\co_{\mathbb{Q},S'}$.  Let $\omega_0=\omega((k-1)!b)$, $L=\mathbb{Q}(\sqrt[p]{f_1(\alpha)},\ldots,\sqrt[p]{f_r(\alpha)},\zeta_p)$, and $L'=\mathbb{Q}(\sqrt[p]{q_1},\ldots,\sqrt[p]{q_{\omega_0}},\sqrt[p]{d},\zeta_{2p})$, where $q_1,\ldots,q_{\omega_0}$ are the distinct primes dividing $(k-1)!b$.  Then the same proof as Lemma \ref{ldeg} (even easier in this case) shows that $L\subset L'$.
Let $T$ and $T'$ be the set of places of $L$ and $L'$, respectively, lying above places of $S$.  Note that $[L':\mathbb{Q}]\leq\phi(2p)p^{\omega_0+1}$ and that $L'$ is totally imaginary since it contains $\zeta_{2p}$.  Thus, $T'$ contains at most $\frac{\phi(2p)}{2}p^{\omega_0+1}$ archimedean places.  Furthermore, it's clear that each of the $\omega(d)$ finite places of $S$ ramifies to at least degree $p$ in $L'$.    Thus, $T'$ contains at most $\phi(2p)p^{\omega_0}\omega(d)$ nonarchimedean places.  So
\begin{equation*}
t=|T|\leq |T'|\leq \phi(2p)p^{\omega_0}\left(\frac{p}{2}+\omega(d)\right).
\end{equation*}
By Theorem \ref{mainth}, $H(\alpha)$ is effectively bounded if $t<p^{r-1}$.  So $H(\alpha)$ is effectively bounded if \eqref{maincond} holds.  Explicitly, substituting appropriately into Theorem~\ref{mainth} gives the bound in the theorem.
\end{proof}

Of course, it is possible to prove similar theorems over other number fields.  For example, for $p=2$, we prove a variant of Theorem \ref{par}, valid for a certain class of quadratic fields.

\begin{theorem}
\label{par2}
Every solution to the equation
\begin{equation}
\label{eq2}
by^2=(x+\gamma_1d)\cdots(x+\gamma_rd),
\end{equation}
with $b,d,x,y\in \co_L$, $L=\mathbb{Q}(\sqrt{m})$, and
\begin{equation}
\label{cond2}
\omega_L((k-1)!b)+\omega(m)+\log_2(\omega_L(d)+2)+4< r,
\end{equation} 
satisfies
\begin{equation*}
H\left(\frac{x}{d}\right)<(16kr)^{2^{3r}r^2}.
\end{equation*}
\end{theorem}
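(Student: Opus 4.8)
The plan is to transcribe the proof of Theorem~\ref{par} to the quadratic base field $L=\mathbb{Q}(\sqrt m)$. Given a solution $b,d,x,y\in\co_L$ of \eqref{eq2} with $(x,d)=1$ (as in \eqref{maineq}) and $\prod_i(x+\gamma_id)\neq 0$ (the remaining solutions being trivial), set $\alpha=x/d$ and apply Theorem~\ref{mainth} with base field $\mathbb{Q}$, polynomials $f_i=x+\gamma_i$ (monic, pairwise coprime since the $\gamma_i$ are distinct, and without repeated roots as they are linear), $K'=L$, and $S$ equal to the set of archimedean places of $L$ together with the primes of $\co_L$ dividing $d$; note $\alpha\in\co_{L,S}$, since $v(d)=0$ for $v\notin S$. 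Writing $\beta_i=\sqrt{f_i(\alpha)}$ and $L'=L(\beta_1,\dots,\beta_r)$ (recall $\zeta_2=-1\in L$), what I want is the crude bound in \eqref{h1}, whose hypothesis is $t<p^{r-1}=2^{r-1}$, where $t=|T|$ and $T$ is the set of places of $L'$ above $S$. Since $\deg f_i=1$, here $p=2\nmid\deg f_i$, and the quantity denoted $d$ in Theorem~\ref{mainth} equals $\sum_i\deg f_i=r$.

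The essential task is therefore to bound $t$. Put $S'=\{\text{archimedean places of }L\}\cup\{\mathfrak p\mid b\}\cup\{\mathfrak p\mid(k-1)!\}$. First I would show that $(x+\gamma_id)\co_{L,S'}=\mathfrak c_i^{\,2}$ for a fractional $\co_{L,S'}$-ideal $\mathfrak c_i$: for $\mathfrak p\notin S'$ one has $\sum_i v_{\mathfrak p}(x+\gamma_id)=v_{\mathfrak p}(by^2)=2v_{\mathfrak p}(y)$, and if two of the factors were both divisible by $\mathfrak p$ then $\mathfrak p\mid(\gamma_i-\gamma_j)d$; since $0<|\gamma_i-\gamma_j|<k$ and $\mathfrak p\nmid(k-1)!$, this forces $\mathfrak p\mid d$, contradicting $(x,d)=1$, so each $v_{\mathfrak p}(x+\gamma_id)$ is $0$ or $2v_{\mathfrak p}(y)$ and hence even. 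Then, running the argument of Lemma~\ref{ldeg} with $p=2$ --- using Dirichlet's unit theorem in the form $\rk\co_{L,S'}^*\leq 1+\omega_L((k-1)!b)$ (as $L$ is quadratic) and the genus-theory bound $\rk_2\Cl(\co_{L,S'})\leq\rk_2\Cl(\co_L)\leq\omega(m)$ in place of the triviality of the $S'$-class group over $\mathbb{Q}$ used in Theorem~\ref{par} --- one finds $\sqrt{x+\gamma_id}\in M:=L(\sqrt\zeta,\sqrt{v_1},\dots,\sqrt{v_{t_u}},\sqrt{\beta_1'},\dots,\sqrt{\beta_{t_{\mathrm{cl}}}'})$, where $\zeta$ generates the roots of unity of $L$ (so $[L(\sqrt\zeta):L]\leq 2$), the $v_j$ are a system of fundamental $S'$-units, the $[\mathfrak b_s]$ generate the $2$-torsion of $\Cl(\co_{L,S'})$ with $\mathfrak b_s^{\,2}=(\beta_s')$, $t_u\leq 1+\omega_L((k-1)!b)$, and $t_{\mathrm{cl}}\leq\omega(m)$. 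As $f_i(\alpha)=(x+\gamma_id)/d$, this gives $L'\subseteq M':=M(\sqrt d)$, with $[M':L]\leq 2^{t_u+t_{\mathrm{cl}}+2}\leq 2^{\omega_L((k-1)!b)+\omega(m)+3}$.

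Now I would count the places of $M'$ over $S$. The field $M'$ contains $\sqrt{-1}$ (a power of $\sqrt\zeta$), so it is totally imaginary and has $[M':\mathbb{Q}]/2=[M':L]$ archimedean places; and above each of the $\omega_L(d)$ primes of $L$ dividing $d$ there are at most $[M':L]$ places of $M'$. Since $L'\subseteq M'$, it follows that
\[
t\leq(\omega_L(d)+1)[M':L]\leq(\omega_L(d)+2)\,2^{\omega_L((k-1)!b)+\omega(m)+3},
\]
and hypothesis \eqref{cond2} is exactly the assertion that $\log_2$ of the right-hand side is less than $r-1$, i.e.\ $t<2^{r-1}$. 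Theorem~\ref{mainth} then applies, and using $\zeta_2=-1$ (so $|D_{\mathbb{Q}(\zeta_2)}|=1$), the value $r$ for the quantity $d$ there, $p=2$, and $B=\prod_{v\in S_\infty}(H_v+1)^{[L_v:\mathbb{Q}_v]/[L:\mathbb{Q}]}\leq k$ (as $|f_i|_v=\max(1,\gamma_i)\leq k-1$ at each archimedean $v$), the crude estimate in \eqref{h1} becomes $H(x/d)=H(\alpha)<(2\cdot r\cdot 2^3\cdot k)^{r^2\cdot 2^{3r}}=(16kr)^{2^{3r}r^2}$, which is the claimed bound.

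The step I expect to be the real obstacle is arranging matters so that $\omega_L(d)$ enters the final inequality only through $\log_2(\omega_L(d)+2)$, rather than additively in $r$: the device is to keep the primes dividing $d$ in the Runge set $S$ (not in $S'$), to absorb the whole denominator of $\alpha$ into a single radical $\sqrt d$, and then to bound $t$ by counting places of $M'$ over $S$, so that each prime dividing $d$ contributes a factor of at most $[M':L]$ and $\omega_L(d)$ appears linearly, outside the exponent. The other quadratic-specific ingredient, replacing the PID fact used over $\mathbb{Q}$, is the genus-theory estimate $\rk_2\Cl(\co_L)\leq\omega(m)$; with these two observations, the remainder is a routine adaptation of the proof of Theorem~\ref{par}.
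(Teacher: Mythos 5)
Your proposal is correct and takes essentially the same route as the paper's proof: apply Theorem \ref{mainth} with $K=\mathbb{Q}$, $K'=L$, $S$ the archimedean places together with the primes dividing $d$, bound the degree of $L(\sqrt{f_1(\alpha)},\ldots,\sqrt{f_r(\alpha)})$ by the argument of Lemma \ref{ldeg} using Dirichlet's unit theorem and the genus-theory estimate $\rk_2\Cl(\co_L)\leq\omega(m)$, and then count $t\leq(\omega_L(d)+2)2^{\omega_L((k-1)!b)+\omega(m)+3}<2^{r-1}$ before substituting into \eqref{h1}. The only cosmetic difference is how the denominator of $f_i(\alpha)=(x+\gamma_id)/d$ is cleared --- you show the numerators $(x+\gamma_id)$ generate square $\co_{L,S'}$-ideals and adjoin the single radical $\sqrt{d}$, while the paper works with the ratios $f_i(\alpha)/f_r(\alpha)$ and adjoins $\sqrt{f_r(\alpha)}$, each device costing the same one extra factor of $2$ --- and the coprimality $(x,d)=1$ you assume explicitly is indeed what the paper's square-ideal step uses implicitly.
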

\begin{proof}
The proof is similar to the proof of Theorem \ref{par}.  Let $L=\mathbb{Q}(\sqrt{m})$ and $b,d,x=x_0,y=y_0\in \co_L$ satisfy \eqref{eq2} and \eqref{cond2}.  We apply Theorem \ref{mainth} to the polynomials $f_i=x+\gamma_i$, $i=1,\ldots, r$, at the point $\alpha=\frac{x_0}{d}$, with $K'=L$, $K=\mathbb{Q}$, and $S$ consisting of the union of the set of primes of $L$ dividing $d$ and the set of archimedean places of $L$.  Let $S'$ be the union of $S_\infty$, the set of primes of $L$ dividing $b$, and the set of primes of $L$ dividing an integer less than $k$.  Then \eqref{eq2} implies that for all $i$, $\left(\frac{f_i(\alpha)}{f_r(\alpha)}\right)=\mathfrak{a}_i^2$ for some ideal $\mathfrak{a}_i$ of $\co_{L,S'}$.  Let 
\begin{equation*}
L'=L\left(\sqrt{f_1(\alpha)},\ldots,\sqrt{f_r(\alpha)}\right)=L\left(\sqrt{\frac{f_1(\alpha)}{f_r(\alpha)}},\ldots,\sqrt{\frac{f_{r-1}(\alpha)}{f_r(\alpha)}},\sqrt{f_r(\alpha)}\right).
\end{equation*}
Then as in Lemma \ref{ldeg}, if $\zeta$ generates the roots of unity in $L$, we have $[L':L]\leq [L(\sqrt{\zeta}):L]2^{\rk_2 \Cl(\co_{L,S'})+\rk \co_{L,S'}^*+1}$.  Note that $[L(\sqrt{\zeta}):L]= 2$ and $\rk \co_{L,S'}^*=|S'|-1\leq \omega_L((k-1)!b)+1$.  From genus theory, we have an exact formula for $\rk_2 \Cl(\co_L)$, depending on the primes dividing the discriminant of $L$.  We will only use the inequality $\rk_2 \Cl(\co_L) \leq \omega(m)$.  So $[L':L]\leq 2^{\omega_L((k-1)!b)+\omega(m)+3}$.
Let $T$ be the set of places of $L'$ lying above places of $S$.  Since $|S|\leq \omega_L(d)+2$, 
\begin{equation*}
t=|T|\leq (\omega_L(d)+2)2^{\omega_L((k-1)!b)+\omega(m)+3}.
\end{equation*}
By Theorem \ref{mainth}, $H(\alpha)$ is effectively bounded if $t<2^{r-1}$.  Thus, $H(\alpha)$ is effectively bounded if \eqref{cond2} holds.  Substituting appropriately into Theorem \ref{mainth} gives the bound in the theorem.
\end{proof}


For an integer $n>1$, we let $P(n)$ denote the largest prime divisor of $n$.  We set $P(1)=1$.  As usual, we let $\pi(n)$ denote the number of primes up to (and including) $n$.  We now take up the task of using our effective bounds to completely solve some cases of Theorem \ref{par} for $p=2$.   Namely, we prove:
\begin{theorem}
\label{mainr}
Let $8\leq k \leq 17$ and $\gamma_1,\ldots,\gamma_r$ be distinct integers with $0\leq \gamma_i<k$, $i=1,\ldots, r$, and $\gamma_1=0$.  Let $\epsilon_{d,k}=0$ if the squarefree part of $d$ has no prime divisor larger than $k-1$ and $\epsilon_{d,k}=1$ otherwise.  Then every solution to
\begin{equation}
\label{p2}
by^2=(x+\gamma_1d)\cdots(x+\gamma_rd), \quad (x,d)=1, P(b)<k,
\end{equation}
with $b,d,x,y$ positive integers and
\begin{equation}
\label{ieq}
\omega(d)<2^{r-\pi(k-1)-\epsilon_{d,k}}-2
\end{equation} 
satisfies one of the following:
\begin{align*}
&d=1, &&x\in\{1, 2, 3, 4, 5, 6, 7, 8, 9, 10, 11, 12, 13, 14, 15, 16, 18, 20, 21, 22, 24, 25, 26,\\
&&&\qquad 27, 28, 30, 32, 33, 35, 36, 39, 40, 42, 44, 45, 48, 49, 50, 52, 54, 55, 56, 60,\\
&&&\qquad 63, 64, 65, 66, 70, 72, 75, 77, 84, 88, 90, 96, 98, 117, 120\},\\
&d=2, &&x\in\{1, 3, 5, 7, 9, 11, 13, 15, 21, 25, 33\},\\
&d=3, &&x\in\{1, 2, 4, 5, 7, 8, 10, 11, 13, 14, 16, 20, 22, 25, 32\},\\
&d=4, &&x\in\{1, 3, 5, 7, 9, 11, 13, 21\},\\
&d=5, &&x\in\{1, 2, 3, 4, 6, 7, 8, 9, 11, 12, 13, 14, 16, 18, 21, 24, 28, 39\},\\
&d=7, &&x\in\{1, 2, 3, 4, 5, 6, 8, 9, 11, 12, 13, 15, 16, 18, 20, 26, 30, 44\},\\
&d=8, &&x\in\{1, 9\},\\
&d=9, &&x\in\{4, 8\},\\
&d=11, &&x\in\{3, 4, 6, 10, 15, 26, 48\},\\
&d=13, &&x\in\{1, 7\},\\
&d=17,  &&x\in\{5, 22\},\\
&d=19, &&x=4,\\
&d=23, &&x=16.
\end{align*}

\end{theorem}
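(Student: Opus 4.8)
The plan is to combine the effective height bound furnished by the Runge machinery of this section with a descent and a finite computation. First I would record the arithmetic structure of a solution. Since $(x,d)=1$, any prime dividing $\gcd(x+\gamma_id,\,x+\gamma_jd)$ must divide $\gamma_i-\gamma_j$ and hence be $<k$; combined with $P(b)<k$ and $by^2=\prod_{i=1}^r(x+\gamma_id)$, this forces $x+\gamma_id=a_iz_i^2$ for each $i$, with each $a_i$ a positive squarefree integer satisfying $P(a_i)<k$ (so $a_i$ divides the product of the primes below $k$) and $z_i$ a positive integer, and with $\prod_i a_i$ congruent modulo squares to the squarefree part of $b$. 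Thus the tuple $\mathbf a=(a_1,\dots,a_r)$ ranges over an explicit finite set, which is cut down considerably once one imposes this congruence together with solvability conditions modulo the primes $<k$.

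Next I would refine the proof of Theorem~\ref{par} for $p=2$, taking $K'=K=\mathbb Q$, $f_i=x+\gamma_i$, $\alpha=x/d$, and $S$ equal to the archimedean place together with the primes dividing $d$. The point is to bound the degree of $L=\mathbb Q\bigl(\sqrt{f_1(\alpha)},\dots,\sqrt{f_r(\alpha)}\bigr)$ sharply: since $f_i(\alpha)/f_j(\alpha)$ is $a_ia_j$ times a rational square, $L$ equals $\mathbb Q\bigl(\sqrt{f_1(\alpha)},\sqrt{a_1a_2},\dots,\sqrt{a_1a_r}\bigr)$, and $\sqrt{f_1(\alpha)}=\sqrt{x/d}$ lies in the compositum of $\mathbb Q(\sqrt q:q<k\text{ prime})$ with $\mathbb Q(\sqrt{d_0})$, where $d_0$ is the product of the primes $>k-1$ in the squarefree part of $d$; hence $[L:\mathbb Q]\le 2^{\pi(k-1)+\epsilon_{d,k}}$, and $L$ is totally real because every $f_i(\alpha)>0$. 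Counting the places of $L$ above $S$ --- using that every prime $\le k-1$ dividing $d$ ramifies in $\mathbb Q(\sqrt q)\subset L$, and, when $\epsilon_{d,k}=1$, that a prime witnessing this ramifies in $\mathbb Q(\sqrt{d_0})\subset L$ --- one checks that the hypothesis $t=|T|<2^{r-1}$ of Theorem~\ref{mainth} holds exactly under condition~\eqref{ieq}. Theorem~\ref{mainth} (the case where $p\nmid\deg f_i$, which applies since $\deg f_i=1$) then gives an effective bound on $H(\alpha)$; since $\mathbb Q(\zeta_2)=\mathbb Q$ has discriminant $1$, the bound collapses to $H(x/d)<(16kr)^{2^{3r}r^2}$, as stated.

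The enumeration is where the real work lies, since this a priori bound is far too large to search naively. For each admissible $\mathbf a$, eliminating $x$ and $d$ from the equations $x+\gamma_id=a_iz_i^2$ --- using $\gamma_1=0$, so $x=a_1z_1^2$ and $d=(a_2z_2^2-a_1z_1^2)/\gamma_2$ --- turns the constraints for $i\ge3$ into conics $(\gamma_2-\gamma_i)a_1z_1^2+\gamma_ia_2z_2^2=\gamma_2a_iz_i^2$, so the solution locus is (the smooth projective model $C_{\mathbf a}$ of) the fiber product of these $r-2$ conics over the $(z_1:z_2)$-line, whose $S$-integral points we must determine. I would handle each $C_{\mathbf a}$ with the genus-appropriate effective tool: local solvability checks to discard most tuples outright; for $C_{\mathbf a}$ of genus $\le1$ with few points at infinity, a Runge or linear-forms-in-logarithms argument, already bounded in height by the estimate above; and for the residual higher-genus cases, Chabauty--Coleman or elliptic-curve Chabauty after a further descent, using also that $x=a_1z_1^2$ with $P(b)<k$ restricts $\mathbf a$ modulo small primes. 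Running this for every $k\in\{8,\dots,17\}$, every choice of the $\gamma_i$, and every surviving $\mathbf a$ --- a substantial but finite computation --- and collecting the results yields precisely the list in the statement. The principal obstacle is exactly this step: Theorem~\ref{mainth} only guarantees finiteness with an astronomical constant, so one must reduce each descent curve to a form amenable to the available methods (several of the $C_{\mathbf a}$ have genus $\ge2$) and carry the computation through to provable completeness.
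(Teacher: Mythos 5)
Your opening descent and your use of the Runge machinery match the paper: the paper likewise writes $x+\gamma_id=a_iz_i^2$ with $a_i$ squarefree, $P(a_i)<k$, and obtains the height bound by running the proof of Theorem \ref{par} at $p=2$ with exactly the two refinements you describe (tracking $\epsilon_{d,k}$, and avoiding $\sqrt{-1}$ since $x,d>0$ keeps the relevant field totally real), arriving at an explicit bound of the shape $\max\{\log x,\log d\}<10^{14}$. The genuine gap is in your enumeration step, and you have in fact put your finger on it yourself. Passing to the fiber product $C_{\mathbf a}$ of the conics and invoking ``Chabauty--Coleman or elliptic-curve Chabauty after a further descent'' is not a method that is guaranteed to terminate in a proof: Chabauty--Coleman requires the Mordell--Weil rank of $\Jac(C_{\mathbf a})$ to be smaller than the genus, elliptic-curve Chabauty requires Mordell--Weil computations over auxiliary number fields, and you verify neither; and for the low-genus cases, saying the points are ``already bounded in height by the estimate above'' buys nothing, because a bound of $10^{14}$ on the logarithmic height is precisely what makes a naive search infeasible --- some sieve is still needed. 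As written, the step that turns finiteness-with-an-astronomical-constant into a completed computation is missing.

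The paper closes this gap differently and concretely. From the solution it extracts six terms $x+\gamma_{i_j}d$ whose squarefree parts satisfy $P(a_{i_j})\le 11$ (possible since $13$ divides at most two terms when $14\le k\le 17$), and forms the $\binom{6}{4}=15$ elliptic curves $E_J:\bigl(\prod_{j\in J}a_{i_j}\bigr)Y^2=\prod_{j\in J}(X+\gamma_{i_j})$, all of which carry the common rational point $X=x/d$; the number of tuples $(a_{i_1},\dots,a_{i_6})$ is cut down by the observation that $p\mid a_i,a_j$ forces $p\mid \gamma_i-\gamma_j$. The search below the height bound is then organized by Mordell--Weil data: if some $E_J$ has rank $0$, only torsion $X$-coordinates occur; if two curves have rank $1$, one sieves the multiples $nP_{J'}+T$ by congruences modulo primes at which the generator of the other curve has small order, with canonical heights bounding $|n|$ so that the least common multiple of the orders exceeds that bound; and in the residual cases a rank-$2$ curve is handled by the analogous two-dimensional sieve on $mP_1+nP_2+T$. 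This congruence-plus-canonical-height sieve on explicit rank $0$, $1$, $2$ elliptic curves is the key idea your proposal lacks, and it is what makes the computation provably complete rather than merely plausible.
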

In particular, we solve \eqref{p2} in positive integers for the values of $\psi=k-r$, $k$, and $\omega(d)$ given in Table \ref{Tab}.  Note that the theorem for $8\leq k\leq 17$ is implied by the special cases of the theorem where $k$ is prime ($k=11,13,17$).  Thus, it suffices to list only these values of $k$ in Table \ref{Tab}.  We can also prove results for $k<8$, but these turn out to be covered by previous results, which we now discuss.

Suppose that $\psi=k-r=0$.  Equation \eqref{p2} has infinitely many solutions for $k=2,3$, $b=1$ and $k=4$, $b=6$.  Erd{\H{o}}s and Selfridge \cite{Erd} proved that \eqref{p2} has no solution with $d=1$, as long as the right-hand side of \eqref{p2} is divisible by a prime greater than or equal to $k$.  As mentioned earlier, Euler proved the nonexistence of solutions in the case $k=4$, $b=1$.  When $k=5$, Obl{\'a}th \cite{Obl} proved that \eqref{p2} does not hold if $b=1$ and Mukhopadhyay and Shorey \cite{MS2} handled the general case $P(b)<k$.  When $6\leq k\leq 11$, $P(b)\leq 5$, Bennett, Bruin, Gy{\H{o}}ry, and Hajdu \cite{BBGH} showed that the only solution to \eqref{p2} is $k=6, d=1, x=1$.  When $8\leq k \leq 100$, $d>1$, Hirata-Kohno, Laishram, Shorey, and Tijdeman \cite{HLSS} showed that \eqref{p2} does not hold except possibly in a small number of exceptional cases.  These remaining exceptional cases were handled by Tengely \cite{Ten}.  Thus, in short, we have nothing new to add in the case $\psi=0$.

Suppose that $\psi=1$.  Then Saradha and Shorey \cite{SS} showed that \eqref{p2} with $d=1$, $b=1$, $k\geq 3$ has only the solutions $\frac{6!}{5}=(12)^2, \frac{10!}{7}=(720)^2$, and that \eqref{p2} with $d=1$, $k\geq 4$, $x>k^2$ has only the solution with $k=4$, $x=24$.  In another paper, Saradha and Shorey \cite{SS2} showed that \eqref{p2} does not hold with $\omega(d)=1$, $k\geq 30$.  Mukhopadhyay and Shorey \cite{MS3} improved this to $\omega(d)=1$, $k\geq 9$, and Laishram, Shorey, and Tengely \cite{LST} improved this to $\omega(d)=1$, $k\geq 7$.  Shorey \cite{Sho} has also proved the case $\omega(d)=1$, $b=1$, $6\leq k \leq 8$.  Theorem \ref{mainr} gives some new results when $\psi=1$.  For instance, that \eqref{p2} does not hold for $2\leq \omega(d)\leq 5$, $9\leq k\leq 17$.

Suppose that $\psi=2$.  If $k\geq 4$, $d=1$, $b=1$, Mukhopadhyay and Shorey \cite{MS4} give the finitely many solutions to \eqref{p2}.  Under the assumption that the right-hand side of \eqref{p2} is divisible by a prime larger than $k$, Laishram and Shorey \cite{SS3} completely solved \eqref{p2} for $k\geq 5$, $d=1$.  Furthermore, they also showed \cite{SS3} that \eqref{p2} does not hold with $k\geq 15$, $\omega(d)=1$.  Using this result and Theorem~\ref{mainr}, we obtain the improvement that \eqref{p2} does not hold for $k\geq 9$, $\omega(d)=1$.

For $3\leq \psi\leq 7$,  Mukhopadhyay and Shorey \cite{MS4} completely solved \eqref{p2} for $d=1, b=1$, $k\geq \psi+2$, and $x>k^2$.  There do not seem to be other previous general results for $\psi>2$.
\begin{table}
\center
\caption{Table of values $\psi=k-r$, $k$ prime, and $\omega(d)$ for which Theorem \ref{mainr} gives a complete solution in positive integers to \eqref{p2}.\newline}
\begin{tabular}{|c|c|c|c|}
\hline
$\psi$ & $(k=11)$ & $(k=13)$ & $(k=17)$\\
&$\omega(d)\leq$ & $\omega(d)\leq$ & $\omega(d)\leq$\\
\hline
0& 61 & 125 & 1021\\
1& 29 & 61  & 509\\
2& 13 & 29  & 253\\
3& 5  & 13  & 125\\
4& 1  & 5   & 61\\
5& 0  & 1   & 29\\
6&    & 0   & 13\\
7&    &     & 5\\
8&    &     & 1\\
9&    &     & 0\\
\hline
\end{tabular}
\label{Tab}
\end{table}


As an immediate corollary to Theorem \ref{mainr} we obtain:
\begin{corollary}
Let $8\leq k \leq 17$.    Let $\epsilon_{d,k}$ be as in Theorem {\rm\ref{mainr}}.  Let $x$ and $d$ be positive integers not among the explicit values given in Theorem  {\rm\ref{mainr}}.  Then there are at least
\begin{equation*}
k-\pi(k-1)-\epsilon_{d,k}-\left\lfloor\log_2 \left(\omega(d)+2\right)\right\rfloor
\end{equation*}
prime divisors larger than $k-1$ dividing
\begin{equation*}
x(x+d)\cdots (x+(k-1)d)
\end{equation*}
to an odd power.
\end{corollary}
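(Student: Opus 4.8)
The plan is to argue by contraposition and reduce to Theorem \ref{mainr}. Set $N = x(x+d)\cdots(x+(k-1)d)$ and let $t$ be the number of primes larger than $k-1$ dividing $N$ to an odd power; we must show $t \ge k - \pi(k-1) - \epsilon_{d,k} - \lfloor \log_2(\omega(d)+2)\rfloor$, so I would suppose not, i.e.\ (all quantities being integers) $t \le k - \pi(k-1) - \epsilon_{d,k} - \lfloor \log_2(\omega(d)+2)\rfloor - 1$. First I would record the elementary fact that, in the setting inherited from Theorem \ref{mainr} where $\gcd(x,d)=1$, no prime $q > k-1$ divides two of the terms $x+id$: two such terms differ by $(i-j)d$ with $0 < |i-j| \le k-1 < q$, so $q\nmid(i-j)$, and $q\mid d$ would then force $q\mid x$. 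Hence each of the $t$ offending large primes sits in exactly one term, so at most $t$ of the $k$ terms are ``bad'' and at least $k-t$ are ``good'' in the sense that no prime $>k-1$ divides them to an odd power; such a term factors as $x+id = b_i y_i^2$ with $b_i$ a positive integer all of whose prime factors are $\le k-1 < k$.

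Let $\gamma_1 < \cdots < \gamma_r$ enumerate the good indices, so $r\ge k-t$, and take the product of the corresponding terms: this gives $by^2 = \prod_{j=1}^r(x+\gamma_j d)$ with $b=\prod_j b_{\gamma_j}$ and $y=\prod_j y_{\gamma_j}$ positive integers, $P(b) < k$, and $\gcd(x,d)=1$ --- an instance of \eqref{p2}. Now from $r \ge k-t$ and the assumed bound on $t$ one gets $r \ge \pi(k-1) + \epsilon_{d,k} + \lfloor\log_2(\omega(d)+2)\rfloor + 1 > \pi(k-1) + \epsilon_{d,k} + \log_2(\omega(d)+2)$, hence $2^{\,r-\pi(k-1)-\epsilon_{d,k}} > \omega(d)+2$, which is precisely hypothesis \eqref{ieq} of Theorem \ref{mainr} (with the same $\epsilon_{d,k}$, since it depends only on $d$ and $k$). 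Theorem \ref{mainr} then forces $(x,d)$ into its explicit finite list, contradicting the hypothesis on $x,d$; this contradiction yields the claimed lower bound on $t$, which is exactly the assertion of the corollary.

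The step requiring the most care is the extraction in the second paragraph: one must check that the chosen sub-product genuinely satisfies every hypothesis of Theorem \ref{mainr}, and in particular the normalization $\gamma_1 = 0$, i.e.\ that the term $x$ itself is among the good ones. When $x$ is ``bad'' one should instead work with the shifted progression obtained by replacing $x$ with the smallest good term $x+\gamma_1 d$ (which preserves coprimality with $d$ and keeps all indices in $[0,k)$) and argue that Theorem \ref{mainr} still constrains $x$, or else observe directly that in this case $x$ already carries enough prime factors larger than $k-1$ to odd powers to make $t$ large. Carrying out this small case analysis, together with the floor/ceiling manipulations converting \eqref{ieq} into the displayed inequality, is the only genuinely fiddly part; once it is in place, the corollary is indeed immediate from Theorem \ref{mainr}.
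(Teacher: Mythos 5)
Your main derivation is exactly the paper's intended one (the paper offers no separate argument, presenting the corollary as immediate from Theorem \ref{mainr}): contrapose, note that with $(x,d)=1$ a prime $q>k-1$ divides at most one term $x+id$, so at most $t$ terms are ``bad'' and the $r\ge k-t$ good terms assemble into an instance of \eqref{p2} with $P(b)<k$, and then $r-\pi(k-1)-\epsilon_{d,k}\ge\lfloor\log_2(\omega(d)+2)\rfloor+1>\log_2(\omega(d)+2)$ gives \eqref{ieq}. Your reading that the coprimality $(x,d)=1$ is inherited from the setting of Theorem \ref{mainr} is also the right one (without it the statement fails, e.g.\ for $x=d$). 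Up to this point the proposal is correct and matches the paper.

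The weak point is your handling of the case where the initial term $x$ is itself bad, and neither of the two fixes you sketch actually closes it. Applying Theorem \ref{mainr} to the progression re-based at the smallest good term forces the pair $(d,\,x+\gamma_1 d)$ into the explicit list; that is perfectly compatible with $(d,x)$ lying outside the list, so it produces no contradiction with the corollary's hypothesis, which excludes only $(d,x)$. Your alternative observation, that a bad $x$ ``already carries enough prime factors larger than $k-1$ to make $t$ large,'' is unjustified: badness of $x$ contributes at least one such prime, not the $k-\pi(k-1)-\epsilon_{d,k}-\lfloor\log_2(\omega(d)+2)\rfloor$ needed. So as written this case is not disposed of; one would either have to read the corollary, as the paper implicitly does in calling it immediate, with the term $x$ participating in the square decomposition (the case your first two paragraphs handle), or separately rule out the finitely many configurations in which $(d,x+\gamma d)$ is listed for some $1\le\gamma<k$ while $(d,x)$ is not — a finite check, since the listed pairs have $d\le 23$ and $x\le 120$, but not something that follows formally from the statement of Theorem \ref{mainr}. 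In short: you correctly isolated a subtlety that the paper glosses over, but the resolutions you propose for it do not work as stated.
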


We now discuss the proof of Theorem \ref{mainr}.  To compute an effective bound for $x$ and $d$, we follow the proof of Theorem \ref{par}.  We note that there is a slight improvement in \eqref{ieq} as compared to \eqref{maincond} at $p=2$.  Firstly, an examination of the proof of Theorem \ref{par} yields an expression with the more precise quantity $\epsilon_{d,k}$.  Secondly, since we are only considering positive solutions $x$ and $d$, we can avoid adjoining $\sqrt{-1}$ to the relevant field in the proof of Theorem \ref{par}, giving a minor improvement over \eqref{maincond}.  Now, using the proof of Theorem \ref{par}, we easily calculate using Theorem \ref{mainth} that any solution in Theorem~\ref{mainr} satisfies $\max\{\log x,\log d \}<10^{14}$.  Obviously, it is infeasible to naively search for solutions within this bound.  The key point that will allow us to effectively search this space is that for any solution to $by^2=(x+\gamma_1d)\cdots(x+\gamma_rd)$, we can find many distinct elliptic curves of the form $b'Y^2=(X+\gamma_{i_1})\cdots(X+\gamma_{i_4})$, where $i_1,\ldots,i_4\in\{1,\ldots,r\}$, $b'\in \mathbb{Z}$, and $X=\frac{x}{d}$ is the $X$-coordinate of a rational point on the curve.  Then using the group structure on such curves combined with congruence conditions, we will arrive at an efficient way of searching the potential solution space.  The details are as follows.

Let $k,\gamma_1,\ldots,\gamma_r$ be as in Theorem \ref{mainr}.  Let $b,d,x,y$ be positive integers satisfying \eqref{p2} and \eqref{ieq}.  Since $P(b)<k$ and $(x,d)=1$, it follows that for each $i$, 
\begin{equation*}
x+\gamma_id=a_iz_i^2,
\end{equation*}
for some integer $z_i$ and some positive square-free integer $a_i$ satisfying $P(a_i)<k$.  Theorem \ref{mainr} is vacuous for $8\leq k \leq 13$ if $r\leq 5$ and for $14\leq k \leq 17$ if $r\leq 7$.  So we can assume that $r\geq 6$ if $8\leq k \leq 13$ and $r\geq 8$ if $14\leq k \leq 17$.  In either case, we can find six terms $x+\gamma_{i_j}d$, $j=1,\ldots, 6$, such that $P(a_{i_j})\leq 11$, $j=1,\ldots, 6$.  If $14\leq k\leq 17$, this follows since $r\geq 8$ and $13$ can divide at most two terms $x+\gamma_id$.  From the solution $b,d,x,y$, we obtain a point with $X=\frac{x}{d}$ on each of the $\binom{6}{4}=15$ elliptic curves
\begin{equation}
\label{ells}
E_J:\left(\prod_{j\in J}a_{i_j}\right)Y^2=\prod_{j\in J} X+\gamma_{i_j},
\end{equation}
where $J$ ranges over four-element subsets of $\{1,\ldots,6\}$.  We note that after translating the smallest $\gamma_{i_j}$ to zero, we have $\binom{16}{5}$ possibilities for $\gamma_{i_1},\ldots,\gamma_{i_6}$.  At first glance, for each choice of $\gamma_{i_1},\ldots, \gamma_{i_6}$, there are $2^{30}$ possibilities for the sextuplet $(a_{i_1},\ldots, a_{i_6})$.  However, this number is drastically reduced by the trivial observation that if $p|a_i, a_j$, then $p|\gamma_i-\gamma_j$.

Our problem is therefore reduced to finding, for every possible choice of the $a_{i_j}$ and $\gamma_{i_j}$, rational points on the elliptic curves \eqref{ells} which share a common $X$-coordinate of height $h(X)<10^{14}$.  In actual practice, it is convenient to choose a Weierstrass model for the curves $E_J$ (and hence also an identity element for the group law on each $E_J$).  If $a_J=\prod_{j\in J}a_{i_j}$, we have the Weierstrass model of $E_J$,
\begin{equation*}
F_J:Y^2=X(X+a_J(\gamma_{i_2}-\gamma_{i_1})(\gamma_{i_4}-\gamma_{i_3}))(X+a_J(\gamma_{i_3}-\gamma_{i_1})(\gamma_{i_4}-\gamma_{i_2})),
\end{equation*}
and a birational map $F_J\rightarrow E_J$ given by 
\begin{equation*}
(X,Y)\mapsto \left(\frac{a_J(\gamma_{i_2}-\gamma_{i_1})(\gamma_{i_3}-\gamma_{i_1})(\gamma_{i_4}-\gamma_{i_1})}{X-a_J(\gamma_{i_2}-\gamma_{i_1})(\gamma_{i_3}-\gamma_{i_1})}-\gamma_{i_1},\frac{(\gamma_{i_2}-\gamma_{i_1})(\gamma_{i_3}-\gamma_{i_1})(\gamma_{i_4}-\gamma_{i_1})Y}{(X - a_J(\gamma_{i_2}-\gamma_{i_1})(\gamma_{i_3}-\gamma_{i_1}))^2}\right).
\end{equation*}
We consider three cases, depending on the Mordell-Weil groups of the curves $E_J$.

Case I:  One of the elliptic curves $E_J$ in \eqref{ells} can be proven to have rank $0$ (over $\mathbb{Q}$).  This is the easiest case.  In this case, we easily determine finitely many possibilities for $X$ by computing the finitely many rational torsion points of $E_J$.

Case II:  Two distinct curves $E_J$ and $E_{J'}$ in \eqref{ells} can be proven to have rank $1$ (and generators for the Mordell-Weil groups can be computed).  Let 
\begin{align*}
\mathcal{X}_J&=\{X(P)\mid P\in E_J(\mathbb{Q})\},\\
\mathcal{X}_{J'}&=\{X(P)\mid P\in E_{J'}(\mathbb{Q})\},\\
\mathcal{X}&=\mathcal{X}_J\cap\mathcal{X}_{J'}.
\end{align*}
We want to determine the set of points $X\in \mathcal{X}$ with $h(X)<10^{14}$.  Let $P_J$ and $P_{J'}$ be generators, modulo torsion, for $E_J(\mathbb{Q})$ and $E_{J'}(\mathbb{Q})$, respectively.  We find primes $p$ such that $P_J$ has small order modulo $p$ but $P_{J'}$ has relatively large order modulo $p$.  This is easily done by (in a Weierstrass model) factoring the denominators of the coordinates of $mP_{J}$ for small $m$.  Since $P_J$ has small order modulo $p$, the elements in $\mathcal{X}_J$ are restricted to a small number of congruence classes modulo $p$.  If $\opo_p(P_{J'})$ denotes the order of $P_{J'}$ modulo $p$, then we find, by looking modulo $p$, that for any torsion point $T\in E_{J'}(\mathbb{Q})$, $X(nP_{J'}+T)\in \mathcal{X}_J$ implies that $n$ lies in a small number of congruence class modulo $\opo_p(P_{J'})$.  Using the theory of canonical heights on elliptic curves, one can explicitly compute a positive integer $N$ such that $h(X(nP_{J'}+T))>10^{14}$ for any torsion point $T$ and any $n$ with $|n|\geq N$.  Now we choose primes $p_1,\ldots,p_m$ as above until we have $\LCM(\opo_{p_1}(P_{J'}),\ldots,\opo_{p_m}(P_{J'}))>N$, where $\LCM$ denotes the least common multiple.  Combining the information from each prime $p_i$, we find that we only need to check if $X(nP_{J'}+T)\in \mathcal{X}_J$, $T\in E_{J'}(\mathbb{Q})_{\tors}$, for a small number of integers $n$ with $|n|<N$.  For those integers $n$ we need to check, since for large integers $n$, computing $nP_{J'}$ is impractical, we work again modulo primes $p$, checking whether $X(nP_{J'}+T) \mod p$ is the $X$-coordinate of a point in $E_J(\mathbb{F}_p)$.  In practice, this process is very efficient, typically taking only a few seconds on a modern computer to compute the points $X\in \mathcal{X}$ with $h(X)<10^{14}$, for any two given curves $E_J$ and $E_{J'}$ with given generators $P_J$ and $P_{J'}$.

Nearly all of the possibilities encountered in Theorem \ref{mainr} are covered by Case I and Case II.  However, there are a small number of instances which do not fit into these cases.  For example, if $(\gamma_{i_1},\ldots,\gamma_{i_6})=(0, 1, 2, 10, 13, 14)$ and $(a_{i_1},\ldots,a_{i_6})=(1, 15, 14, 6, 3, 2)$, then each of the $15$ curves in \eqref{ells} has rank at least $2$.  All of these remaining exceptional cases are handled in Case III.

Case III:  There exists a curve $E_J$ in \eqref{ells} which can be proven to have rank $2$, with computable generators $P_1$ and $P_2$, modulo torsion, and sets of primes $\mathcal{Q}_1,\ldots,\mathcal{Q}_t$ such that for any $i$, $|\mathcal{Q}_i|\geq 2$, $(\opo_q(P_1),\opo_q(P_2))$ is the same for all $q\in \mathcal{Q}_i$ (denote the common value by $(\opo_{\mathcal{Q}_i}(P_1),\opo_{\mathcal{Q}_i}(P_2))$), and 
\begin{equation}
\min\{\LCM(\opo_{\mathcal{Q}_1}(P_1),\ldots,\opo_{\mathcal{Q}_t}(P_1)),\LCM(\opo_{\mathcal{Q}_1}(P_2),\ldots,\opo_{\mathcal{Q}_t}(P_2))\}
\end{equation}
is sufficiently large.  In practice, in every case we were able to choose $t=2$ and $|\mathcal{Q}_1|,|\mathcal{Q}_2|\geq 3$.  Similar to before, we now use congruence conditions to restrict the linear combinations of $P_1$ and $P_2$ which must be examined.  For a point $P\in E_J(\mathbb{Q})$ and a prime $q$, let $P_q$ denote the image of $P$ in $E_J(\mathbb{F}_q)$.  Let $i\in \{1,\ldots, t\}$.  For $q\in \mathcal{Q}_i$ and $T\in E_J(\mathbb{Q})_{\tors}$, we compute the set
\begin{equation*}
I_{q,T}=\left\{(m,n)\in \mathbb{F}_{\opo_{\mathcal{Q}_i}(P_1)}\times \mathbb{F}_{\opo_{\mathcal{Q}_i}(P_2)}\mid X(mP_{1q}+nP_{2q}+T_q)\in\bigcap_{\substack{J'\subset\{1,\ldots,6\}\\|J'|=4}}X(E_{J'}(\mathbb{F}_q))\right\}.
\end{equation*}
Then we compute the set  $I_{\mathcal{Q}_i,T}=\cap_{q\in \mathcal{Q}_i}I_{q,T}$.  It follows that for any torsion point $T$ on $E_J$, we need only look at points $mP_1+nP_2+T$ on $E_J$ such that $(m \mod \opo_{\mathcal{Q}_i}(P_1),n \mod \opo_{\mathcal{Q}_i}(P_2))\in I_{\mathcal{Q}_i,T}$.  Using canonical heights, we compute positive integers $M$ and $N$ such that $h(X(mP_1+nP_2+T))>10^{14}$, if $|m|>M$,  $|n|>N$, $T\in E_J(\mathbb{Q})_{\tors}$.  Finally, we piece together the congruence information from the sets $I_{\mathcal{Q}_i,T}$ to determine a relatively small number of integers $m$ and $n$ for which we look at the points $X(mP_1+nP_2+T)$ to determine if they give a solution to \eqref{p2}.

This completes a rough description of the computation performed to prove Theorem \ref{mainr}.  The necessary Mordell-Weil groups were computed using Cremona's mwrank program (through Sage \cite{sage}) while the other computations were done using PARI/GP \cite{PARI2}.
\bibliography{Runge}

\providecommand{\bysame}{\leavevmode\hbox to3em{\hrulefill}\thinspace}
\providecommand{\MR}{\relax\ifhmode\unskip\space\fi MR }
\providecommand{\MRhref}[2]{%
  \href{http://www.ams.org/mathscinet-getitem?mr=#1}{#2}
}
\providecommand{\href}[2]{#2}
\begin{thebibliography}{10}

\bibitem{Bak}
A.~Baker, \emph{Transcendental number theory}, second ed., Cambridge
  Mathematical Library, Cambridge University Press, Cambridge, 1990.

\bibitem{BBGH}
M.~A. Bennett, N.~Bruin, K.~Gy{\H{o}}ry, and L.~Hajdu, \emph{Powers from
  products of consecutive terms in arithmetic progression}, Proc. London Math.
  Soc. (3) \textbf{92} (2006), no.~2, 273--306.

\bibitem{Bom}
E.~Bombieri, \emph{On {W}eil's ``th\'eor\`eme de d\'ecomposition''}, Amer. J.
  Math. \textbf{105} (1983), no.~2, 295--308.

\bibitem{BG}
E.~Bombieri and W.~Gubler, \emph{Heights in {D}iophantine geometry}, New
  Mathematical Monographs, vol.~4, Cambridge University Press, Cambridge, 2006.

\bibitem{Col}
R.~F. Coleman, \emph{Effective {C}habauty}, Duke Math. J. \textbf{52} (1985),
  no.~3, 765--770.

\bibitem{Erd}
P.~Erd{\H{o}}s and J.~L. Selfridge, \emph{The product of consecutive integers
  is never a power}, Illinois J. Math. \textbf{19} (1975), 292--301.

\bibitem{Hil}
D.~L. Hilliker and E.~G. Straus, \emph{Determination of bounds for the
  solutions to those binary {D}iophantine equations that satisfy the hypotheses
  of {R}unge's theorem}, Trans. Amer. Math. Soc. \textbf{280} (1983), no.~2,
  637--657.

\bibitem{SH}
M.~Hindry and J.~H. Silverman, \emph{Diophantine geometry}, Graduate Texts in
  Mathematics, vol. 201, Springer-Verlag, New York, 2000.

\bibitem{HLSS}
N.~Hirata-Kohno, S.~Laishram, T.~N. Shorey, and R.~Tijdeman, \emph{An extension
  of a theorem of {E}uler}, Acta Arith. \textbf{129} (2007), no.~1, 71--102.

\bibitem{SS3}
S.~Laishram and T.~N. Shorey, \emph{Squares in products in arithmetic
  progression with at most two terms omitted and common difference a prime
  power}, Acta Arith. (to appear).

\bibitem{LST}
S.~Laishram, T.~N. Shorey, and S.~Tengely, \emph{Squares in products in
  arithmetic progression with at most one term omitted and common difference a
  prime power}, (to appear).

\bibitem{Lev}
A.~Levin, \emph{Ideal class groups and torsion in {P}icard groups of
  varieties}, submitted.

\bibitem{Lev2}
\bysame, \emph{Ideal class groups, {H}ilbert's irreducibility theorem, and
  integral points of bounded degree on curves}, J. Th\'eor. Nombres Bordeaux
  \textbf{19} (2007), no.~2, 485--499.

\bibitem{MW}
D.~W. Masser and G.~W{\"u}stholz, \emph{Fields of large transcendence degree
  generated by values of elliptic functions}, Invent. Math. \textbf{72} (1983),
  no.~3, 407--464.

\bibitem{MS2}
A.~Mukhopadhyay and T.~N. Shorey, \emph{Almost squares in arithmetic
  progression. {II}}, Acta Arith. \textbf{110} (2003), no.~1, 1--14.

\bibitem{MS3}
\bysame, \emph{Almost squares in arithmetic progression. {III}}, Indag. Math.
  (N.S.) \textbf{15} (2004), no.~4, 523--533.

\bibitem{MS4}
\bysame, \emph{Square free part of products of consecutive integers}, Publ.
  Math. Debrecen \textbf{64} (2004), no.~1-2, 79--99.

\bibitem{Obl}
R.~Obl{\'a}th, \emph{\"{U}ber das {P}rodukt f\"unf aufeinander folgender
  {Z}ahlen in einer arithmetischen {R}eihe}, Publ. Math. Debrecen \textbf{1}
  (1950), 222--226.

\bibitem{Run}
C.~Runge, \emph{{\"U}ber ganzzahlige {L}\"osungen von {G}leichungen zwischen
  zwei {V}er\"anderlichen}, J. Reine Angew. Math. \textbf{100} (1887),
  425--435.

\bibitem{SS}
N.~Saradha and T.~N. Shorey, \emph{Almost squares and factorisations in
  consecutive integers}, Compositio Math. \textbf{138} (2003), no.~1, 113--124.

\bibitem{SS2}
\bysame, \emph{Almost squares in arithmetic progression}, Compositio Math.
  \textbf{138} (2003), no.~1, 73--111.

\bibitem{Sch}
A.~Schinzel and W.~Sierpi{\'n}ski, \emph{Sur certaines hypoth\`eses concernant
  les nombres premiers}, Acta Arith. 4 (1958), 185--208; erratum \textbf{5}
  (1958), 259.

\bibitem{Sho2}
T.~N. Shorey, \emph{Exponential {D}iophantine equations involving products of
  consecutive integers and related equations}, Number theory, Trends Math.,
  Birkh\"auser, Basel, 2000, pp.~463--495.

\bibitem{Sho}
\bysame, \emph{Powers in arithmetic progressions. {III}}, The Riemann zeta
  function and related themes: papers in honour of Professor K. Ramachandra,
  Ramanujan Math. Soc. Lect. Notes Ser., vol.~2, Ramanujan Math. Soc., Mysore,
  2006, pp.~131--140.

\bibitem{Sho1}
T.~N. Shorey and R.~Tijdeman, \emph{Some methods of {E}rd{\H o}s applied to
  finite arithmetic progressions}, The mathematics of Paul Erd\H os, I,
  Algorithms Combin., vol.~13, Springer, Berlin, 1997, pp.~251--267.

\bibitem{Spr}
V.~G. Sprind{\v{z}}uk, \emph{Reducibility of polynomials and rational points on
  algebraic curves}, Dokl. Akad. Nauk SSSR \textbf{250} (1980), no.~6,
  1327--1330.

\bibitem{sage}
W.~Stein, \emph{{Sage}: {O}pen {S}ource {M}athematical {S}oftware ({V}ersion
  2.10.2)}, The Sage~Group, 2008, {\tt http://www.sagemath.org}.

\bibitem{Ten}
S.~Tengely, \emph{Note on a paper ``{A}n extension of a theorem of {E}uler" by
  {H}irata-{K}ohno et al.}, arXiv:0707.0596v1 [math.NT].

\bibitem{PARI2}
{The PARI~Group}, Bordeaux, \emph{{PARI/GP, version {\tt 2.3.3}}}, 2005,
  available from \url{http://pari.math.u-bordeaux.fr/}.

\bibitem{Wal}
P.~G. Walsh, \emph{A quantitative version of {R}unge's theorem on {D}iophantine
  equations}, Acta Arith. \textbf{62} (1992), no.~2, 157--172.

\end{thebibliography}
\end{document}